\documentclass[12pt,a4paper]{amsart}
\usepackage[a4paper,inner=2.8cm,outer=2.5cm,top=2.8cm,bottom=2.5cm]{geometry} 
\usepackage{graphicx,caption,subcaption}              
\usepackage{amsmath,amssymb,amscd,amsthm,amsfonts,anyfontsize,color,dsfont,enumerate,fix-cm,layout,lipsum,lpic,mathrsfs,mdwlist,pigpen,stmaryrd,tensor,tikz,thmtools,xspace}
\usetikzlibrary{arrows,shapes,automata,backgrounds,petri}
\usepackage{hyperref}
\usepackage[all]{xy}
\usepackage{etoolbox}
\apptocmd{\sloppy}{\hbadness 10000\relax}{}{}
\usepackage{mathtools}
\mathtoolsset{showonlyrefs}

\usepackage{xfrac}
\theoremstyle{plain}                 
\newtheorem{theorem}{Theorem}[section]     
\newtheorem{proposition}[theorem]{Proposition} 
\newtheorem{corollary}[theorem]{Corollary}     
\newtheorem{lemma}[theorem]{Lemma}        
\theoremstyle{definition}

\theoremstyle{remark}       
\newtheorem{remark}[theorem]{Remark}    
     
\raggedbottom
\hypersetup{colorlinks=true,linkcolor=blue,citecolor=purple,filecolor=magenta,urlcolor=cyan}
\captionsetup*[figure]{format=plain,font=normalsize,labelsep=period
}
\captionsetup*[subfigure]{format=plain,font=normalsize,labelformat=simple,labelsep=period}
\renewcommand{\thesubfigure}{\textsc{\roman{subfigure}}}

\hyphenation{CohFT or-bi-fold}

\newcommand{\mc}[1]{\mathcal{#1}}

\newcommand{\Rel}{\mathord{\mathrm{Rel}}}

\newcommand{\sta}{\mathcal}
\newcommand{\MMM}{\sta M}

\def\Z{\mathbb{Z}}

\def\oM{\overline{\mathcal{M}}}

\def\CP1{\mathbb{C}\mathrm{P}^1}

\def\rsmall{6mm}
\def\rbig{10mm}

\begin{document}
\title[Tautological ring via PPZ relations]{The tautological ring of $\mathcal{M}_{g,n}$ via Pandharipande-Pixton-Zvonkine $r$-spin relations}

\author[R.~Kramer]{Reinier Kramer}

\address[Reinier Kramer]{Korteweg-de Vries Institute for Mathematics, University of Amsterdam, Postbus 94248, 1090 GE Amsterdam, The Netherlands}

\email{R.Kramer@uva.nl}

\author[F.~Labib]{Farrokh Labib}

\address[Farrokh Labib]{Korteweg-de Vries Institute for Mathematics, University of Amsterdam, Postbus 94248, 1090 GE Amsterdam, The Netherlands}

\email{Farrokh.Labib@gmail.com}

\author[D.~Lewanski]{Danilo Lewanski}

\address[Danilo Lewanski]{Korteweg-de Vries Institute for Mathematics, University of Amsterdam, Postbus 94248, 1090 GE Amsterdam, The Netherlands}

\email{D.Lewanski@uva.nl}

\author[S.~Shadrin]{Sergey Shadrin}

\address[Sergey Shadrin]{Korteweg-de Vries Institute for Mathematics, University of Amsterdam, Postbus 94248, 1090 GE Amsterdam, The Netherlands}

\email{S.Shadrin@uva.nl}

\begin{abstract} We use relations in the tautological ring of the moduli spaces $\oM_{g,n}$ derived by Pandharipande, Pixton, and Zvonkine from the Givental formula for the $r$-spin Witten class in order to obtain some restrictions on the dimensions of the tautological rings of the open moduli spaces $\MMM_{g,n}$. In particular, we give a new proof for the result of Looijenga (for $n=1$) and Buryak et al. (for $n\geq 2$) that $\dim R^{g-1}(\MMM_{g,n}) \leq n$. We also give a new proof of the result of Looijenga (for $n=1$) and Ionel (for arbitrary $n\geq 1$) that 
 $R^{i}(\MMM_{g,n}) =0$ for $i\geq g$ and give some estimates for the dimension of $R^{i}(\MMM_{g,n})$ for $i\leq g-2$. 
\end{abstract}

\maketitle

\tableofcontents

\section{Introduction}

The study of the tautological ring $R^*$ of the moduli spaces of curves goes back to the classical papers of Mumford and Faber~\cite{Mumford1983,Faber1999}, see also~\cite{Vakil2003,Pandharipande2002,Zvonkine2012,Tavakol2016}. The tautological ring of the moduli space of curves $\oM_{g,n}$ is additively generated by the so-called dual graphs decorated by $\psi$- and $\kappa$-classes. A dual graph determines a natural stratum in $\oM_{g,n}$, whose vertices correspond to irreducible components of a generic point in the stratum, the leaves correspond to the marked points, and the edges correspond to the nodes. We decorate each vertex with a non-negative integer equal to the geometric genus of the corresponding irreducible component. Each vertex is also equipped with a multi-index $\kappa$-class, and each half-edge, including the leaves, is equipped with a power of the $\psi$-class of the cotangent line bundle at the corresponding marked point or the corresponding branch of the node. There are many linear relations between these generators called tautological relations.\par
We can restrict all these classes to the open moduli space $\MMM_{g,n}$. Then only the graphs with no edges can contribute non-trivially. These graphs just correspond to the classes $\prod_{i=1}^n \psi_i^{d_i} \kappa_{e_1,\dots,e_k}$, $d_i\geq 0$, $e_i\geq 1$. There are still many relations among these classes that can be proved, in particular, that $R^i(\MMM_{g,n})=0$ for $i\geq g$, see~\cite{Looijenga1995,Ionel2002} and also a recent new proof in~\cite{CladerGrushevskyJandaZakharov2016}. In the case $i=g-1$ one can prove that $\dim R^{g-1}(\MMM_{g,n})\leq n$, see~\cite{Looijenga1995,BuryakShadrinZvonkine2016} for the cases $n=1$ and $n\geq 2$ respectively. In this paper we give new proofs of all these results as well as some restrictions on the dimensions of the tautological rings for $i\leq g-2$. Note that, by the non-degeneracy of some matrix of intersection numbers, one can in fact show that $\dim R^{g-1}(\MMM_{g,n})= n$, we refer to~\cite{BuryakShadrinZvonkine2016} for that.\par
We use the tautological relations of Pandharipande-Pixton-Zvonkine~\cite{PPZ16}. Given\-tal-Teleman theory~\cite{Givental2001MMJ,Teleman2012} provides a formula for a homogeneous semi-simple cohomological field theory as a sum over decorated dual graphs as above, see~\cite{Givental2001IMRN,DSS13,DOSS14,PPZ13}. These formulae can be explained as a result of a certain group action on non-homogeneous cohomological field theories applied to the rescaled Gromov-Witten theory of a finite number of points (also known as topological field theory or degree $0$ cohomological field theory), see~\cite{FSZ10,Teleman2012,Shad08,PPZ13}.\par
In some cases we can obtain this way a graphical formula for a cohomological field theory whose properties we know independently. In particular, the graphical formula might contain classes (linear combinations of decorated dual graphs) that are of dimension higher than the homogeneity property allows for a cohomological field theory. Then theses classes must be equal to zero and give us tautological relations. Alternatively, we might consider the graphical formula as a function of some parameter $\phi$ parametrizing a path on the underlying Frobenius manifold with $\phi=0$ lying on the discriminant. If we know independently that the cohomological field theory is defined for any value of $\phi$, including $\phi=0$, then all negative terms of the Laurent series expansion in $\phi$ near $\phi=0$ also give tautological relations. See~\cite{Jand15,Pand16} for some expositions. Once we have a relation for the decorated dual graphs in $\oM_{g,n+m}$, $m\geq 0$, we can multiply it by an arbitrary tautological class, push it forward to $\oM_{g,n}$, and then restrict it to $\MMM_{g,n}$. This gives a relation among the classes $\prod_{i=1}^n \psi_i^{d_i} \kappa_{e_1,\dotsc,e_k}$, $d_i\geq 0$, $e_i\geq 1$, in $R^*(\MMM_{g,n})$. \par
In the case of the Witten $r$-spin class~\cite{Witten1993,PolischukVaintrob2001} the graphical formula and its ingredients are discussed in detail in~\cite{Givental2003,FSZ10,DNOPS15,PPZ16}.\par
Both approaches mentioned above produce the same systems of tautological relations on $\oM_{g,n}$. Two particular paths on the underlying Frobenius manifold are worked out in detail in~\cite{PPZ16}, and we are using one of them in this paper. Note that the results of Janda~\cite{Jand15,Janda2014,Janda2015} guarantee that these relations work in the Chow ring, see a discussion in~\cite{PPZ16}.

\subsection{Organization of the paper}

In section~\ref{section:PPZrelations} we recall the relations of Pand\-ha\-ri\-pan\-de-Pixton-Zvonkine. In section~\ref{sec:dimRg-1}, we use them to give a new proof of the dimension of \( R^{g-1}(\MMM_{g,n} )\), up to one lemma whose proof takes up section~\ref{sec:non-degmatrix}. In section~\ref{sec:dimRg}, we extend this proof scheme to show the vanishing of the tautological ring in all higher degrees. Finally, in section~\ref{sec:bounddimRlow}, we give some bounds for the dimensions of the tautological rings in lower degrees.

\subsection{Acknowledgments} The work of R.~K., D.~L., and S.~S. was supported by the Netherlands Organization for Scientific Research. S.~S. is  grateful to A.~Buryak and D.~Zvonkine for the numerous fruitful discussions on the tautological ring of $\MMM_{g,n}$ and to G.~Carlet, J.~van de Leur, and H.~Posthuma for the numerous fruitful discussions of the Pandaripande-Pixton-Zvonkine relations. 

\section{Pandharipande-Pixton-Zvonkine relations} \label{section:PPZrelations}

In this section we recall the relations in the tautological ring of $\oM_{g,n}$ from~\cite{PPZ16} and put them in a convenient form for our further analysis. 

\subsection{Definition}\label{subsec:DefinitionRelations}

Fix $r\geq 3$. Fix $n$ primary fields $0\leq a_1,\dots,a_n\leq r-2$. All constructions below depend on an auxiliary variable $\phi$ and we fix its exponent $d<0$. A tautological relation $T(g,n,r,a_1,\dots,a_n,d)=0$ depends on these choices, and it is obtained as $T = r^{g-1}\sum_{k=0}^\infty \pi^{(k)}_* T_k/k!$, where $T_k$ is the coefficient of $\phi^d$ in the expression in the decorated dual graphs of $\oM_{g,n+k}$ described below, and $\pi^{(k)}\colon \oM_{g,n+k}\to \oM_{g,n}$ is the natural projection. 

Consider the vector space of primary fields with basis $ \{ e_0,\dots,e_{r-2} \}$. In the basis $\tilde e_i:= \phi^{-i/(r-1)}e_i$ we define the scalar product $\eta_{ij}=\langle \tilde e_i, \tilde e_j \rangle := \phi^{-(r-2)/(r-1)}\delta_{i+j,r-2}$. Equip each vertex of genus $h$ of valency \( v\) in a decorated dual graph with a tensor 
\[
\tilde e_{a_1}\otimes \cdots \otimes \tilde e_{a_v} \mapsto \phi^{(h-1)(r-2)/(r-1)}(r-1)^h \delta_{(r-1) | h-1-\sum_{i=1}^v a_i}.
\]
Define matrices $(R^{-1}_m)^b_a$, $m\geq 0$, $a,b=0,\dots,r-2$, in the basis $\tilde e_0,\dots,\tilde e_{r-2}$. We set $(R^{-1}_m)^b_a=0$ if $b\not\equiv a-m \mod r-1$. If $b\equiv a-m \mod r-1$, then $(R^{-1}_m)^b_a=(r(r-1)\phi^{r/(r-1)})^{-m} P_m(r,a)$, where $P_m(r,a)$, $m\geq 0$, are the polynomials of degree $2m$ in $r,a$ uniquely determined by the following conditions:
\begin{align}
P_0(r,a) & =1;\\
P_m(r,a)-P_{m}(r,a-1) & =((m-\textstyle{\frac 12})r-a)P_{m-1}(r,a-1); \\
P_m(r,0) & =P_m(r,r-2). 
\end{align}
Equip the first $n$ leaves with $\sum_{m=0}^\infty (R^{-1}_m)_{a_i}^b \psi^m e_b$, $i=1,\dots,n$. Equip the $k$ extra leaves (the dilaton leaves) with $-\sum_{m=1}^\infty (R^{-1}_m)_{0}^b \psi^{m+1} e_b$, $i=1,\dots,n$. Equip each edge, where we denote by $\psi'$ and $\psi''$ the $\psi$-classes on the two branches of the corresponding node, with 
\[
\frac{\eta^{i'i''}-\sum_{m',m''=1}^\infty (R^{-1}_{m'})_{j'}^{i'} \eta^{j'j''} (R^{-1}_{m''})_{j''}^{i''} (\psi')^{m'}(\psi'')^{m''} }{\psi'+\psi''} e_{i'}\otimes e_{i''}
\]
Then $T_k$ is defined as the sum over all decorated dual graphs obtained by the contraction of all tensors assigned to their vertices, leaves, and edges, further divided by the order of the automorphism group of the graph.

\subsection{Analysis of relations} \label{subsection:analysisrelations}
There are several observations about the formula introduced in the previous subsection. 
\begin{enumerate}
	\item We obtain a decorated dual graph in $R^D(\oM_{g,n})$ if and only if the sum of the indices of the matrices $R^{-1}_m$ used in its construction is equal to $D$. 
	\item According to~\cite[Theorem 7]{PPZ16}, $T(g,n,r,a_1,\dots,a_n,d)$ is a sum of decorated dual graphs whose coefficients are polynomials in $r,a_1,\dots,a_n$. 
	\item Let $A=\sum_{i=1}^n a_i$. Then $A\equiv g-1+D \mod r-1$. We can assume that $A=g-1+D+x(r-1)$, $x\geq 0$, since $D$ is bounded by $\dim\oM_{g,n}=3g-3+n$, whereas the relations hold for $r$ arbitrarily big.
Collecting the powers of $\phi$ from the contributions above, we obtain $d(r-1)=A+(g-1)(r-2)-rD$. Substituting the expression for $A$, we have that $d < 0$ if and only if $D \geq g + x$. The relevant cases in this paper are the cases $x=0$ and $x=1$.
\end{enumerate}

These relations, valid for particular $r\geq 3$ and $0\leq a_1,\dots,a_n\leq r-2$ are difficult to apply since we have almost no control on the $\kappa$-classes coming from the dilaton leaves. We solve this problem in the following way. 

Let $x=0$, consider the degree $D=g$. We have relations with polynomial coefficients for all $r$ much greater than $g$ and $A=2g-1$.  More precisely, for all integers $0\leq a_1,\dots,a_n\leq 2g-1$, $\sum_{i=1}^n a_i = 2g-1$, we have a relation whose coefficients are polynomials of degree $2g$ in $r$. In other words, we have a polynomial in $r$ whose coefficients are linear combinations of decorated dual graphs in degree $g$, and we can substitute any $r$ sufficiently large. Possible integer values of $r$ determine this polynomial completely, so its evaluation at any other complex value of $r$ is again a relation. 

Let $x=1$, consider the degree $D=g+1$. We have relations with polynomial coefficients for all $r$ much bigger than $g$ and $A=2g-1+r$.  More precisely, for all integers $0\leq a_1,\dots,a_n\leq r-2$, $\sum_{i=1}^n a_i = 2g-1+r$, we have a relation whose coefficients are polynomials of degree $2g+2$ in $r$. 

Note that in both cases we do not, in general, have polynomiality in $a_1,\dots,a_n$, but we have it for some special decorated dual graphs, under some extra conditions. 

We argue below that a good choice of $r$ in both cases is $r=\frac 12$ (note that we still have to explain what we mean in the case $x=1$, since the sum $A$ depends on $r$). In particular, this choice kills all dilaton leaves, and the only non-trivial term that contributes to the sum over $k$ in the definition of $T(g,n,r,a_1,\dots,a_n,d)$ in these cases is $T_0$. 

\subsection{\texorpdfstring{$P$}{R}-polynomials at \texorpdfstring{$r=\frac 12$}{r=1/2}}



Recall the \( P_m(r,a)\)-polynomials of \cite{PPZ16} introduced above, and define
\begin{equation}
Q_m(a) \coloneqq \frac{(-1)^m}{2^m m!} \prod_{k=1}^{2m} \Big( a+ 1 - \frac{k}{2} \Big)
\end{equation}

\begin{lemma}
	We have \( P_m\big( \frac{1}{2}, a\big) = Q_m(a)\).
\end{lemma}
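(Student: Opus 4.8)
The plan is to proceed by induction on $m$, checking that $Q_m$ satisfies the conditions characterising $P_m(r,a)$ after setting $r=\tfrac12$. The case $m=0$ is immediate, since the empty product gives $Q_0(a)=1=P_0(\tfrac12,a)$. For the inductive step I would assume $P_{m-1}(\tfrac12,a)=Q_{m-1}(a)$ and verify the $r=\tfrac12$ specialisation of the recursion, namely
\[
Q_m(a)-Q_m(a-1)=\Bigl(\bigl(m-\tfrac12\bigr)\tfrac12-a\Bigr)Q_{m-1}(a-1).
\]
Granting this, the difference $P_m(\tfrac12,a)-Q_m(a)$ is annihilated by the operator $f(a)\mapsto f(a)-f(a-1)$ and is therefore a constant, which must then be pinned down.

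The substance of the argument is the verification of this recursion, which I would carry out by manipulating the explicit product. The $2m$ factors of $Q_m(a)$ form the descending half-integer list $a+\tfrac12,\,a,\,a-\tfrac12,\dots,a+1-m$, while those of $Q_m(a-1)$ are $a-\tfrac12,\,a-1,\dots,a-m$; the two lists share the block $C\coloneqq\prod_{k=1}^{2m-2}\bigl(a-\tfrac{k}{2}\bigr)$, which is precisely the product appearing in $Q_{m-1}(a-1)$. Factoring $C$ out of both terms reduces the left-hand side to $\tfrac{(-1)^m}{2^m m!}\,C\bigl[(a+\tfrac12)a-(a-m+\tfrac12)(a-m)\bigr]$, and the bracket collapses to $m\bigl(2a-m+\tfrac12\bigr)$. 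Matching the leftover constants against $\bigl((m-\tfrac12)\tfrac12-a\bigr)Q_{m-1}(a-1)$ then gives the identity. I expect this bookkeeping to be the main obstacle: one must align the shifted half-integer factors without an off-by-one error, since the telescoping hinges on $C$ being exactly the $Q_{m-1}(a-1)$-block and on the two uncancelled quadratic factors combining to the simple linear expression above.

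It remains to fix the additive constant, and this is where I would be most careful. Since $P_m(\tfrac12,\cdot)$ and $Q_m$ obey the same difference equation, they agree in every coefficient of positive degree in $a$, so it suffices to compare them at a single point. I would anchor this comparison through the remaining (boundary/normalisation) condition in the definition of $P_m$; note that for $m\geq 1$ the factor with $k=2$ in $Q_m$ equals $a$, so $Q_m(0)=0$, in line with the remark in Subsection~\ref{subsection:analysisrelations} that the choice $r=\tfrac12$ kills the dilaton leaves. The delicate point is to invoke the third condition $P_m(r,0)=P_m(r,r-2)$ in the correct, integer-$r$ sense that actually fixes the normalisation of $P_m$, rather than in a naive specialisation at $r=\tfrac12$, so that the constant is forced to vanish and the induction closes.
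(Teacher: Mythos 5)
Your verification of the specialized difference equation is correct and is in fact the same telescoping computation as in the paper: factoring out the common block \(C=\prod_{k=1}^{2m-2}\bigl(a-\tfrac{k}{2}\bigr)=\) (the product in \(Q_{m-1}(a-1)\), up to prefactor) and collapsing the two leftover quadratics to \(m\bigl(2a-m+\tfrac12\bigr)\) gives exactly \(Q_m(a)-Q_m(a-1)=\tfrac12\bigl(m-\tfrac12-2a\bigr)Q_{m-1}(a-1)\), matching the \(r=\tfrac12\) recursion. Up to the conclusion that \(\Delta_m(a)\coloneqq P_m\bigl(\tfrac12,a\bigr)-Q_m(a)\) is a constant \(c\), your argument coincides with the paper's.

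The genuine gap is the final step: you never actually pin down \(c\), and the device you gesture at cannot do it as stated. You are right that the naive specialization fails, but note \emph{why} it fails: at level \(m\) the discrepancy \(\Delta_m\) is a constant, so the specialized boundary condition only yields \(\Delta_m(0)=\Delta_m(-\tfrac12)\), which is vacuously true for \emph{every} constant; likewise \(Q_m(0)=0\) is useless unless you independently know \(P_m\bigl(\tfrac12,0\bigr)=0\), which is not one of the defining conditions. Your proposed remedy --- invoking the third condition ``in the correct, integer-\(r\) sense'' --- is not an argument: PPZ's uniqueness is a statement about the two-variable polynomial \(P_m(r,a)\), and to exploit it you would need a two-variable extension of \(Q_m\) satisfying all the conditions, which you do not construct (constructing one is essentially equivalent to the lemma itself). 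The paper's resolution is a concrete trick absent from your proposal: granting \(\Delta_m\equiv c\), pass to level \(m+1\), where the difference equation forces \(\Delta_{m+1}\) to be a \emph{quadratic} polynomial whose non-constant coefficients are proportional to \(c\); the evaluation condition \(\Delta_{m+1}(-\tfrac12)=\Delta_{m+1}(0)\) (non-vacuous now, since both \(-\tfrac12\) and \(0\) are roots of \(Q_{m+1}\) and the boundary condition for \(P_{m+1}\) specializes to this equality) then yields \(\tfrac{1-m}{4}c=0\), so \(c=0\) whenever \(m>1\). This also explains why the paper requires \emph{two} explicit base cases, \(m=0\) and \(m=1\), checked against the explicit formulas in PPZ --- the level-up argument degenerates at \(m=1\) --- whereas your induction starts only from \(m=0\) and would therefore not close even with the trick supplied.
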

\begin{proof}
We will use \cite[lemma 4.3]{PPZ16}. It is clear that \( Q_0(a) = 1\) and are \( Q_m(0) = Q_m\big( -\frac{1}{2}\big) = \delta_{m,0}\). Furthermore
\begin{align}
Q_m(a)- Q_m(a-1) &= \frac{(-1)^m}{2^m m!} \bigg( \prod_{k=1}^{2m} \Big( a+ 1 - \frac{k}{2} \Big) - \prod_{k=1}^{2m} \Big( a - \frac{k}{2} \Big) \bigg)\\
&= \frac{(-1)^m}{2^m m!} \Big( \big( a+\frac{1}{2}\big) a- \big( a-m+\frac{1}{2}\big) (a-m) \Big) \prod_{k=1}^{2m-2} \Big( a - \frac{k}{2} \Big)  \\
&= \frac{1}{2 m} \big(- 2am + m^2 -\frac{1}{2}m\big) Q_{m-1}(a-1)\\
&= \frac{1}{2} \big( m -\frac{1}{2} -2a\big) Q_{m-1}(a-1),
\end{align}
so the equations in the lemma are satisfied.\par
This does not allow us to conclude yet that our \( Q_m(a)\) are equal to the \( P_m(\frac{1}{2}, a)\), as the lemma only states uniqueness for the \( P_m(r,a) \) as polynomials in \( a\) and \( r\). However, we can prove equality by induction on \( m\). The cases for \( m =0,1\) are given explicitly in \cite{PPZ16}, and can be checked easily.\par
Now assume \( m > 1\) and \( P_{m-1} \big(\frac{1}{2}, a\big) = Q_{m-1}(a)\). Then
\begin{equation}
Q_m(a)- Q_m(a-1) = \frac{1}{2} \big( m -\frac{1}{2} -2a\big) Q_{m-1}(a-1),
\end{equation}
with the same relation for \( P_m \big( \frac{1}{2},a\big) \). Hence, \( P_m \big(\frac{1}{2},a\big) = Q_m(a) + c\). Using the same relation for \( m+1\), we get that
\begin{equation}
\Delta_{m+1}(a) \coloneqq P_{m+1}\big( \frac{1}{2},a \big) - Q_{m+1}(a) = -\frac{c}{2}a^2 + \frac{2m-3}{4} ac + d
\end{equation}
We then have that
\begin{align}
0 &= \Delta_{m+1}\big(-\frac{1}{2} \big) - \Delta_{m+1}(0) = -\frac{c}{8} - \frac{2m-3}{8}c = \frac{1-m}{4} c
\end{align}
Because \( m > 1\) by assumption, this proves \( c=0 \), so \( P_m\big(\frac{1}{2},a) = Q_m(a)\).
\end{proof}

\subsection{Simplified relations I}\label{subsec:simprelI}
 In this subsection we discuss the relations that we can obtain from the substitution $r=\frac 12$ for the case of $x=0$ in subsection~\ref{subsection:analysisrelations}. 

The polynomials $Q_m(a)$, $m=0,1,2,\dots$, discussed in the previous subsection, have degree $2m$ and roots $-\frac 12, 0, \frac 12, 1, \dots, m-\frac 32, m-1.$ Note that on the dilaton leaves in the relation of~\cite{PPZ16} we always have a coefficient $(R_m^{-1})_0^i$ for some $m\geq 1$. Since for $r=\frac 12$ we have $(R_m^{-1})_0^i=(-\frac 14 \phi^{-1})^{-m} Q_m(0)=0$, $m\geq 1$, the graphs with dilaton leaves do not contribute to the tautological relations. 

In order to obtain a relation on $\MMM_{g,n}$ we first consider a relation in $\oM_{g,n+m}$ that we push forward to $\oM_{g,n}$ and then restrict to the open moduli space $\MMM_{g,n}$. Note that only graphs that correspond to a partial compactification of $\MMM_{g,n+m}$ can contribute non-trivially. Namely, it is a special case of the rational tails partial compactification, where we require in addition that at most one among the first $n$ marked points can lie on each rational tail. We denote this compactification by $\MMM_{g,n+m}^{\text{rt}[n]}$.

For instance, the dual graphs that can contribute non-trivially to a relation on $\MMM_{g,n+1}^{\text{rt}[n]}$ are either the graph with one vertex and no edges or the graphs with two vertices of genus $g$ and $0$ and one edge connecting them, with leaves labeled by $i$ and $n+1$ attached to the genus $0$ vertex and all other leaves attached to the genus $g$ vertex, $i=1,\dots,n$. These graphs correspond to the divisors in $\MMM_{g,n+1}^{\text{rt}[n]}$ that we denote by $D_{i,n+1}$.

More generally, we denote by $D_I$, $I\subset\{1,\dots,n+m\}$, the divisor in $\oM_{g,n+m}$ whose generic point is represented by a two-component curve, with components of genus $g$ and $0$ connected through a node, such that all the points with labels in $I$ lie on the component of genus $0$, and all other points lie on the component of genus $g$. Then the divisors that belong to $\MMM_{g,n+m}^{\text{rt}[n]}$ are those in which $I$ contains at most one point with a label $1\leq l\leq n$, and all dual graphs that we have to consider are the dual graphs of the generic points of the strata obtained by the intersection of these divisors. 

We denote the relations on \( \oM_{g,n}\) corresponding to the choice of the primary fields $a_1,\dots,a_n$, by $\Omega^{D}_{g,n}(a_1,\dots,a_n)=0$, where $D$ is the degree of the class. In this definition we adjust the coefficient, namely, from now on we ignore the pre-factor $r^{g-1}$ in the definition of the relations, as well as the factor $(-\frac 14 \phi^{-1})^{-D}$ coming from the formula for the $R$-matrices in terms of the polynomials $Q$. Hence, \( \Omega_{g,n}^D(\vec a) \) is proportional to \( T(g,n,\frac12 , \vec a,d(D) )\). We will also often write \( \Omega \) for its restriction to various open parts of the moduli space, such as $\MMM_{g,n+m}^{\text{rt}[n]}$.\par
Note that, as we discussed above, there is a condition on the possible degree of the class and the possible choices of the primary fields implied by the requirement that the degree of the auxiliary parameter $\phi$ must be negative.  

We use the following relations in the rest of the paper:
$\Omega_{g,n+m}^{D}(a_1,\dots,a_{n+m})$, where $D\geq g$, $m\geq 0$, and $\sum_{i=1}^{n+m} a_i = g-1+D$ and all primary fields must be non-negative integers.
We sometimes first multiply these relations by extra monomials of $\psi$-classes before we apply the pushforward to $\oM_{g,n}$ and/or restriction to $\MMM_{g,n}$.

\subsection{Simplified relations II} In this subsection we discuss the relations that we can obtain from the substitution $r=\frac 12$ for the case of $x=1$ in subsection~\ref{subsection:analysisrelations}.

Let us first list all the dual graphs representing the strata in $\MMM_{g,n+2}^{\text{rt}[n]}$, see figure~\ref{fig:chefigura}. Note that under an extra condition on the primary fields $a_1,\dots,a_{n+2}$, namely, that $1\leq a_i\leq r-3-a_{n+1}-a_{n+2}$ for any $1\leq i\leq n$, the coefficients of all these graphs in \( T(g,n+2,r, \vec a,-1 )\), equipped in an arbitrary way with $\psi$- and $\kappa$-classes,  are manifestly polynomial in $a_1,\dots,a_{n+2},r$. Indeed, this extra inequality guarantees that we can uniquely determine the primary fields on the edges in the Givental formula for all these nine graphs.

\begin{figure}[t]
	\begin{subfigure}{0.3\textwidth}
		\renewcommand{\thesubfigure}{\textsc{i}}
		\caption{The entire space}
		\begin{tikzpicture}\label{I}
		\node[draw,circle,minimum size=\rbig] (g) at (0,0) {\( g \)} ;
		\node[] (n+1) at (2,-0.5) {\( a_{n+1}\)};
		\node[] (n+2) at (2,0.5) {\( a_{n+2} \)};
		\node[] (1) at (-0.6,-1.5) {\( a_1 \)};
		\node[] (n) at (0.6,-1.5) {\( a_n\)};
		\node[] (dots) at (0,-1.5) {\( \dotsb \)};
		\path (g) edge (n+1);
		\path (g) edge (n+2);
		\path (g) edge (1);
		\path (g) edge (n);
		\end{tikzpicture}
	\end{subfigure}
	~
	\begin{subfigure}{0.3\textwidth}
		\renewcommand{\thesubfigure}{\textsc{iv}}
		\caption{\( D_{n+1,n+2} \)}
		\begin{tikzpicture}\label{IV}
		\node[draw,circle,minimum size=\rbig] (g) at (0,0) {\( g \)} ;
		\node[draw,circle,minimum size=\rsmall] (b1) at (1.5,0) {\( 0\)};
		\node[] (1) at (-0.6,-1.5) {\( a_1 \)};
		\node[] (n) at (0.6,-1.5) {\( a_n\)};
		\node[] (dots) at (0,-1.5) {\( \dotsb \)};
		\node[] (n+1) at (3,-0.5) {\( a_{n+1}\)};
		\node[] (n+2) at (3,0.5) {\( a_{n+2} \)};
		\path (g) edge (b1);
		\path (g) edge (1);
		\path (g) edge (n);
		\path (b1) edge (n+1);
		\path (b1) edge (n+2);
		\end{tikzpicture}
	\end{subfigure}
	~
	\begin{subfigure}{0.38\textwidth}
		\renewcommand{\thesubfigure}{\textsc{vii}}
		\caption{\( D_{j,n+1,n+2} D_{n+1,n+2} \)}
		\begin{tikzpicture}\label{VII}
		\node[draw,circle,minimum size=\rbig] (g) at (0,0) {\( g \)} ;
		\node[draw,circle,minimum size=\rsmall] (b1) at (1.5,-0.5) {\( 0\)};
		\node[draw,circle,minimum size=\rsmall] (b2) at (3,0) {\( 0\)};
		\node[] (1) at (-0.6,-1.5) {\( a_1 \)};
		\node[] (n) at (0.6,-1.5) {\( a_n\)};
		\node[] (dots) at (0,-1.5) {\( \dotsb \)};
		\node[] (j) at (2.8,-1) {\( a_{j}\)};
		\node[] (n+1) at (4.5,-0.5) {\( a_{n+1}\)};
		\node[] (n+2) at (4.5,0.5) {\( a_{n+2} \)};
		\path (g) edge (b1);
		\path (g) edge (1);
		\path (g) edge (n);
		\path (b2) edge (n+2);
		\path (b1) edge (j);
		\path (b2) edge (n+1);
		\path (b1) edge (b2);
		\end{tikzpicture}
	\end{subfigure}
	\\
	\vspace{12pt}
	\begin{subfigure}{0.3\textwidth}
		\renewcommand{\thesubfigure}{\textsc{ii}}
		\caption{\( D_{j,n+2} \)}
		\begin{tikzpicture}\label{II}
		\node[draw,circle,minimum size=\rbig] (g) at (0,0) {\( g \)} ;
		\node[draw,circle,minimum size=\rsmall] (b1) at (1.5,-0.5) {\( 0\)};
		\node[] (1) at (-0.6,-1.5) {\( a_1 \)};
		\node[] (n) at (0.6,-1.5) {\( a_n\)};
		\node[] (dots) at (0,-1.5) {\( \dotsb \)};
		\node[] (j) at (2.8,-1) {\( a_j\)};
		\node[] (n+2) at (3,0) {\( a_{n+2}\)};
		\node[] (n+1) at (1.5,1) {\( a_{n+1} \)};
		\path (g) edge (b1);
		\path (g) edge (1);
		\path (g) edge (n);
		\path (b1) edge (n+2);
		\path (b1) edge (j);
		\path (g) edge (n+1);
		\end{tikzpicture}
	\end{subfigure}
	~
	\begin{subfigure}{0.3\textwidth}
		\renewcommand{\thesubfigure}{\textsc{v}}
		\caption{\( D_{j,n+1}D_{k,n+2} \)}
		\begin{tikzpicture}\label{V}
		\node[draw,circle,minimum size=\rbig] (g) at (0,0) {\( g \)} ;
		\node[draw,circle,minimum size=\rsmall] (b1) at (1.5,-0.7) {\( 0\)};
		\node[draw,circle,minimum size=\rsmall] (b2) at (1.5,0.7) {\( 0\)};
		\node[] (1) at (-0.6,-1.5) {\( a_1 \)};
		\node[] (n) at (0.6,-1.5) {\( a_n\)};
		\node[] (dots) at (0,-1.5) {\( \dotsb \)};
		\node[] (j) at (2.8,-1.1) {\( a_j\)};
		\node[] (n+1) at (3,-0.4) {\( a_{n+1}\)};
		\node[] (n+2) at (3,1.1) {\( a_{n+2}\)};
		\node[] (k) at (2.8,0.4) {\( a_k \)};
		\path (g) edge (b1);
		\path (g) edge (1);
		\path (g) edge (n);
		\path (b2) edge (n+2);
		\path (b2) edge (k);
		\path (g) edge (b2);
		\path (b1) edge (n+1);
		\path (b1) edge (j);
		\end{tikzpicture}
	\end{subfigure}
	~
	\begin{subfigure}{0.38\textwidth}
		\renewcommand{\thesubfigure}{\textsc{viii}}
		\caption{\( D_{j,n+1,n+2} D_{j,n+2} \)}
		\begin{tikzpicture}\label{VIII}
		\node[draw,circle,minimum size=\rbig] (g) at (0,0) {\( g \)} ;
		\node[draw,circle,minimum size=\rsmall] (b1) at (1.5,0.5) {\( 0\)};
		\node[draw,circle,minimum size=\rsmall] (b2) at (3,0) {\( 0\)};
		\node[] (1) at (-0.6,-1.5) {\( a_1 \)};
		\node[] (n) at (0.6,-1.5) {\( a_n\)};
		\node[] (dots) at (0,-1.5) {\( \dotsb \)};
		\node[] (n+1) at (3,1) {\( a_{n+1}\)};
		\node[] (j) at (4.5,-0.5) {\( a_j\)};
		\node[] (n+2) at (4.5,0.5) {\( a_{n+2} \)};
		\path (g) edge (b1);
		\path (g) edge (1);
		\path (g) edge (n);
		\path (b2) edge (n+2);
		\path (b1) edge (n+1);
		\path (b2) edge (j);
		\path (b1) edge (b2);
		\end{tikzpicture}
	\end{subfigure}
	\\
	\vspace{12pt}
	\begin{subfigure}{0.3\textwidth}
		\renewcommand{\thesubfigure}{\textsc{iii}}
		\caption{\( D_{j,n+1} \)}
		\begin{tikzpicture}\label{III}
		\node[draw,circle,minimum size=\rbig] (g) at (0,0) {\( g \)} ;
		\node[draw,circle,minimum size=\rsmall] (b1) at (1.5,-0.5) {\( 0\)};
		\node[] (1) at (-0.6,-1.5) {\( a_1 \)};
		\node[] (n) at (0.6,-1.5) {\( a_n\)};
		\node[] (dots) at (0,-1.5) {\( \dotsb \)};
		\node[] (j) at (2.8,-1) {\( a_j\)};
		\node[] (n+1) at (3,0) {\( a_{n+1}\)};
		\node[] (n+2) at (1.5,1) {\( a_{n+2} \)};
		\path (g) edge (b1);
		\path (g) edge (1);
		\path (g) edge (n);
		\path (b1) edge (n+1);
		\path (b1) edge (j);
		\path (g) edge (n+2);
		\end{tikzpicture}
	\end{subfigure}
	~
	\begin{subfigure}{0.3\textwidth}
		\renewcommand{\thesubfigure}{\textsc{vi}}
		\caption{\( D_{j,n+1,n+2} \)}
		\begin{tikzpicture}\label{VI}
		\node[draw,circle,minimum size=\rbig] (g) at (0,0) {\( g \)} ;
		\node[draw,circle,minimum size=\rsmall] (b1) at (1.5,0) {\( 0\)};
		\node[] (1) at (-0.6,-1.5) {\( a_1 \)};
		\node[] (n) at (0.6,-1.5) {\( a_n\)};
		\node[] (dots) at (0,-1.5) {\( \dotsb \)};
		\node[] (j) at (2.8,-1) {\( a_j\)};
		\node[] (n+1) at (3,0) {\( a_{n+1} \)};
		\node[] (n+2) at (3,1) {\( a_{n+2}\)};
		\path (g) edge (b1);
		\path (g) edge (1);
		\path (g) edge (n);
		\path (b1) edge (n+1);
		\path (b1) edge (n+2);
		\path (b1) edge (j);
		\end{tikzpicture}
	\end{subfigure}
	~
	\begin{subfigure}{0.38\textwidth}
		\renewcommand{\thesubfigure}{\textsc{ix}}
		\caption{\( D_{j,n+1,n+2} D_{j,n+1} \)}
		\begin{tikzpicture}\label{IX}
		\node[draw,circle,minimum size=\rbig] (g) at (0,0) {\( g \)} ;
		\node[draw,circle,minimum size=\rsmall] (b1) at (1.5,0.5) {\( 0\)};
		\node[draw,circle,minimum size=\rsmall] (b2) at (3,0) {\( 0\)};
		\node[] (1) at (-0.6,-1.5) {\( a_1 \)};
		\node[] (n) at (0.6,-1.5) {\( a_n\)};
		\node[] (dots) at (0,-1.5) {\( \dotsb \)};
		\node[] (n+2) at (3,1) {\( a_{n+2}\)};
		\node[] (j) at (4.5,-0.5) {\( a_{j}\)};
		\node[] (n+1) at (4.5,0.5) {\( a_{n+1} \)};
		\path (g) edge (b1);
		\path (g) edge (1);
		\path (g) edge (n);
		\path (b2) edge (j);
		\path (b1) edge (n+2);
		\path (b2) edge (n+1);
		\path (b1) edge (b2);
		\end{tikzpicture}
	\end{subfigure}
	\caption{Strata in \( \MMM_{g,n+2}^{\text{rt}[2]} \)}
	\label{fig:chefigura}
	\renewcommand{\thesubfigure}{\textsc{\roman{subfigure}}}
\end{figure}

Thus, we have a sequence of tautological relations \( T(g,n+2,r, \vec a,-1 )\) in dimension $g+1$ defined for a big enough $r$, and arbitrary non-negative integers $a_1,\dots,a_{n+2}$ satisfying $a_1+\cdots+a_{n+2}=2g+r-1$ and  $1\leq a_i\leq r-3-a_{n+1}-a_{n+2}$ for any $1\leq i\leq n$.
This gives us enough evaluations of the polynomial coefficients of the decorated dual graphs in $\MMM_{g,n+2}^{\text{rt}[n]}$ to determine these polynomials completely. Thus, we can represent the values of these polynomial coefficients at an arbitrary point $(\tilde a_1,\dots,\tilde a_{n+2},\tilde r)\in \mathbb{C}^{n+3}$ as a linear combination of the Pandharipande-Pixton-Zvonkine relations. This representation is non-unique, since we have too many admissible points $(a_1,\dots,a_{n+2}, r)\in\mathbb{Z}^{n+3}$ satisfying the conditions above. This non-uniqueness is not important for the coefficients of the decorated dual graphs in $\MMM_{g,n+2}^{\text{rt}[n]}$, since we always get the values of their polynomial coefficients at the prescribed points, but the extension of different linear combinations of the relations to the full compactification $\oM_{g,n+2}$ can be different. Indeed, the coefficients of the graphs not listed in figure~\ref{fig:chefigura} can be non-polynomial in $a_1,\dots,a_{n+2}$ (but they are still polynomial in $r$).

We can choose one possible extension to the full compactification $\oM_{g,n+2}$ for each point $(\tilde a_1,\dots,\tilde a_{n+2},\tilde r)\in \mathbb{C}^{n+3}$. In particular, we always specialize $r=\frac 12$, $a_{n+1}=\frac 32$, $a_{n+2}=-\frac 12$. The choice $r=\frac 12$ guarantees that we have no non-trivial dilaton leaves, that is, we have no $\kappa$-classes in the decorations of our graphs. We also divide the whole relation by the factor $(\frac 12) ^{g-1} (-\frac 14 \phi^{-1})^{1-g}$, as in the previous subsection.  

Abusing the notation, we denote these relations by $\Omega^{g+1}_{g,n+2}(a_1,\dots,a_{n},\frac 32, -\frac 12)$. These relations are defined for arbitrary complex numbers $a_1,\dots,a_n$ satisfying $\sum_{i=1}^{n} a_i = 2g-\frac32$. Of course, it is reasonable to use half-integer or integer primary fields $a_1,\dots,a_n$ that would be the roots of the polynomials $Q$, since this gives us a very good control on the possible degrees of the $\psi$-classes on the leaves and the edges of the dual graphs. 

Let us stress once again that restriction of  $\Omega^{g+1}_{g,n+2}(a_1,\dots,a_{n},\frac 32, -\frac 12)$ to $\MMM_{g,n+2}^{\text{rt}[n]}$ is well-defined and can be obtained by the specialization of the polynomial coefficients of the dual graphs in figure~\ref{fig:chefigura} to the point $(a_1,\dots,a_n,a_{n+1}=\frac 32, a_{n+2}=-\frac 12, r=\frac 12)$. We analyze this polynomial coefficients in the next two sections. In the meanwhile, the extension of  $\Omega^{g+1}_{g,n+2}(a_1,\dots,a_{n},\frac 32, -\frac 12)$ from $\MMM_{g,n+2}^{\text{rt}[n]}$ to $\oM_{g,n+2}$ is, in principle, not unique, and we only use that it exists.

\section{The dimension of \texorpdfstring{\( R^{g-1}(\mathcal{M}_{g,n})\)}{g-1-ring}}\label{sec:dimRg-1}

In this section we give a new proof of a result in~\cite{BuryakShadrinZvonkine2016} that \( \dim R^{g-1}(\MMM_{g,n} ) \leq n \).


\subsection{Reduction to monomials in \texorpdfstring{\( \psi \)}{psi}-classes}\label{subsec:nokappaing-1}
In this subsection we show that any monomial $\psi_1^{d_1}\cdots \psi_n^{d_n}\kappa_{e_1,\dots ,e_m}$ of degree $g-1$ can be expressed as a linear combination of monomials of degree $g-1$ which have only $\psi$-classes. We prove this fact by considering the relations \( \Omega_{g,n+m}^{g-1+m}(a_1, \dotsc, a_{n+m})\) for some appropriate choices of the primary fields. 
\begin{proposition}\label{prop:psispanRg-1}
	Let \( g \geq 2\) and \( n \geq 1\). The ring $R^{g-1}(\mathcal{M}_{g,n})$ is spanned by the monomials $\psi_1^{d_1}\cdots \psi_n^{d_n}$ for $d_1,\dots,d_n\geq 0$, $\sum_{i=1}^nd_i=g-1$.
\end{proposition}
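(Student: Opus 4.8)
The plan is to remove the $\kappa$-classes one at a time by pushing the relations $\Omega_{g,n+m}^{g-1+m}$ forward along the forgetful morphisms $\pi_m\colon\oM_{g,n+m}\to\oM_{g,n}$. I would induct on the number $m$ of indices of the $\kappa$-class appearing in a degree $g-1$ monomial $\psi_1^{d_1}\cdots\psi_n^{d_n}\kappa_{e_1,\dots,e_m}$, the case $m=0$ being the assertion itself. The geometric input is the classical description of $\kappa$-classes as pushforwards of $\psi$-classes: on the open part the pullback of a $\psi$-class under a forgetful map carries no diagonal correction, so the projection formula together with the standard relations between $\kappa$- and $\psi$-classes gives, modulo $\psi$-$\kappa$ monomials with strictly fewer than $m$ indices,
\[
\psi_1^{d_1}\cdots\psi_n^{d_n}\kappa_{e_1,\dots,e_m} \equiv \pm\,\pi_{m*}\big(\psi_1^{d_1}\cdots\psi_n^{d_n}\psi_{n+1}^{e_1+1}\cdots\psi_{n+m}^{e_m+1}\big).
\]
Thus it is enough to understand pushforwards of pure $\psi$-monomials of degree $g-1+m$, and, since $D=g-1+m\geq g$ for $m\geq1$, the relevant relations $\Omega_{g,n+m}^{g-1+m}$ are exactly those made available in Subsection~\ref{subsec:simprelI}.

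Concretely, I would fix primary fields with $\sum_{i=1}^{n+m}a_i=2g-2+m$, take $\Omega_{g,n+m}^{g-1+m}(a_1,\dots,a_{n+m})$ on $\oM_{g,n+m}$, push it forward to $\oM_{g,n}$, and restrict to $\MMM_{g,n}$. Properness of $\pi_m$ makes the pushforward legitimate, and after restriction only the strata of $\MMM_{g,n+m}^{\text{rt}[n]}$ survive. The single-vertex term is the pushforward of a combination of pure $\psi$-monomials and therefore contributes $m$-index $\kappa$-monomials together with lower ones, while every genuine boundary stratum $D_I$ contributes only terms of strictly smaller $\kappa$-complexity, because its rational components carry no $\kappa$-classes and their $\psi$-decorations integrate to numerical factors, so that the surviving classes live on the genus-$g$ component and involve at most $m-1$ forgotten-point clusters. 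Hence, modulo the inductive hypothesis, the pushed-forward relation becomes an identity expressing a linear combination of the $m$-index $\kappa$-monomials of degree $g-1$ in terms of pure $\psi$-monomials.

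To turn this one identity into a reduction of each individual monomial I would exploit the $r=\tfrac12$ structure. The $i$-th leg is decorated by $\sum_{l\geq0}(R^{-1}_l)^{b}_{a_i}\psi^l$, governed by $Q_l(a_i)$, and since the roots of $Q_l$ are $-\tfrac12,0,\tfrac12,\dots,l-1$ one has $Q_l(a_i)=0$ whenever $l>a_i$ for a non-negative integer $a_i$; this bounds the admissible $\psi$-power on each leg and makes the $\kappa$-index vector $(e_1,\dots,e_m)$ produced by the choice $a_{n+j}=e_j+1$ the maximal one that occurs. By observation~(2) of Subsection~\ref{subsection:analysisrelations} the coefficients are polynomial in $a_1,\dots,a_{n+m}$, so varying the primary fields over admissible integer values yields a whole family of such identities, and I would argue that this family triangularizes the $m$-index $\kappa$-monomials against the pure $\psi$-monomials, which closes the induction.

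The main obstacle is precisely this triangularization: one must show that the prescribed monomial occurs with a nonzero coefficient and that the system obtained by varying $(a_1,\dots,a_{n+m})$ is non-degenerate enough to solve for every $m$-index $\kappa$-monomial. This forces one to control the leading behaviour of the $Q_l$-polynomials and of the pushforward combinatorics simultaneously, and it is the genuinely computational heart of the argument; by contrast the reduction to fewer indices and the identification of the boundary contributions as lower-complexity terms are routine consequences of the projection formula.
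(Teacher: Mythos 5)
Your strategy is the same as the paper's: induct on the number $m$ of $\kappa$-indices, push the relations $\Omega^{g-1+m}_{g,n+m}$ (with $\sum a_i = 2g-2+m$) forward to $\MMM_{g,n}$, set the primary fields of the forgotten points to $e_j+1$, use the vanishing $Q_l(a)=0$ for non-negative integers $a<l$ to bound the admissible exponents, and note that the boundary strata of $\MMM^{\text{rt}[n]}_{g,n+m}$ contribute only terms with fewer $\kappa$-indices. However, the proposal stops exactly where the proof must be completed: you flag the triangularization as ``the main obstacle'' and ``the genuinely computational heart of the argument,'' reducing it to an unproven non-degeneracy claim about a system of identities obtained by varying $(a_1,\dots,a_{n+m})$. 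Nothing in the proposal shows that the targeted monomial appears with non-zero coefficient, nor that the resulting system can be solved, so this is a genuine gap.

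The missing observation --- and the reason this step is much easier than you anticipate --- is that no simultaneous system needs to be solved: each monomial gets its own tailored relation which is automatically triangular. For $\psi_1^{d_1}\cdots\psi_n^{d_n}\kappa_{e_1,\dots,e_m}$, take $a_i=d_i$ for $i=2,\dots,n$, $b_j=e_j+1$ for $j=1,\dots,m$, and put all the remaining weight into the first leg, $a_1=d_1+g-1$. The $Q$-vanishing then forces every surviving term to satisfy $d_i'\le d_i$ for $i\ge 2$ and $e_j'\le e_j$, and since the total degree is pinned at $g-1$ this forces $d_1'\ge d_1$, with equality only for the chosen monomial itself; its coefficient $Q_{d_1}(d_1+g-1)\prod_{i\ge 2}Q_{d_i}(d_i)\prod_{j}Q_{e_j+1}(e_j+1)$ is non-zero because all roots of $Q_l$ are at most $l-1$. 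Each relation therefore rewrites the chosen monomial in terms of monomials with strictly larger exponent of $\psi_1$, and downward induction on that exponent (started vacuously, since the exponent cannot exceed $g-1$) kills all $m$-index monomials modulo the pure $\psi$-monomials and the inductive hypothesis. A matrix non-degeneracy computation of the kind you anticipate is needed in this paper only later, for the reduction of $R^{g-1}(\MMM_{g,n})$ to $n$ generators (proposition~\ref{prop:nondegeneracy}), not for this proposition. Finally, your opening reduction of $\psi$-$\kappa$-monomials to pushforwards of pure $\psi$-monomials is correct but unnecessary: one can work directly with the pushforward of the relation itself, as the paper does.
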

\begin{proof}
	The tautological ring of the open moduli space is generated by \( \psi \)- and \( \kappa \)-classes. Hence, a spanning set for the ring \( R^{g-1}(\mathcal{M}_{g,n} )\) is
	\begin{equation}
		\Big\{\psi_1^{d_1}\cdots \psi_n^{d_n}\kappa_{e_1,\dots,e_{m}}\mid m\geq 0, d_i\geq 0, e_j\geq 1, \sum_{i=1}^n d_i+\sum_{j=1}^{m} e_j=g-1\Big\}
	\end{equation}
	Let $V\subset R^{g-1}(\mathcal{M}_{g,n})$ be the subspace spanned by the monomials 
	\begin{equation}
		\Big\{\psi_1^{d_1}\cdots \psi_n^{d_n}\mid \sum_{i=1}^n d_i=g-1\Big\}.
	\end{equation}
	We want to show that $R^{g-1}(\mathcal{M}_{g,n})/V=0$. We do this by induction on the number $m$ of indices of the $\kappa$-class. 

	Let us start with the case \( m = 1\). Consider a relation $\Omega^{g}_{g,n+1}(a_1,\dots,a_{n+1})$ for some admissible choice of the primary fields. In this case we have contributions by the open stratum of smooth curves and by the divisors $D_{n+1,\ell}$, $\ell=1,\dots,n$. The open stratum gives us the following classes:
	\[
	\sum_{\substack{d_1+\dots+d_{n+1}=g\\0\leq d_i\leq a_i}} \prod_{i=1}^{n+1} Q_{d_i}(a_i) \prod_{i=1}^{n+1} \psi_i^{d_i}
	\]
	The condition $d_i\leq a_i$ follows from the fact that $Q_{d}(a)=0$ for $d>a$. The contribution of $D_{n+1,\ell}$ is given by
	\[
	\sum_{\substack{d_1+\dots+d_{n}=g-1\\0\leq d_i \leq a_i+\delta_{i\ell} (a_{n+1}-1) }} \prod_{i=1}^{n} Q_{d_i+\delta_{i\ell}}(a_i+\delta_{i\ell} a_{n+1}) \prod_{i\not=\ell,n+1} \psi_i^{d_i} D_{i,\ell} \pi^*(\psi_\ell^{d_\ell})
	\]
	Here $\pi\colon \MMM^{\textrm{rt}[n]}_{g,n+1}\to \MMM_{g,n}$ is the natural projection. The sum of the pushforwards of these classes to $\MMM_{g,n}$ is equal to 
	\begin{equation}
		0=\sum_{\substack{d_1+\cdots+d_n+e=g-1\\d_i\geq 0, e\geq 1}}\prod_{i=1}^{n} Q_{d_i}(a_i) Q_{e+1}(a_{n+1}) \prod_{i=1}^{n} \psi_i^{d_i}\kappa_e\label{eq:firstgraph}
	\end{equation}
	in $R^{g-1}(\mathcal{M}_{g,n})/V$. Thus we have equation~\eqref{eq:firstgraph} in $R^{g-1}(\mathcal{M}_{g,n})/V$ for each choice of $a_1,\dots,a_{n+1}$ such that $\sum_{i=1}^{n+1} a_i=2g-1$. 
	
	If we choose the lexicographic order on the monomials $\psi_1^{d_1}\cdots\psi_n^{d_n}\kappa_e$, we can then choose the values of the $a_i$ in such a way that the matrix of relations becomes lower triangular, in the following manner. For every monomial \( \psi_1^{d_1} \dotsb \psi_n^{d_n} \kappa_e \), we choose the relation with primary fields \( a_i = d_i \) for \( i =2,\dots,n\), \( a_{n+1} =e+1\), and \( a_1 = d_1+g-1 \). Equation~\eqref{eq:firstgraph} allows to express this monomial in terms of similar monomials with the strictly larger exponent of \( \psi_1 \), so this set of relations does indeed give a lower-triangular matrix. This matrix is invertible, hence all monomials of the form $\psi_1^{d_1}\cdots\psi_n^{d_n}\kappa_e$ are equal to $0$ in $R^{g-1}(\mathcal{M}_{g,n})/V$.
	
	Now assume that all the monomials which have a $\kappa$-class with $m-1$ indices or fewer are equal to $0$ in $R^{g-1}(\mathcal{M}_{g,n})/V$. Consider a relation $\Omega^{g-1+m}_{g,n+1}(a_1,\dots,a_{n},b_1,\dots,b_m)$. This relation, after the push-forward to $\MMM_{g,n}$, gives many terms with no $\kappa$-classes and also with $\kappa$-classes with $\leq m-1$ indices, and also some terms with $\kappa$-classes with $m$ indices. The latter terms are therefore equal to $0$ in $R^{g-1}(\mathcal{M}_{g,n})/V$, namely, we have: 
	\begin{equation}
		0=\sum_{\substack{0\leq d_i\leq a_i\\ 1\leq e_j\leq b_{j}-1}}
		\Big( \prod_{i=1}^n Q_{d_i}(a_i)\psi_{i}^{d_i} \Big) \Big( \prod_{j=1}^m Q_{e_j+1}(b_j) \Big) \kappa_{e_1,\dots,e_m} \label{eq:firstgraph2}
	\end{equation}
for  \( \sum_{i=1}^n d_i + \sum_{j=1}^m e_j = g-1 \). Equation~\eqref{eq:firstgraph2} is valid for each choice of the primary fields $a_i,b_j$ such that  $\sum_{i=1}^n a_i+\sum_{j=1}^m b_j =2g-2+m$.

Choosing a monomial \( \psi_1^{d_1} \dotsb \psi_n^{d_n} \kappa_{e_1, \dotsc, e_m} \), we can choose the primary fields to be \( a_i = d_i \) for \( i =2,\dots,n\), \( b_j = e_j +1\) for \( j=1,\dots,m\), and \( a_1 = d_1 + g -1 \). Again, this relation expresses our monomial as a linear combination of similar monomials with strictly higher exponent of \( \psi_1 \). By downward induction on this exponent, all monomials with \( m\) \( \kappa \)-indices vanish in $R^{g-1}(\mathcal{M}_{g,n})/V$ as well.

Thus $R^{g-1}(\mathcal{M}_{g,n})/V=0$. In other words, any monomial which has a $\kappa$-class as a factor can be expressed as a linear combination of monomials in $\psi$-classes.
\end{proof}

An immediate consequence of this proposition for $n=1$ is the result of Looijenga.

\begin{corollary}[\cite{Looijenga1995}]
 For all \( g \geq 2\), $R^{g-1}(\mathcal{M}_{g,1})=\mathbb{Q} \psi_1^{g-1}$.
\end{corollary}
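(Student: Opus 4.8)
The plan is to combine Proposition~\ref{prop:psispanRg-1} with a standard non-vanishing input. Setting $n=1$ in Proposition~\ref{prop:psispanRg-1}, the only admissible monomial is $\psi_1^{g-1}$: there is a single exponent $d_1$, and the degree constraint forces $d_1 = g-1$. Hence $R^{g-1}(\MMM_{g,1})$ is spanned by the single class $\psi_1^{g-1}$, which already yields the inclusion $R^{g-1}(\MMM_{g,1}) \subseteq \mathbb{Q}\psi_1^{g-1}$ and the upper bound $\dim R^{g-1}(\MMM_{g,1}) \leq 1$. This is the content the proposition supplies directly, and it is the ``immediate'' half of the corollary.

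To upgrade the inclusion to the stated equality I must check that the span is genuinely one-dimensional, i.e.\ that $\psi_1^{g-1} \neq 0$ in $R^{g-1}(\MMM_{g,1})$. The route I would take is to pair $\psi_1^{g-1}$ against the Hodge class $\lambda_g \lambda_{g-1}$ on the compactification $\oM_{g,1}$. Fix any lift $\overline{\psi}_1^{\,g-1} \in R^{g-1}(\oM_{g,1})$ of $\psi_1^{g-1}$. The kernel of the restriction map $R^{*}(\oM_{g,1}) \to R^{*}(\MMM_{g,1})$ is generated by classes pushed forward from the boundary, so if $\psi_1^{g-1}$ vanished on the open part, then $\overline{\psi}_1^{\,g-1}$ would be a sum of boundary-supported classes. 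The dimension count $\deg\big(\psi_1^{g-1}\lambda_g\lambda_{g-1}\big) = (g-1)+g+(g-1) = 3g-2 = \dim_{\mathbb C}\oM_{g,1}$ shows that $\int_{\oM_{g,1}} \psi_1^{g-1}\lambda_g\lambda_{g-1}$ is a well-defined top intersection number.

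The key geometric input is that $\lambda_g\lambda_{g-1}$ vanishes on the whole boundary $\partial\oM_{g,1}$: the class $\lambda_g$ restricts to zero on every stratum carrying a non-separating node, while $\lambda_g\lambda_{g-1}$ restricts to zero on the compact-type boundary. Consequently the integral of $\lambda_g\lambda_{g-1}$ against any boundary-supported class is $0$, whereas the Hodge integral $\int_{\oM_{g,1}} \psi_1^{g-1}\lambda_g\lambda_{g-1}$ is a classically known nonzero rational number. Were $\psi_1^{g-1}$ zero on $\MMM_{g,1}$, the two computations would contradict each other; hence $\psi_1^{g-1}\neq 0$, and together with the upper bound this gives $R^{g-1}(\MMM_{g,1}) = \mathbb{Q}\psi_1^{g-1}$. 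The main obstacle is precisely this non-vanishing step: the spanning statement is formal from Proposition~\ref{prop:psispanRg-1}, but pinning down that the span is one-dimensional rests on the external facts that the $\lambda_g\lambda_{g-1}$ integral is nonzero and that $\lambda_g\lambda_{g-1}$ annihilates all boundary contributions.
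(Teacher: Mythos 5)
Your proposal is correct, and its first half coincides exactly with the paper's proof: the corollary is deduced there as an immediate consequence of proposition~\ref{prop:psispanRg-1} with \( n=1 \), since the degree constraint leaves \( \psi_1^{g-1} \) as the only spanning monomial, so \( R^{g-1}(\MMM_{g,1}) = \mathbb{Q}\psi_1^{g-1} \) read as an equality of the ring with the \emph{span} of \( \psi_1^{g-1} \). Where you genuinely diverge is the second half: the paper never proves the non-vanishing \( \psi_1^{g-1} \neq 0 \); indeed the introduction explicitly defers the sharper statement \( \dim R^{g-1}(\MMM_{g,n}) = n \) to the non-degeneracy of a matrix of intersection numbers treated in \cite{BuryakShadrinZvonkine2016}, and all the PPZ-relation machinery of the paper can only ever produce upper bounds (relations), never lower bounds. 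Your \( \lambda_g\lambda_{g-1} \) argument for the lower bound is the standard one and is sound: by excision any lift of a class vanishing on \( \MMM_{g,1} \) is a pushforward from the boundary, the pullback of \( \lambda_g\lambda_{g-1} \) under every gluing map vanishes (via \( \sigma^*\lambda_g = 0 \) on the non-separating divisor and Mumford's relation \( \lambda_h^2 = 0 \) on compact-type divisors), and the socle evaluation \( \int_{\oM_{g,1}}\psi_1^{g-1}\lambda_g\lambda_{g-1} \neq 0 \) then forces \( \psi_1^{g-1}\neq 0 \). So your version buys the honest one-dimensionality statement at the price of importing external Hodge-theoretic facts outside the paper's toolkit, while the paper's version stays entirely within its relation formalism and, for that reason, only asserts (and only needs) the spanning statement.
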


\subsection{Reduction to \texorpdfstring{$n$}{n} generators}\label{subsec:ngenerators}
In this subsection we prove the following proposition.
\begin{proposition}\label{prop:gen}
For $n \geq 2$ and $g \geq 2$,  every monomial of degree $g-1$ in $\psi$ classes and at most one $\kappa_1$-class can be expressed as linear combinations of the following $n$ classes
\begin{equation}
\psi_1^{g-1}, \,\psi^{g-2}_1 \psi_2,\, \dotsc, \,\psi^{g-2}_1 \psi_n,
\end{equation}
with rational coefficients.
\end{proposition}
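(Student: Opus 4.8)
The plan is to split the statement into a pure-$\psi$ part and a $\kappa_1$ part, and to reduce the second to the first. By Proposition~\ref{prop:psispanRg-1} every degree $g-1$ monomial carrying a $\kappa$-factor — in particular $\psi_1^{d_1}\cdots\psi_n^{d_n}\kappa_1$ — already lies in the subspace $V\subset R^{g-1}(\mathcal{M}_{g,n})$ spanned by the pure $\psi$-monomials $\psi_1^{d_1}\cdots\psi_n^{d_n}$ with $\sum_i d_i=g-1$. Since the $n$ target classes are exactly $\psi_1^{g-2}\psi_j$ for $j=1,\dots,n$ (with $\psi_1^{g-2}\psi_1=\psi_1^{g-1}$), i.e. the pure $\psi$-monomials with $\sum_{i\geq 2}d_i\leq 1$, it suffices to produce enough relations among pure $\psi$-monomials to eliminate every monomial with $\sum_{i\geq 2}d_i\geq 2$.

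The relations I would use are the same pushforwards of $\Omega^{g}_{g,n+1}(a_1,\dots,a_{n+1})$ from $\mathcal{M}^{\mathrm{rt}[n]}_{g,n+1}$ to $\mathcal{M}_{g,n}$ that underlie the proof of Proposition~\ref{prop:psispanRg-1}, but now keeping track of their pure-$\psi$ part rather than passing to $R^{g-1}(\mathcal{M}_{g,n})/V$. Such a pushforward reads
\begin{equation}
P_\psi(\vec a) + \sum_{e\geq 1} Q_{e+1}(a_{n+1}) \sum_{\sum_i d_i = g-1-e}\Big(\prod_{i=1}^n Q_{d_i}(a_i)\Big)\,\psi_1^{d_1}\cdots\psi_n^{d_n}\,\kappa_e = 0,
\end{equation}
where the $\kappa$-part is the relation \eqref{eq:firstgraph} and $P_\psi(\vec a)$ is the pure-$\psi$ contribution of the smooth stratum together with the boundary divisors $D_{\ell,n+1}$; its coefficients are assembled from the $Q_{d_i}(a_i)$ and from boundary-shifted values $Q_{d_\ell+1}(a_\ell+a_{n+1})$. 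Replacing each $\kappa_e$-monomial by the pure-$\psi$ expression furnished by Proposition~\ref{prop:psispanRg-1} turns every such identity into a genuine relation among pure $\psi$-monomials, and since all coefficients are polynomial in $a_1,\dots,a_{n+1}$ (subject only to $\sum a_i = 2g-1$) I may specialize the primary fields to arbitrary complex values, in particular to the roots $-\tfrac12,0,\tfrac12,1,\dots$ of the $Q_m$ and to their integer shifts.

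With this family of pure-$\psi$ relations I would run a Gaussian elimination ordered by $\sum_{i\geq 2}d_i$ (refined lexicographically), in the same spirit as the $\kappa$-elimination of Proposition~\ref{prop:psispanRg-1}. For a target monomial with $\sum_{i\geq 2}d_i\geq 2$ I would set the secondary fields equal to the target exponents, $a_i=d_i$ for $i\geq 2$, and use the vanishing $Q_d(a)=0$ for $d>a$ to force the competing monomials to carry more of the total degree on $\psi_1$; solving for the target and inducting downward on $\sum_{i\geq 2}d_i$ would then express every non-generator in terms of the $n$ classes $\psi_1^{g-2}\psi_j$.

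The main obstacle is the nonvanishing of the leading (pivot) coefficient of these relations. In contrast with the $\kappa$-relations of Proposition~\ref{prop:psispanRg-1}, whose pivots are single products of $Q$'s, the present pivots mix the smooth-stratum and boundary contributions and are highly prone to cancellation: one checks, for instance, that the naive choice $a_{n+1}=1$ annihilates the relation completely, since $1$ is a root of every $Q_{e+1}$ with $e\geq 1$ and $P_\psi$ vanishes identically there. Moreover the boundary-shifted arguments $a_\ell+a_{n+1}$ mean that, for generic $a_{n+1}$, the choice $a_i=d_i$ no longer confines the competing monomials strictly below the target in the chosen order. The crux is therefore to exhibit, for each non-generator target, a specialization of $\vec a$ at which the target is genuinely the leading term and its coefficient is provably nonzero; this is where the real work lies, and the pervasive cancellations — consistent with the known value $\dim R^{g-1}(\mathcal{M}_{g,n})=n$ — make it delicate.
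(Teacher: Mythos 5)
Your proposal is a plan, not a proof, and the gap you flag at the end is precisely the point where the paper's argument lives --- and where the authors explicitly state your route is not known to work. The paper's remark after proposition~\ref{prop:gen} says that keeping the $\kappa_1$-monomials in the generating system is not a cosmetic choice: ``We do not know of any argument that would allow us to obtain the sufficient number of relations if we consider only monomials of $\psi$-classes as generators.'' Your strategy --- first eliminate all $\kappa$-classes via proposition~\ref{prop:psispanRg-1}, then look for relations purely among $\psi$-monomials obtained from $\pi_*\Omega^{g}_{g,n+1}$ --- is exactly the approach this remark rules out (or at least declares open). There is also a structural problem you do not address: the expressions of the $\kappa_e$-monomials in terms of pure $\psi$-monomials come from the \emph{same} one-parameter-pushforward family $\Omega^{g}_{g,n+1}(\vec a)$; substituting them back into members of that family used in the triangular elimination returns tautologies $0=0$, so the rank of the genuinely new pure-$\psi$ relations is entirely unclear. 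You would need to prove a non-degeneracy statement for the resulting pivots, and you give no mechanism for doing so.

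The paper avoids this by working with a richer family of relations and a larger generating set. It uses the $x=1$ relations $\pi^{(2)}_*\Omega^{g+1}_{g,n+2}(a_1,\dots,a_n,\tfrac32,-\tfrac12)$ --- degree $g+1$ classes pushed forward along \emph{two} forgetful maps, with the half-integer primary fields $a_{n+1}=\tfrac32$, $a_{n+2}=-\tfrac12$ made admissible by the polynomiality analysis of section~\ref{section:PPZrelations} --- and keeps one $\kappa_1$-monomial $y$ alongside the $\psi$-monomials $x_\ell$ as unknowns (lemmata~\ref{lem:tautooffboundary} and~\ref{lem:j-tautooffboundary}). The resulting system is triangularized by an induction on the exponent of $\psi_1$, but the hardest case (all exponents $d_2=\dots=d_n=1$, the case $s=0$) is not handled by any choice of vanishing $Q$-arguments; it requires the extra relation $\Rel_*$ from a third vector of primary fields and the genuinely quantitative non-degeneracy $\hat c_0 - 2\hat c_1 + \hat c_2 \neq 0$ of proposition~\ref{prop:nondegeneracy}, whose proof occupies all of section~\ref{sec:non-degmatrix} and comes down to showing a specific cubic polynomial $S(g,n)$ has no integer zeros with $3\le n\le g-1$. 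Nothing in your proposal substitutes for that computation; the ``pervasive cancellations'' you correctly anticipate are not an incidental nuisance but the core difficulty, and your framework (pure $\psi$-monomials, single pushforward, integer primary fields) gives you strictly fewer relations to fight them with than the paper's does.
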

Together with the previous subsection this gives a new proof of
\begin{theorem}[\cite{BuryakShadrinZvonkine2016}] For $n \geq 2$ and $g \geq 2$ \label{thm:BSZ16}
\begin{equation}
\dim_{\mathbb{Q}} R^{g-1}(\MMM_{g,n}) \leq n.
\end{equation}
\end{theorem}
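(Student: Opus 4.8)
The plan is to obtain the theorem as an immediate corollary of the two propositions of this section, which together exhibit an explicit spanning set of size $n$ for $R^{g-1}(\MMM_{g,n})$. No further Pandharipande-Pixton-Zvonkine relations are needed beyond those already used; the substantive work is entirely contained in the propositions, and the theorem itself is a short assembly.

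First I would invoke Proposition~\ref{prop:psispanRg-1}: although the tautological ring in degree $g-1$ is a priori generated by monomials $\psi_1^{d_1}\cdots\psi_n^{d_n}\kappa_{e_1,\dots,e_m}$ involving arbitrary $\kappa$-classes, that proposition shows the subspace $V$ spanned by the pure $\psi$-monomials $\psi_1^{d_1}\cdots\psi_n^{d_n}$ with $\sum_{i=1}^n d_i = g-1$ is already all of $R^{g-1}(\MMM_{g,n})$. This reduces the question to bounding the number of independent pure $\psi$-monomials. Next I would apply Proposition~\ref{prop:gen} in the case with no $\kappa_1$-factor: every such $\psi$-monomial of degree $g-1$ is a rational linear combination of the $n$ classes $\psi_1^{g-1},\psi_1^{g-2}\psi_2,\dots,\psi_1^{g-2}\psi_n$. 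Chaining the two reductions, these $n$ classes span the whole of $R^{g-1}(\MMM_{g,n})$, and hence $\dim_{\mathbb{Q}} R^{g-1}(\MMM_{g,n})\leq n$, as claimed.

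The main obstacle is therefore not in this final assembly but in establishing Proposition~\ref{prop:gen}. There the difficulty is to extract, from the relations $\Omega^{g-1+m}_{g,n+1}$ specialized at $r=\tfrac12$, enough control to collapse the entire family of $\psi$-monomials of degree $g-1$ down to the short list headed by $\psi_1^{g-2}\psi_j$. I expect the key manoeuvre to be the same triangularity argument already used in the proof of Proposition~\ref{prop:psispanRg-1}: choosing the primary fields so that each relation expresses a given monomial in terms of monomials carrying a strictly larger exponent of $\psi_1$, thereby producing an invertible (triangular) system in a suitable lexicographic order, all while carefully tracking the at-most-one $\kappa_1$-term that survives the push-forward to $\MMM_{g,n}$.
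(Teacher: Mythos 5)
Your proposal is correct and is exactly the paper's own route: Theorem~\ref{thm:BSZ16} is stated there as an immediate consequence of Proposition~\ref{prop:psispanRg-1} (reduction to pure $\psi$-monomials) combined with Proposition~\ref{prop:gen} (reduction of those monomials to the $n$ classes $\psi_1^{g-1},\psi_1^{g-2}\psi_2,\dotsc,\psi_1^{g-2}\psi_n$). One minor caveat on your closing speculation: the proof of Proposition~\ref{prop:gen} actually uses the relations $\Omega^{g+1}_{g,n+2}$ with two forgotten points and half-integer primary fields $a_{n+1}=\tfrac32$, $a_{n+2}=-\tfrac12$, and besides the triangularity argument it requires the non-degeneracy statement of Proposition~\ref{prop:nondegeneracy}, but this does not affect the validity of your assembly of the theorem.
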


\begin{remark}
Note that the possible $\kappa_1$-class is added in proposition~\ref{prop:gen} for a technical reason; it seems to be completely unnecessary in the light of proposition~\ref{prop:psispanRg-1}. In fact, when we include $\kappa_1$, we consider systems of generators approximately twice as large, but this allows us to obtain a much larger system of tautological relations. We do not know of any argument that would allow us to obtain the sufficient number of relations if we consider only monomials of $\psi$-classes as generators. 
\end{remark}

We reduce the number of generators by pushing forward enough relations via the map 
\begin{equation}
\pi^{(2)}_* \colon R^{g+1}(\oM_{g,n+2}) \rightarrow R^{g-1}(\oM_{g,n}),
\end{equation}
where $\pi^{(2)}$ is the forgetful morphism for the last two marked points (we abuse notation a little bit here, restricting the map $\pi^{(2)}$ to $\MMM^{\text{rt}[n]}_{g,n+2} \rightarrow \MMM_{g,n}$).
For $n \geq 2$, let us consider the following vector of primary fields:
\begin{align}
&\vec a \coloneqq \left(a_1 = 2g - \textstyle\frac 32 - A, a_2, \dots, a_n, a_{n+1} = \frac 32, a_{n+2} = -\frac 12 \right)\label{eq:condA},
\end{align}
where $a_i \in \Z_{\geq 0}$, $i = 2, \dots, n$, $A= \sum_{i=2}^n a_i\leq g - 2$.
 We consider the following monomials in $R^{g-1}(\mathcal{M}_{g,n})$:
\begin{align}
y &\coloneqq \psi_1^{g - 2 - A}\prod_{i=2}^n\psi_i^{a_i} \kappa_1,\\
x_\ell &\coloneqq \psi_1^{g-2-A}\prod_{i=2}^n\psi_i^{a_i+\delta_{i\ell}} , \quad  \ell=2, \dots, n. 
\end{align}

\begin{lemma}\label{lem:tautooffboundary}
The tautological relation $\pi^{(2)}_* \Omega_{g, n+2}^{g+1}(\vec a)$, where $\vec a$ is defined in equation~\eqref{eq:condA}, has the following form:
\begin{align} \label{eq:yxell-relation}
& y\cdot \prod_{i=2}^n Q_{a_i}(a_i) Q_2( \textstyle \frac 32) \left( Q_{g - 1 - A}(2g - \frac 32 - A) - Q_{g - 1 - A}(2g - 2 - A)\right) \\
& -\sum_{\ell=2}^n x_\ell \cdot \prod_{\substack{i=2}}^n Q_{a_i+2\delta_{i\ell}}(a_i+ \textstyle\frac 32\delta_{i\ell}) \left( Q_{g - 1 - A}(2g - \frac 32 - A) - Q_{g - 1 - A}(2g - 2 - A)\right) \\
& = \text{ terms divisible by }\psi_1^{g-1-A} .
\end{align}
\begin{proof} In order to prove this lemma we have to analyze all strata in $\MMM_{g,n+2}^{\text{rt}[n]}$. The list of strata is given in figure~\ref{fig:chefigura}. Each stratum should be decorated in all possible ways by the $R$-matrices  with $\psi$-classes as described in section~\ref{section:PPZrelations}.

There are several useful observations that simplify the computation. The leaf labeled by $a_i$, $i=2,\dots,n$, is equipped by $\psi_i^{d_i} Q_{d_i}(a_i)$. This implies that $d_i\leq a_i$. Since $Q_{>2}(\frac 32)=0$ (respectively, $Q_{>0}(-\frac 12)=0$), we conclude that the exponent of $\psi_{n+1}$ is $\leq 2$ (respectively, the exponent of $\psi_{n+2}$ is equal to $0$). Note that we can obtain a monomial with $\kappa_1$-class in the push-forward only if we have $\psi_{n+1}^2$ in the original decorated graph.

Similar observations are also valid for the exponents of the $\psi$-classes at the nodes. Note that there are no $\psi$-classes on the genus $0$ components in any strata except for the case of the dual graph~\subref{VI}, where we must have a $\psi$-class at one of the four points (three marked points and the node) of the genus $0$ component, otherwise the pushforward is equal to $0$. So, for instance, we have $\psi^d$ at the genus $g$ branch of the node on the dual graph~\subref{II} with coefficient $-Q_{d+1}(a_j-\frac 12)$, so in this case $d\leq a_j-1$. If we have $\psi^d$ at the genus $g$ branch of the node on the dual graph~\subref{VIII}, then the product of the coefficients that we have on the edges of this graph is equal to $Q_1(a_j-\frac 12)Q_{d+1}(a_j-\frac 12+\frac 32 -1)$, so in this case $d\leq a_j-1$. And so on; one more example of a detailed analysis of the graphs~\subref{VI}-\subref{IX} is given in lemma~\ref{lem:7terms} in the next section.


We see that we have severe restrictions on the possible powers of $\psi$-classes at all points but the one labeled by $1$, where the exponent is bounded from below, also after the pushforward. Then it is easy to see by the analysis of the graph contributions as above that the exponent of $\psi_1$ is $\geq g-2-A$. Let us list all the terms whose pushforwards to $\MMM_{g,n}$ contain the terms with $\psi_1^{g-2-A}$. 


\begin{itemize}
	\item One of the classes in $\MMM_{g,n+2}^{\text{rt}[n]}$ corresponding to graph~\subref{I} is
	$\psi_1^{g-1-A}\prod_{i=2}^n\psi_i^{a_i}\psi_{n+1}^2$ with  coefficient $\prod_{i=2}^n Q_{a_i}(a_i) Q_2( \textstyle \frac 32) Q_{g - 1 - A}(2g - \frac 32 - A)$. Its pushforward contains the monomial $y$ and the terms divisible by $\psi_1^{g-1-A}$. 
	\item Consider graph~\subref{II} for $j=1$. Let $\pi_{n+2}\colon \oM_{g,n+2}\to\oM_{g,n+1}$ be the forgetful morphism for the $(n+2)$-nd point. One of the classes corresponding to this graph is $\prod_{i=2}^n\psi_i^{a_i}\psi_{n+1}^2D_{1,n+2} (\pi_{n+2})^*(\psi_1^{g-2-A})$ with coefficient $(-1)\prod_{i=2}^n Q_{a_i}(a_i) Q_2( \textstyle \frac 32)\cdot Q_{g - 1 - A}(2g - 2 - A)$. Its pushforward is equal to the monomial $y$.
	\item Let $\pi_{n+1}\colon \oM_{g,n+2}\to\oM_{g,n+1}$ be the forgetful morphism for the $(n+1)$-st point. One of the classes corresponding to graph~\subref{III} for $j=\ell$ is
	$\prod_{i\not=1,\ell}\psi_i^{a_i}\psi_1^{g-1-A} D_{\ell,n+1}\cdot  (\pi_{n+1})^*(\psi_\ell^{a_\ell+1})$ with coefficient 
	$(-1)\prod_{i\not=1,\ell} Q_{a_i}(a_i) Q_{a_\ell+2}(a_\ell+ \textstyle \frac 32) Q_{g - 1 - A}(2g - \frac 32 - A)$. The pushforward of this class contains the monomial $x_\ell$ and the terms divisible by $\psi_1^{g-1-A}$.
	\item Consider graph~\subref{V} for $j=\ell$ and $k=1$. One of the classes corresponding to this graph is $\prod_{i\not=1,\ell}\psi_i^{a_i} D_{\ell,n+1} (\pi_{n+1})^*(\psi_\ell^{a_\ell+1})D_{1,n+2}(\pi_{n+2})^*(\psi_1^{g-2-A})$ with coefficient $\prod_{i\not=1,\ell} Q_{a_i}(a_i) Q_{a_\ell+2}(a_\ell+\frac 32) Q_{g - 1 - A}(2g - 2 - A)$. Its pushforward is equal to the monomial $x_\ell$.
\end{itemize}

Collecting all these terms together, we obtain the left hand side of equation~\eqref{eq:yxell-relation}. Then it is easy to verify case by case that all other graphs and all other possible decorations on these four graphs produce under the push-forward only monomials divisible by $\psi_1^{g-1-A}$.
\end{proof}
\end{lemma}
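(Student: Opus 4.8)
The plan is to compute the pushforward $\pi^{(2)}_*\Omega_{g,n+2}^{g+1}(\vec a)$ by enumerating, stratum by stratum, every decorated dual graph in figure~\ref{fig:chefigura} and tracking only those contributions whose pushforward to $\MMM_{g,n}$ produces a monomial divisible by $\psi_1^{g-2-A}$ but \emph{not} by $\psi_1^{g-1-A}$. The whole point is that the target degree is $g-1$, and all the primary fields other than $a_1$ are small ($a_2,\dots,a_n$ with $\sum a_i = A \leq g-2$, plus $a_{n+1}=\frac 32$, $a_{n+2}=-\frac 12$), so the bulk of the cohomological degree must be carried by $\psi_1$. First I would establish the vanishing constraints that pin down the admissible decorations: since each leaf labeled $a_i$ carries $Q_{d_i}(a_i)\psi_i^{d_i}$ and $Q_m(a)$ has degree $2m$ with the explicit root set $-\frac12,0,\frac12,1,\dots,m-1$, one gets $d_i\leq a_i$; in particular $Q_{>2}(\frac 32)=0$ forces the exponent of $\psi_{n+1}$ to be at most $2$, and $Q_{>0}(-\frac 12)=0$ forces the exponent of $\psi_{n+2}$ to be exactly $0$. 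The key observation, already noted in the lemma, is that a $\kappa_1$-class can appear in the pushforward only when $\psi_{n+1}^2$ sits on the genus-$g$ component (the forgetful map converts $\psi_{n+1}^{2}$ into $\kappa_1$), which is exactly what produces the class $y$.

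Next I would carry out the same exponent bookkeeping at the nodes. On the genus $0$ tails there are no $\psi$-classes except on graph~\subref{VI}, and on the genus $g$ branches the edge coefficients are again products of $Q$-polynomials evaluated at shifted arguments (for example $-Q_{d+1}(a_j-\frac12)$ on graph~\subref{II}, forcing $d\leq a_j-1$), so that every marked-point and node exponent other than that of leaf $1$ is bounded above by its associated primary field. Summing these bounds against the total degree $g+1$ and subtracting the degree lost under the two-step forgetful pushforward shows that the exponent of $\psi_1$ is at least $g-2-A$, with equality achieved only for the four special decorations identified in the statement. I would then read off precisely those four contributions: graph~\subref{I} with $\psi_1^{g-1-A}\prod\psi_i^{a_i}\psi_{n+1}^2$ contributing $+y$ with coefficient $\prod_{i}Q_{a_i}(a_i)Q_2(\frac32)Q_{g-1-A}(2g-\frac32-A)$; graph~\subref{II} at $j=1$ contributing $-y$ with the same product but $Q_{g-1-A}(2g-2-A)$; graph~\subref{III} at $j=\ell$ contributing $+x_\ell$; and graph~\subref{V} at $j=\ell,k=1$ contributing $-x_\ell$. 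Combining the two $y$-terms and the two $x_\ell$-terms yields exactly the factored coefficients $Q_{g-1-A}(2g-\frac32-A)-Q_{g-1-A}(2g-2-A)$ displayed in equation~\eqref{eq:yxell-relation}.

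Finally I would verify that every remaining configuration lands in the error term. This means checking that all other graphs (\subref{IV}, \subref{VI}--\subref{IX}) and all other decorations of the four distinguished graphs force the exponent of $\psi_1$ up to at least $g-1-A$, hence contribute only monomials divisible by $\psi_1^{g-1-A}$. The mechanism is uniform: any extra $\psi$-power placed at a node, or any genus-$0$ tail absorbing marked points $1$ together with others, either raises the $\psi_1$-exponent directly or, after pushforward, is absorbed into a $\psi_1^{g-1-A}$-divisible monomial, a pattern exemplified by the graph~\subref{VI}--\subref{IX} analysis deferred to lemma~\ref{lem:7terms}. The main obstacle I anticipate is precisely this last, combinatorially heavy case-by-case verification: one must confirm for each of the nine strata and each admissible assignment of $\psi$-exponents (constrained by the $Q$-root conditions) that no additional term with $\psi_1^{g-2-A}$ slips through. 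The two-step forgetful pushforward $\oM_{g,n+2}\to\oM_{g,n}$ must be applied carefully, using the projection formula and the dilaton/string-type identities for $\psi$-classes on the contracted tails, so that the degree bookkeeping is exact rather than merely an inequality; tracking the signs coming from the dilaton-leaf coefficient $-\sum Q_{m+1}$ on the edges is the most error-prone part of the computation.
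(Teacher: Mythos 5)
Your proposal follows the paper's proof almost step for step: the same root-based exponent bounds ($d_i\leq a_i$, exponent of $\psi_{n+1}$ at most $2$, exponent of $\psi_{n+2}$ equal to $0$), the same observation that $\kappa_1$ can only arise from $\psi_{n+1}^2$, the same lower bound $g-2-A$ on the exponent of $\psi_1$, and the same four distinguished contributions from graphs~\subref{I}, \subref{II}, \subref{III}, and~\subref{V}. However, your sign assignment for the $x_\ell$-contributions is wrong, and with it your derivation does not produce equation~\eqref{eq:yxell-relation} as claimed. In the Givental formula every edge carries the factor $\bigl(\eta^{-1}-R^{-1}(\psi')\,\eta^{-1}\,R^{-1}(\psi'')^{T}\bigr)/(\psi'+\psi'')$, and since $R^{-1}(\psi)=\mathrm{Id}+O(\psi)$, every monomial in its expansion comes with one overall minus sign; hence each edge of a dual graph contributes a factor $(-1)$. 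Graphs~\subref{I}, \subref{II}, \subref{III}, \subref{V} have $0,1,1,2$ edges respectively, so the correct signs are $+,-,-,+$: graph~\subref{III} contributes $-x_\ell$ times a coefficient containing $Q_{g-1-A}(2g-\frac32-A)$, and graph~\subref{V} contributes $+x_\ell$ times a coefficient containing $Q_{g-1-A}(2g-2-A)$, which combine to the term $-\sum_\ell x_\ell\cdot(\dotsc)\bigl(Q_{g-1-A}(2g-\frac32-A)-Q_{g-1-A}(2g-2-A)\bigr)$ appearing in the lemma. You instead assign $+$ to graph~\subref{III} and $-$ to graph~\subref{V}, apparently attaching the sign to the presence or absence of the divisor $D_{1,n+2}$ rather than to the parity of the number of edges --- a rule that happens to give the right answer for the two $y$-terms (graphs~\subref{I} and~\subref{II}) but fails for the $x_\ell$-terms. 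With your signs the four terms sum to $y\cdot(\dotsc)+\sum_\ell x_\ell\cdot(\dotsc)$, which is not the stated relation, so the claim that your combination yields ``exactly'' the displayed coefficients is internally inconsistent.

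A related conceptual slip is that you attribute these minus signs to ``the dilaton-leaf coefficient $-\sum Q_{m+1}$ on the edges.'' Dilaton leaves play no role here at all: the whole point of the specialization $r=\frac12$ is that $Q_m(0)=0$ for $m\geq 1$ kills every graph carrying a dilaton leaf. The signs at issue come exclusively from the edge factors described above. Once the per-edge minus sign is tracked correctly, the rest of your argument --- the exponent bookkeeping, the identification of the four contributing decorations, and the deferral of graphs~\subref{VI}--\subref{IX} to lemma~\ref{lem:7terms} --- coincides with the paper's proof and goes through.
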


Let $a_j>0$ for $j=2,\dots,n$. Consider a vector of primary fields $\vec a^{(j)}$ obtained from $\vec a$ by adding $\frac 12$ to $a_1$ and subtracting $\frac 12$ from $a_j$, that is, 
\begin{equation}
\vec{a}^{(j)} := \left(2g-1-A, a_2, \dots, a_{j-1}, a_j - \textstyle\frac12, a_{j+1}, \dots, a_n, \frac 32, -\frac12\right),
\end{equation}

\begin{lemma}\label{lem:j-tautooffboundary}
	The tautological relation $\pi^{(2)}_* \Omega_{g, n+2}^{g+1}(\vec a^{(j)})$ has the following form:
	\begin{align} \label{eq:j-yxell-relation}
	& y\cdot \prod_{i=2}^n Q_{a_i}(a_i-\textstyle \frac 12 \delta_{ij}) Q_2( \textstyle \frac 32) \left( Q_{g - 1 - A}(2g - 1 - A) - Q_{g - 1 - A}(2g - \frac 32 - A)\right) \\
	& -\sum_{\substack{\ell=2\\\ell\not=j}}^n x_\ell \cdot \prod_{\substack{i=2}}^n Q_{a_i+2\delta_{i\ell}}(a_i+ \textstyle\frac 32\delta_{i\ell}-\textstyle \frac 12 \delta_{ij}) \left( Q_{g - 1 - A}(2g - 1 - A) - Q_{g - 1 - A}(2g - \frac 32 - A)\right) \\
	& = \text{ terms divisible by }\psi_1^{g-1-A} .
	\end{align}
\begin{proof}
	The proof of this lemma repeats the proof of lemma~\ref{lem:tautooffboundary}. It is only important to note that the terms that could produce the monomial $x_j$ contribute trivially since they have a factor of $Q_{a_j + 2}(a_j - \frac 12+ \frac 32)=0$ in their coefficients.
\end{proof}
\end{lemma}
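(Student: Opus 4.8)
The plan is to rerun, essentially verbatim, the stratum-by-stratum computation carried out in the proof of lemma~\ref{lem:tautooffboundary}, exploiting that the passage from $\vec a$ to $\vec a^{(j)}$ changes only the primary fields on the leaves $1$ and $j$: the field on leaf $1$ goes from $2g-\frac 32-A$ to $2g-1-A$ (a shift by $+\frac 12$), and the field on leaf $j$ goes from $a_j$ to $a_j-\frac 12$. Since these two shifts cancel, the sum of the primary fields is still $2g-\frac 32$, so $\Omega_{g,n+2}^{g+1}(\vec a^{(j)})$ is a legitimate relation of exactly the same ``Simplified relations II'' type, and its restriction to $\MMM_{g,n+2}^{\text{rt}[n]}$ is again obtained by specializing the polynomial coefficients of the nine graphs of figure~\ref{fig:chefigura}.

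First I would invoke the bookkeeping already established in lemma~\ref{lem:tautooffboundary}: after pushforward by $\pi^{(2)}_*$ to $\MMM_{g,n}$, the only contributions \emph{not} divisible by $\psi_1^{g-1-A}$ come from the four graphs of types~\subref{I}, \subref{II} (with $j=1$), \subref{III} (with $\ell$ on the rational tail), and~\subref{V} (with $j=\ell$, $k=1$); every other graph and every other decoration in figure~\ref{fig:chefigura} produces, after pushforward, only monomials divisible by $\psi_1^{g-1-A}$. Because the admissible $\psi$-exponents and the combinatorial shape of these strata do not depend on the numerical values of the primary fields, the very same four families remain the only relevant ones for $\vec a^{(j)}$.

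The substitution then propagates through the $Q$-factors in the obvious way. On graphs~\subref{I} and~\subref{III} the leaf $1$ now contributes $Q_{g-1-A}(2g-1-A)$, while on graphs~\subref{II} and~\subref{V} the genus-$g$ node branch is effectively evaluated at $a_1+a_{n+2}=a_1-\frac 12$, i.e.\ at $2g-\frac 32-A$, giving $Q_{g-1-A}(2g-\frac 32-A)$; the product of leaf factors $\prod_{i=2}^n Q_{a_i}(a_i)$ turns into $\prod_{i=2}^n Q_{a_i}(a_i-\frac 12\delta_{ij})$, and in the $x_\ell$-terms the $\ell$-leaf factor becomes $Q_{a_\ell+2}(a_\ell+\frac 32-\frac 12\delta_{\ell j})$. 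Combining the $y$-contributions of graphs~\subref{I} and~\subref{II}, and the $x_\ell$-contributions of graphs~\subref{III} and~\subref{V}, exactly as in equation~\eqref{eq:yxell-relation}, yields the two differences $Q_{g-1-A}(2g-1-A)-Q_{g-1-A}(2g-\frac 32-A)$ appearing in equation~\eqref{eq:j-yxell-relation}.

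The one genuinely new point — and the reason the sum in~\eqref{eq:j-yxell-relation} omits $\ell=j$ — is that the would-be $x_j$ contribution, coming from graphs~\subref{III} and~\subref{V} with $\ell=j$, carries the factor $Q_{a_j+2}(a_j-\frac 12+\frac 32)=Q_{a_j+2}(a_j+1)$. Since the roots of $Q_m$ are $-\frac 12,0,\frac 12,1,\dots,m-1$ (subsection~\ref{subsec:simprelI}) and $a_j+1$ is exactly the top root $m-1$ for $m=a_j+2$, this factor vanishes, so $x_j$ drops out. I expect the main, purely organizational obstacle to be verifying that the half-integer shift on leaf $j$ introduces no new non-$\psi_1^{g-1-A}$-divisible term in any of the remaining graphs~\subref{VI}--\subref{IX}; but since lemma~\ref{lem:tautooffboundary} already contains the complete case analysis, this amounts to checking that each relevant coefficient simply transforms under $a_1\mapsto a_1+\frac 12$, $a_j\mapsto a_j-\frac 12$, together with the single root identity $Q_{a_j+2}(a_j+1)=0$.
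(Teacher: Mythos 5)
Your proposal is correct and takes essentially the same route as the paper's own (two-line) proof: rerun the stratum-by-stratum analysis of lemma~\ref{lem:tautooffboundary} with the shifted primary fields, track how the $Q$-factors transform, and observe that the would-be $x_j$ contributions carry the factor $Q_{a_j+2}(a_j-\frac 12+\frac 32)=Q_{a_j+2}(a_j+1)=0$, since $a_j+1$ is the top root of $Q_{a_j+2}$. One small caution: your assertion that the admissible $\psi$-exponents ``do not depend on the numerical values of the primary fields'' holds only because $a_j>0$ — if $a_j=0$ the node coefficient in the graph with points $j,n+2$ on the rational tail becomes $Q_{d+1}(-1)$, which never vanishes, so the degree bookkeeping fails; this is exactly the content of the remark following the lemma in the paper.
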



\begin{remark} Note that we have the condition $a_j\geq 0$. Indeed, if $a_j=0$ we can still try to use $\vec a^{(j)}$ as a possible vector of primary fields. But in this case it can contain monomials with lower powers of $\psi_1$, and hence those relations cannot be used for our induction argument in increasing powers of $\psi_1$. To see this,  consider graph~\subref{II}. The coefficient that we have in this case for the degree $d$ of the  $\psi$-class on the genus $g$ branch of the node is equal to $Q_{d+1}(-\frac 12-\frac 12)$. Since $-1$ is not a zero of any polynomial $Q_{\geq 0}$, the degree $d$ can be arbitrarily high, and therefore there is no restriction from below on the degree of $\psi_1$. 
 \end{remark}


Let us distinguish now between zero and non-zero primary fields. Up to relabeling the marked points, we can assume that
\begin{equation}
a_2 = a_3 = \dots = a_s = 0,\quad  \text{and } \quad a_i \geq 1, \quad i = s+1, \dots, n.
\end{equation}
Note that, by the definition of the $Q$-polynomials, the coefficient of $y$ is not zero in all relations in lemmata~\ref{lem:tautooffboundary} and~\ref{lem:j-tautooffboundary}.
Dividing these relations by the coefficient of $y$, we obtain the $n-s$ linearly independent relations:
\begin{align}
\Rel_0: \quad y &- \sum_{l=2}^n\frac{Q_{a_l + 2}(a_l + 3/2)}{Q_{a_l}(a_l)Q_2(3/2)} x_l = \text{terms divisible by }\psi_1^{g-1-A}  \\
\Rel_j: \quad y &- \sum_{l=2}^n\frac{Q_{a_l + 2}(a_l + 3/2)}{Q_{a_l}(a_l)Q_2(3/2)} (1 - \delta_{j,l})x_l = \text{terms divisible by }\psi_1^{g-1-A} , 
\end{align}
for $ j=s+1, \dots, n$. Rescaling the generators by rational non-zero coefficients
$$\tilde{x}_l := -\frac{Q_{a_l+2}(a_l + 3/2)}{Q_{a_l}(a_l)Q_2(3/2)}x_l, \quad l=2,\dots,n$$
we can represent the relations in the following matrix:
\begin{center} $M\coloneqq$
	\begin{tabular}{l | c c c c c c c c c}
		&$y$  &$\tilde{x}_2$  &  $\cdots$  & $\tilde{x}_s$ &$\tilde{x}_{s+1}$ &$\tilde{x}_{s+2}$&$\tilde{x}_{s+3}$ &$\cdots$& $\tilde{x}_n$ \\ \hline
		$\Rel_0$& 1 & 1 & $\cdots$  & 1 & 1 & 1&1 &$\cdots$ & 1  \\
		$\Rel_{s+1}$& 1 & 1 & $\cdots$  & 1 & 0 & 1&1 & $\cdots$ & 1  \\
		$\Rel_{s+2}$& 1 & 1 & $\cdots$  & 1 & 1 & 0&1 & $\cdots$ & 1   \\
		$\Rel_{s+3}$& 1 & 1 & $\cdots$  & 1 & 1 & 1&0 & $\cdots$ & 1  \\
		$\vdots$ & $\vdots$ & $\vdots$ & $\ddots$  & $\vdots$ & $\vdots$ & $\vdots$&$\vdots$ & $\ddots$ & $\vdots$ \\
		$\Rel_n$& 1 & 1 & $\cdots$  & 1 & 1 & 1&1 & $\cdots$ & 0 
	\end{tabular}
\end{center}
	Let us take linear combinations of the above relations: $\tilde{\Rel}_j \coloneqq \Rel_0 - \Rel_j$ for $j=s+1, \dots, n$, and $\tilde{\Rel}_0 := \Rel_0 - \sum_{j=s+1}^n \tilde{\Rel}_j$. We obtain:
	\begin{center}
	\begin{tabular}{l | c c c c c c c c c}
		&$y$  &$\tilde{x}_2$  &  $\dots$  & $\tilde{x}_s$ &$\tilde{x}_{s+1}$ &$\tilde{x}_{s+2}$&$\tilde{x}_{s+3}$ &$\dots$& $\tilde{x}_n$ \\  \hline 
		$\tilde{\Rel}_0$& 1 & 1 & $\cdots$  & 1 & 0 & 0&0 & $\cdots$ & 0  \\
		$\tilde{\Rel}_{s+1}$& 0 & 0 & $\cdots$  & 0 & 1 & 0 & 0 & $\cdots$ & 0  \\
		$\tilde{\Rel}_{s+2}$& 0 & 0 &$\cdots$  & 0 & 0 & 1 & 0 & $\cdots$ & 0  \\
		$\tilde{\Rel}_{s+3}$& 0 & 0 &$\cdots$  & 0 & 0 & 0 & 1 & $\cdots$ & 0  \\
		$\vdots$& $\vdots$ & $\vdots$ & $\ddots$  & $\vdots$ & $\vdots$ & $\vdots$&$\vdots$ & $\ddots$ & $\vdots$  \\
		$\tilde{\Rel}_n$& 0 & 0 & $\cdots$  & 0 & 0 & 0 & 0 & $\cdots$ & 1  
	\end{tabular}
	\end{center}
The relation $\tilde{\Rel}_j$ expresses the monomial $\psi_1^{g-2-A} \prod_{i=2}^n \psi_i^{a_i+\delta_{ij}}$ as a linear combination of the generators with higher powers of $\psi_1$. The relation $\tilde{\Rel}_0$ expresses the monomial $\psi_1^{g-2-A}\prod_{i=2}^n \psi_i^{a_i} \kappa_1 $ as linear combination of the monomials $\psi_1^{g-2-A}\prod_{i=2}^n \psi_i^{a_i+\delta_{ij}}$, for $j = 2, \dots, s$ and generators with higher powers of $\psi_1$. In case no primary field $a_i$ is equal to zero (i.~e. $s=1$), any of the monomials $y, x_2, \dots, x_n$ can be expressed in terms of the generators with strictly bigger power of $\psi_1$. 

\subsubsection{Reduction algorithm}

Consider a monomial $\psi_1^{g-1 - \sum d_i} \psi_2^{d_2} \dots \psi_n^{d_n} $. Let $d_M$ be the maximal element in the list of the $d_i$'s with the lowest index. If $d_M \geq 2$, compute the relations $\tilde{\Rel}_j$ for the following vector of primary fields
\begin{align}
&\left(2g - \textstyle\frac 32 - \sum_{i=2}^n d_i, d_2, \dots,d_{M-1}, d_M -1,d_{M+1},\dots, d_n, \,d_{n+1} = \frac 3 2, \, d_{n+2} = -\frac 1 2 \right).
\end{align}
Since $d_M - 1 \geq 1$, we can use the relation $\tilde{\Rel}_{M}$ to express the monomial $ \psi_1^{g-1 - \sum d_i}\cdot \psi_2^{d_2} \cdots \psi_{n}^{d_{n}}$ as a linear combination of monomials with higher powers of $\psi_1$.

We are left to treat the vectors $\vec{d}$ with $d_i = 0$ or $1$, $i=2,\dots,n$. They correspond to the vertices of a unitary $(n-1)$-hypercube with non-negative coordinates. Let $s$ be the number of $d_i$'s equal to zero, so the remaining $(n-1- s)$ $d_i$'s are equal to one, $s=0,\dots,n-1$. Let us distinguish between the different cases in $s$.
\begin{itemize}
\item[$s=n-1$] In this case we have $\psi_1^{g-1}$, a generator. 
\item[$s=n-2$] In this case we have the remaining $n-1$ generators $\psi^{g-2}_1 \psi_i$ for $i=2, \dots, n$. 
\item[$1 \leq s \leq n-3$] This case can be treated as the case $s=0$ for some smaller $n$ discussed below. Let us argue by induction on $n$. For $n\leq 3$, the case $1 \leq s \leq n-3$ does not appear. Let us assume $n \geq 4$. We have at least one zero, so let us assume that $d_j = 0$. Let $\pi^{(1)}_j$ be the morphism that forgets the $j$-th marked point. If the monomial $\psi_1^{g-n+s} \psi_2^{d_2} \dots \hat{\psi_j} \dots \psi_n^{d_n} $ is expressed as linear combination of generators in $R^{g-1}(\MMM_{g, n-1})$ (the space where the point with the label $j$ is forgotten), then the pull-back of this relation via $\pi^{(1)}_j$ expresses $\psi_1^{g-n+s} \psi_2^{d_2} \dots \hat{\psi_j} \dots \psi_n^{d_n}$ as a linear combination of the pull-backs of the the $n-1$ generators of $R^{g-1}(\MMM_{g,n-1})$, $\psi_1^{g-1}$ and $\psi_1^{g-2}\psi_i$, $i\not=1,j$. To conclude we observe that $(\pi^{(1)}_j)^*\psi_1^{g-1}=\psi_1^{g-1}$ and $(\pi^{(1)}_j)^*\psi_1^{g-2}\psi_i=\psi_1^{g-2}\psi_i$, $i\not=1,j$ on the open moduli spaces. Note that
the same reasoning does not work in the case $s = n-2$ since the argument for $s=0$ below uses the assumption $n \geq 3$.
\end{itemize}

\subsubsection{The case \texorpdfstring{$s=0$}{s=0}} For $n \geq 3$, we show that the monomial $\psi_1^{g-n} \prod_{i=2}^n\psi_i^1 $ can be expressed in terms of the generators $\psi_1^{g-1}$, $\psi^{g-2}_1 \psi_i$, $i=2,\dots,n$, concluding this way the proof of proposition~\ref{prop:gen}.  

Let now $\vec{v}_k$ be the vector of primary fields
\begin{align}
\vec{v}_k := \Big( a_1 = 2g - \frac{n+2+k}{2},\underbrace{1, \dots, 1}_{k}, \underbrace{\frac 12, \dots \frac 12}_{n-1-k},  \,a_{n+1} = \frac 3 2, \,a_{n+2} = -\frac 1 2 \Big).
\end{align}
Similarly as before, let
\begin{align}
y &\coloneqq \psi_1^{g - n-1}\prod_{i=2}^n\psi_i^{1} \kappa_1\\
\tilde{x}_\ell &\coloneqq   -\frac{Q_{3}(5/2)}{Q_{1}(1)Q_2(3/2)} \psi_2^{1}  \dotsb  \psi_\ell^{2}  \dotsb  \psi_n^{1}\psi_1^{g-n-1} , \quad  \ell=2, \dots, n. 
\end{align}

Consider the monomials
$$
\psi_1^{g-n}\prod_{i=2}^n\psi_i \quad \text{ and } \quad \psi_1^{g-n} \prod_{i=2}^n \psi_i^{1-\delta_{i\ell}} \kappa_1, \qquad \ell=1,\dots,n-1.
$$
The relations we used in the cases $s\geq 1$ imply that the difference of any two of these  monomials is equal to a linear combination of the generators $\psi_1^{g-1}$, $\psi^{g-2}_1 \psi_i$, $i=2,\dots,n$. 
Let $c_0$ (respectively, $c_1$, $c_2$) be the sum of the coefficients of these monomials 
in the push-forwards of the relations $\Omega_{g,n+2}^{g+1}(\vec{v}_0)$ (respectively, $\Omega_{g,n+2}^{g+1}(\vec{v}_1)$, $\Omega_{g,n+2}^{g+1}(\vec{v}_2)$ ), and let $\hat c_i$ be the normalized coefficients that we get when we divide the relations by the coefficient of $y$. 
 
Now we can expand, in this special case, the system of linear relations collected in the matrix $M$ above. We have a new linear variable, $z\coloneqq \psi_1^{g-n}\prod_{i=2}^n\psi_i = \psi_1^{g-n} \prod_{i=2}^n \psi_i^{1-\delta_{i\ell}} \kappa_1$, $\ell=1,\dots,n-1$, and an extra linear relation $\Rel_*$ corresponding to the vector of primary fields $\vec v_2$. Since in this special case in these relations all the terms with the exponent of $\psi_1$ equal to $g-1-A$, $A=n-1$, are now identified with each other and collected in the variable $z$, these relations express $z,y,x_2,\dots,x_n$ in terms of the monomials proportional to $\psi_1^{g-A}$. The matrix of this system of relations reads:

\begin{center} 
	\begin{tabular}{c | c c c c c c c}
		& $z$ &$y$  & $\tilde{x}_2$  &  $\tilde{x}_3$  &  $\dots$   & $\tilde{x}_n$\\ \hline
		$\Rel_0$& $\hat c_0$ & 1 & 1 & 1  &  \dots & 1  \\
		$\Rel_{2}$ & $\hat c_1$ & 1 & 0 & 1  &  \dots & 1\\
		$\Rel_{3}$& $\hat c_1$ & 1 & 1 & 0  &  \dots  & 1 \\
		\vdots & \vdots & \vdots & \vdots  & \vdots  & $\ddots$  & \vdots \\
		$\Rel_n$& $\hat c_1$ & 1 & 1 & 1  & \dots & 0 \\
		$\Rel_*$ & $\hat c_2$ & 1 & 0& 0 &\dots & 1
	\end{tabular}
\end{center} 
 
This matrix is non-degenerate if and only if $\hat{c}_2 - 2\hat{c}_1 + \hat{c}_0 \neq 0.$ We prove this non-degeneracy in proposition~\ref{prop:nondegeneracy} in the next section. This completes the proof of proposition~\ref{prop:gen} and, as a corollary, theorem~\ref{thm:BSZ16}.

\section{Non-degeneracy of the matrix}\label{sec:non-degmatrix}

In this section we compute the sum of the coefficients of the monomials $\psi_1^{g-n}\prod_{i=2}^n\psi_i$ and $\psi_1^{g-n} \left(\prod_{i=12}^{n}\psi_i^{1-\delta_{i\ell}}\right) \kappa_1$, $\ell=2,\dots,n$ for the three particular sequences of the primary fields. Let us recall the notation.
We denote these sums of coefficients by
\begin{align*}
& c_0 \quad \text{for the primary fields}\quad  g-1 -n,\frac 12,\dots,\frac 12; \\
& c_1 \quad \text{for the primary fields}\quad  g-\frac 32 -n,1, \frac 12,\dots,\frac 12; \\
& c_2 \quad \text{for the primary fields}\quad   g-2 -n,1, 1, \frac 12,\dots,\frac 12.
\end{align*}
We denote the sequence of the primary fields by $a_1,\dots,a_n$. The primary fields at the two points that we forget are as usual $a_{n+1}=\frac 32$ and $a_{n+2}=-\frac 12$. For each $c_i$, $i=0,1,2$, we denote by $\hat c_i$ the normalized coefficient, namely, 
\begin{equation} \label{eq:chatdefinition}
\hat c_i:=c_i \cdot \left((Q_{g+1-n}(a_1)-Q_{g+1-n}(\textstyle{a_1-\frac12}))\prod_{i=2}^nQ_1(a_i)Q_2(\frac 32)\right)^{-1}, \qquad i=0,1,2,
\end{equation}
where the sequence of primary field is exactly the one used for the definition of the corresponding $c_i$, $i=0,1,2$. 

The goal is to prove the following non-degeneracy statement:
\begin{proposition}\label{prop:nondegeneracy} For any $g$ and $n$ satisfying $3\leq n\leq g-1$ we have $\hat c_0-2\hat c_1 + \hat c_2\not=0$.  
\end{proposition}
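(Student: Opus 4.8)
The plan is to reduce the non-degeneracy statement $\hat c_0-2\hat c_1+\hat c_2\neq 0$ to an explicit computation of the three normalized coefficients $\hat c_0,\hat c_1,\hat c_2$, and to extract from each the contribution of the relevant monomials appearing in the matrix of section~\ref{subsec:ngenerators}. First I would fix the combinatorial input: for each of the three sequences of primary fields listed above, I would run through the nine strata of figure~\ref{fig:chefigura} and identify precisely which decorated dual graphs push forward to a nonzero multiple of the monomials $\psi_1^{g-n}\prod_{i=2}^n\psi_i$ and $\psi_1^{g-n}\big(\prod_{i=2}^n\psi_i^{1-\delta_{i\ell}}\big)\kappa_1$. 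The key simplifications from lemma~\ref{lem:tautooffboundary} and the surrounding discussion carry over: since $Q_{>2}(\tfrac32)=0$ and $Q_{>0}(-\tfrac12)=0$, the exponents of $\psi_{n+1}$ and $\psi_{n+2}$ are forced to be at most $2$ and exactly $0$ respectively, and a $\kappa_1$-class in the pushforward arises only from a $\psi_{n+1}^2$-decoration. This pins down a short list of contributing graphs, essentially the analogues of~\subref{I},~\subref{II},~\subref{III},~\subref{V},~\subref{VI} in the present symmetric situation where all the $a_i$ are half-integers.

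Next I would assemble, for each $c_i$, the sum of the coefficients of all contributing graphs as a product of $Q$-values, using the explicit formula $Q_m(a)=\frac{(-1)^m}{2^m m!}\prod_{k=1}^{2m}\big(a+1-\tfrac k2\big)$. The normalization in equation~\eqref{eq:chatdefinition} divides out the common factor $\big(Q_{g+1-n}(a_1)-Q_{g+1-n}(a_1-\tfrac12)\big)\prod_{i=2}^nQ_1(a_i)Q_2(\tfrac32)$, so after dividing, each $\hat c_i$ becomes a manageable rational expression in $g$ and $n$. Because the three sequences of primary fields differ only by shifting two of the half-integer entries up to integer values ($\tfrac12\mapsto 1$) while compensating on $a_1$, I expect the $Q$-ratios governing the off-diagonal terms to simplify considerably, with the dependence on $g$ and $n$ entering through $a_1=2g-\tfrac{n+2+k}{2}$ and the degree $g+1-n$ of the large $\psi_1$-power.

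The heart of the argument is then the second difference $\hat c_0-2\hat c_1+\hat c_2$. The plan is to write this combination as a single rational function of $g$ and $n$ and show it does not vanish in the range $3\leq n\leq g-1$. I would expect the cleanest route to be factoring the numerator explicitly and checking that none of its factors can vanish for integers in that range, rather than estimating signs; the ratios of consecutive $Q$-polynomials telescope nicely because their zeros form the arithmetic progression $-\tfrac12,0,\tfrac12,1,\dots$.

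The hard part will be controlling the $\kappa_1$-contributions consistently across the three field choices: the identification $z=\psi_1^{g-n}\prod_{i=2}^n\psi_i=\psi_1^{g-n}\big(\prod_{i=2}^n\psi_i^{1-\delta_{i\ell}}\big)\kappa_1$ used to set up the matrix means that several a priori distinct graph contributions collapse into the single variable $z$, and one must be careful to sum the coefficients of all of them before forming $\hat c_i$. A secondary subtlety is the dilaton-type contribution from graph~\subref{VI}, where a $\psi$-class must sit on the genus-$0$ component; verifying that its coefficient enters $c_1$ and $c_2$ with the right multiplicities is where a sign or combinatorial factor could most easily go astray. Once these bookkeeping issues are settled, the non-vanishing of the explicit second difference should follow from a direct factorization argument.
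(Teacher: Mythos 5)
Your overall route coincides with the paper's: for primary fields in $\{\tfrac12,1\}$ one enumerates the contributing decorated strata (the paper packages this as lemma~\ref{lem:generalformula}), handles the genus-$0$ subtlety you correctly flag for graph~\subref{VI} (this is exactly lemma~\ref{lem:7terms}, where seven terms collapse to a single $Q_{d_1+2}(\alpha+\beta+\gamma)$), specializes to the three field choices, normalizes by \eqref{eq:chatdefinition}, and writes the second difference as a rational function of $g$ and $n$. Up to that point your plan is sound and matches the paper, including the bookkeeping of summing all graph contributions to the single variable $z$ before normalizing.

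The gap is in your concluding step. The paper finds
\[
\textstyle\frac 34 (\hat c_0 -2\hat c_1 + \hat c_2) = \frac{S(g,n)}{(g-\frac 12+\frac n2)(g-1+\frac n2)(g-\frac 32+\frac n2)(g-2+\frac n2)},
\qquad
S(g,n)=-g+\frac{11}8 n-\frac 94 g^2+\frac 98 gn -\frac 12 g^3+\frac 34 g^2n-\frac 14 n^3,
\]
and your proposal to "factor the numerator explicitly and check that none of its factors can vanish" cannot be carried out: $S$ is irreducible over $\mathbb{Q}$. Indeed, a factorization of a two-variable cubic must contain a linear factor $ag+bn+c$, and comparing coefficients of $-8S = 4g^3+18g^2-6g^2n-9gn+8g-11n+2n^3$ forces either an outright contradiction (when $c=0$) or the condition $11b^2=2c^2$ (when the constant term of the quadratic factor vanishes), which has no nonzero rational solutions. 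So there are no factors whose integer zeros you could inspect, and the telescoping of $Q$-ratios, while useful for computing the $\hat c_i$, gives no leverage on $S$ itself. The paper closes the argument with precisely the sign analysis you set aside: substituting $n=b+3$, $g=a+b+4$ identifies the range $3\leq n\leq g-1$ with integer points $a,b\geq 0$, and every nonzero coefficient of $S(a+b+4,b+3)=-\frac{201}8-\frac{173}8 a -\frac{21}2 b -6a^2-\frac{39}8 ab-\frac 98 b^2-\frac 12 a^3 - \frac 34 a^2 b$ is negative, so $S<0$ on the whole range. You would need to reinstate this (or an equivalent monotonicity/sign bound) to finish; as written, your final step fails.
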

We prove this proposition below, in subsection~\ref{subsec:proofnondegeneracy}, after we compute the coefficients $c_0$, $c_1$, and $c_2$ explicitly. 

\subsection{A general formula}

First, we prove a general formula for any set of primary fields $a_2,\dots,a_n\in \{\frac 12,1\}$.

\begin{lemma}\label{lem:generalformula} Let all $a_i$, $i=2,\dots, n$ be either $\frac 12$ or $1$. We have $a_1=2g-\frac 32-\sum_{i=2}^n a_i$. A general formula for the sum of the coefficients of the classes $\psi_1^{g-n}\prod_{i=2}^n\psi_i$ and $\psi_1^{g-n}\prod_{i=2}^n\psi_i^{1-\delta_{i\ell}}\kappa_1$, $\ell=1,\dots,n-1$, in the pushforward to $\MMM_{g,n}$ is given by
\begin{align} \label{eq:generalformula}
& \prod_{i=2}^n Q_1(a_i)\cdot \left[ 
(2g-2+n) Q_2\left(\textstyle{\frac 32}\right) Q_{g-n}(a_1) \phantom{\sum_{i=2}^n} \right. \\ \notag
& \phantom{\prod  Q_1(a_i)\cdot [] }+ (2g-2+n) Q_1\left(\textstyle{\frac 32}\right) \left(Q_{g+1-n}(a_1)-Q_{g+1-n}(a_1-\textstyle{\frac 12})\right) \\
& \phantom{\prod  Q_1(a_i)\cdot [] } +Q_{g+2-n}(a_1)-Q_{g+2-n}(a_1-\textstyle{\frac 12}) \\
& \phantom{\prod  Q_1(a_i)\cdot [] } +Q_{g+2-n}(a_1+1)-Q_{g+2-n}(a_1+\textstyle{\frac 32}) \\
& \phantom{\prod  Q_1(a_i)\cdot []} + \left(Q_1\left(\textstyle{\frac 32}\right)-Q_1(1)\right)Q_{g+1-n}(a_1) \\
& + \sum_{\ell=2}^n \frac{ \left( Q_2\left(\frac 32 \right) \left( Q_1(a_\ell) -Q_1(a_\ell-\frac 12) \right) \right)Q_{g-1}(a_1)} {Q_1(a_\ell)} \\
& + \sum_{\ell=2}^n \frac{ \left( Q_3(a_\ell+1) - Q_3(a_\ell+\frac 32) \right)Q_{g-n}(a_1)} {Q_1(a_\ell)} \\
& \left. 
+ \sum_{\ell=2}^n \frac{ \left( Q_2\left(\frac 32 \right)Q_0(a_\ell) - Q_2(a_\ell+\frac 32)\right) \left( Q_{g+1-n}(a_1) -Q_{g+1-n}(a_1-\frac 12) \right)} {Q_1(a_\ell)} 
 \right].
\end{align}	
\end{lemma}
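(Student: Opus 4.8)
The plan is to compute the pushforward $\pi^{(2)}_*\Omega^{g+1}_{g,n+2}(\vec a)$ stratum by stratum, using the explicit description of the relation as a decorated sum over the nine graphs of figure~\ref{fig:chefigura}, and to collect the coefficients of the two families of target monomials $\psi_1^{g-n}\prod_{i=2}^n\psi_i$ and $\psi_1^{g-n}\prod_{i=2}^n\psi_i^{1-\delta_{i\ell}}\kappa_1$. This is the same bookkeeping as in the proof of lemma~\ref{lem:tautooffboundary}, but now carried to completion rather than used only to isolate a single monomial. First I would record the structural simplifications coming from $r=\frac12$ and from the choice of primary fields: the specialization $a_{n+2}=-\frac12$ forces the exponent of $\psi_{n+2}$ to be $0$ since $Q_m(-\frac12)=0$ for $m\geq1$; the specialization $a_{n+1}=\frac32$ bounds the exponent of $\psi_{n+1}$ by $2$ since $Q_m(\frac32)=0$ for $m\geq3$; and each leaf $i\in\{2,\dots,n\}$ carries $\psi_i^{d_i}Q_{d_i}(a_i)$ with $a_i\in\{\frac12,1\}$, so that $d_i\leq1$. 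There are no dilaton leaves and hence no spurious $\kappa$-classes.

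Two pushforward mechanisms produce the answer. First, a point $n+1$ sitting on the genus-$g$ vertex and carrying $\psi_{n+1}^{k+1}$ contributes $\kappa_k$ after being forgotten; here $\kappa_0=2g-2+n$ accounts for the numerical factor $(2g-2+n)$, while $\kappa_1$ (which, as noted above, can only arise from $\psi_{n+1}^2$) produces the targets bearing a $\kappa_1$-class. When instead $n+1$ and $n+2$ lie on the same component, the comparison between $\psi$-classes and their pullbacks replaces $\psi_{n+1}^2$ by $\psi_{n+1}$ before the second point is forgotten, i.e. $\pi_{n+2,*}(\psi_{n+1}^2)=\psi_{n+1}$ and then $\pi_{n+1,*}(\psi_{n+1})=\kappa_0=2g-2+n$, again yielding that numerical factor. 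Second, whenever a marked point shares a genus-$0$ bubble with $n+1$ or $n+2$, contracting the bubble shifts the relevant primary field by $a_{n+1}=\frac32$ or $a_{n+2}=-\frac12$, and the edge factor contributes a $Q$ with its index raised by one together with a sign. I would then match each of the eight summands in the bracket of equation~\eqref{eq:generalformula} to a specific decoration.

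Concretely, the summands with no $\frac{1}{Q_1(a_\ell)}$ factor come from the decorations in which point $1$ is the special leaf: either on the genus-$g$ vertex, as in graph~\subref{I}, or sharing a bubble with $n+2$, as in graph~\subref{II} with $j=1$. These two cases differ by the shift $a_1\mapsto a_1-\frac12$ and an opposite sign, which is exactly the origin of the characteristic differences $Q_{g+1-n}(a_1)-Q_{g+1-n}(a_1-\frac12)$ and the $Q_{g+2-n}$ differences; the indices $g-n$, $g+1-n$, $g+2-n$ record whether $\psi_{n+1}$ carries $2$ or $1$, whether the target bears a $\kappa_1$, and the index shift from the edge factor. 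The summands carrying $\frac{1}{Q_1(a_\ell)}$ come from the decorations in which a leaf $\ell\in\{2,\dots,n\}$ is special and hence sits on a bubble together with $n+1$ and/or $n+2$ (graphs~\subref{III} and~\subref{V}); there the usual factor $Q_1(a_\ell)$ is replaced by the shifted expressions $Q_3(a_\ell+\frac32)$, $Q_2(a_\ell+\frac32)$, $Q_1(a_\ell-\frac12)$, which is precisely why the common prefactor $\prod_{i=2}^n Q_1(a_i)$ appears divided by $Q_1(a_\ell)$.

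Finally I would verify that all remaining decorations contribute only to monomials that are not among the targets. In particular, every configuration coming from the two-bubble graphs~\subref{VII}, \subref{VIII}, \subref{IX}, as well as every decoration in which the exponent of $\psi_1$ exceeds $g-n$, pushes forward into monomials divisible by a higher power of $\psi_1$ (or into other non-target classes), exactly as in the final paragraph of the proof of lemma~\ref{lem:tautooffboundary}. I expect the main obstacle to be precisely this exhaustive case analysis of the two-bubble strata together with the forgetful-map comparison between $\psi$-classes and their pullbacks: it is these comparison corrections that generate the $Q(a)-Q(a-\tfrac12)$ differences, and one must ensure that every bubble contraction is counted exactly once and that no surviving contribution to a target monomial is overlooked. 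Summing the surviving contributions and factoring out $\prod_{i=2}^n Q_1(a_i)$ then yields the bracketed expression in equation~\eqref{eq:generalformula}.
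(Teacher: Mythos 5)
Your overall frame --- a stratum-by-stratum analysis of $\pi^{(2)}_*\Omega^{g+1}_{g,n+2}(\vec a)$ over the graphs of figure~\ref{fig:chefigura}, using $Q_{\geq 1}(-\frac12)=0$, $Q_{\geq 3}(\frac32)=0$, $Q_{\geq 2}(a_i)=0$ to bound the decorations, and the $\kappa_0=2g-2+n$, $\kappa_1$ mechanism for the forgotten points --- is exactly the paper's. But your final verification step makes two claims that are false, and they sit precisely where the hard content of the lemma lies. First, you assert that the two-bubble graphs~\subref{VII}, \subref{VIII}, \subref{IX} contribute only to non-target monomials. They do not: for instance $D_{1,n+1,n+2}D_{1,n+1}$ decorated with $\pi^*(\psi_1^{g-n})\prod_{i=2}^n\psi_i$ has edge coefficient proportional to $Q_{g+1-n}(a_1)\,Q_1(a_1+\frac32)\neq 0$, and since this codimension-two stratum maps isomorphically onto $\oM_{g,n}$ under $\pi^{(2)}$, it pushes forward to a nonzero multiple of the target $\psi_1^{g-n}\prod_{i=2}^n\psi_i$. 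The paper handles these via a separate cancellation lemma (lemma~\ref{lem:7terms}): the four $\psi$-decorations $\psi_0,\psi_1,\psi_{n+1},\psi_{n+2}$ on $D_{1,n+1,n+2}$ (graph~\subref{VI}) and the three two-bubble strata are summed, six cross terms cancel by a quadratic identity together with $Q_1(-\frac12-x)=Q_1(x)$, and the combination collapses to the single coefficient $Q_{g+2-n}(a_1+1)$ --- the first half of the fourth line of equation~\eqref{eq:generalformula}, with the analogous $\ell$-version producing the seventh line. Your proposal contains no counterpart of this cancellation, so those lines are unreachable by your argument.

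Second, you claim that every decoration whose $\psi_1$-exponent exceeds $g-n$ pushes forward into monomials divisible by a higher power of $\psi_1$. That ``modulo higher powers of $\psi_1$'' logic is borrowed from lemma~\ref{lem:tautooffboundary}, but it is not available here: the present lemma asks for the \emph{exact} coefficients, and the forgetful pushforwards lower exponents, e.g. $\pi'_*(\psi_1^{g+1-n}\prod_{i=2}^{n+1}\psi_i)$ contains $\psi_1^{g-n}\prod_{i=2}^n\psi_i$ with a string-type correction. The classes $\psi_1^{g+1-n}\prod_{i=2}^{n+1}\psi_i$, $\psi_1^{g+2-n}\prod_{i=2}^n\psi_i$, $\psi_1^{g+1-n}\prod_{i=2}^{n}\psi_i\psi_{n+1}$, and their boundary partners are precisely the source of the terms $Q_{g+1-n}(a_1)$, $Q_{g+2-n}(a_1)$, $Q_{g+2-n}(a_1-\frac12)$ in the second, third, and fifth lines of equation~\eqref{eq:generalformula}, as well as of part of the last line. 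Discarding them, as your last paragraph does, would lose most of the formula. So while your strategy and the local mechanisms you describe are the right ones, the proof as proposed does not close: the missing ingredients are (i) the seven-term cancellation of lemma~\ref{lem:7terms} for the $D_{\cdot,n+1,n+2}$ strata, and (ii) exact bookkeeping of the decorations with $\psi_1$-exponent $g+1-n$ and $g+2-n$ through $\pi'_*$ and $\pi''_*$.
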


\begin{proof}
The proof of this lemma is based on the analysis of all possible strata in $\overline{\mathcal{M}}_{g,n+2}$ equipped with all possible monomials of $\psi$-classes that can potentially contribute non-trivially to $\psi_1^{g-n}\prod_{i=2}^n\psi_i$ and $\psi_1^{g-n}\prod_{i=2}^n\psi_i^{1-\delta_{i\ell}}\kappa_1$, $\ell=1,\dotsc,n$, under the pushforward. Note that we do not have to consider $\kappa$-classes on the strata in $\overline{\mathcal{M}}_{g,n+2}$ since the choice $r=\frac 12$ guarantees that there are no terms with $\kappa$-classes in the Pandharipande-Pixton-Zvonkine relations. 

Recall that we denote by $D_I$, $I\subset\{1,\dots,n+2\}$, the divisor in $\oM_{g,n+2}$ whose generic point is represented by a two-component curve, with components of genus $g$ and $0$ connected through a node, such that all points with labels in $I$ lie on the component of genus $0$, and all other points lie on the component of genus $g$. In this case we denote by $\psi_0$ the $\psi$-class corresponding to the node on the genus $0$ component.

We denote by $\pi'\colon \oM_{g,n+2} \to \oM_{g,n+1}$ the map forgetting the marked point labeled by $n+2$, by $\pi''\colon \oM_{g,n+1} \to \oM_{g,n}$ the map forgetting the marked point labeled by $n+1$, and by \( \pi \) their composition \( \pi = \pi'' \circ \pi'\). Note that $\pi'_*(\prod_{i=1}^{n+1} \psi_i^{d_i})=\sum_{j:d_j>0} \prod_{i=1}^{n+1} \psi_i^{d_i-\delta_{ij}}$, so, since in order to compute $\pi_*$ we always first apply $\pi'_*$, we typically mention below the degree of which $\psi$-class is reduced. The same we do also for $\pi''_*$ in the relevant cases.

Let us now go through the full list of possible non-trivial contributions. 
\begin{itemize}
	\item The pushforward of the class $\psi_1^{g-n}\prod_{i=2}^n \psi_i^1  \psi_{n+1}^2$ contains $\psi_1^{g-n}\prod_{i=2}^n \psi_i^1 $ with coefficient $(2g-2+n)\prod_{i=2}^n Q_1(a_i)Q_{g-n}(a_n) Q_2(\frac 32)$. This explains the first line of equation~\eqref{eq:generalformula}. It also contains the terms $\psi_1^{g-n} \prod_{i=2}^{n}\psi_i^{1-\delta_{i\ell}} \kappa_1$, $\ell=2,\dots,n$, with coefficient $\prod_{i=2}^n Q_1(a_i)Q_{g-n}(a_1) Q_2(\frac 32)$.
	\item The pushforward of the class $\psi_1^{g-n}\prod_{i=2}^n \psi_i^{1-\delta_{i\ell}}  D_{\ell,n+2} \psi_{n+1}^2$ also gives the term $\psi_1^{g-n} \prod_{i=2}^{n-1}\psi_i^{1-\delta_{i\ell}}\kappa_1$, with coefficient $-\prod_{i=2}^n Q_1(a_i-\frac{\delta_{i\ell}}2)Q_{g-n}(a_1) Q_2(\frac 32)$. The sum over $\ell$ of this and the previous coefficient is equal to the sixth line of equation~\eqref{eq:generalformula}.
	\item The pushforward of the class $\psi_1^{g+1-n}\prod_{i=2}^{n+1} \psi_i^1 $, where at the first step the map $\pi'_*$ decreases the degree of $\psi_1$, gives $\psi_1^{g-n} \prod_{i=1}^n \psi_i^1 $ with coefficient $(2g-2+n)\prod_{i=2}^n Q_1(a_i)Q_{g+1-n}(a_1) Q_1(\frac 32)$. 
	\item The push-forward of the class $(\pi')^*(\psi_1^{g-n})\prod_{i=2}^{n+1} \psi_i^1 D_{1,n+2}$ gives $\psi_1^{g-n}\prod_{i=2}^n \psi_i^1 $ with coefficient $-(2g-2+n)\prod_{i=2}^n Q_1(a_i)Q_{g+1-n}(a_1-\frac 12) Q_1(\frac 32)$. The sum of this and the previous coefficient is equal to the second line of equation~\eqref{eq:generalformula}.
	\item The pushforward of the class $\psi_1^{g+2-n}\prod_{i=2}^{n} \psi_i^1 $, where both $\pi'_*$ and $\pi''_*$ decrease the degree of $\psi_1$, gives $\psi_1^{g-n} \prod_{i=2}^n \psi_i^1 $ with coefficient $\prod_{i=1}^n Q_1(a_i)Q_{g+2-n}(a_1)$. 
	\item The pushforward of the class $(\pi')^*(\psi_1^{g+1-n})\prod_{i=2}^{n} \psi_i^1 D_{1,n+2} $, where the map $\pi''_*$ decreases the degree of $\psi_1$, gives $\psi_1^{g-n}\prod_{i=2}^n \psi_i^1 $ with coefficient $-\prod_{i=2}^n Q_1(a_i)\cdot Q_{g+2-n}(a_1-\frac 12)$. The sum of this and the previous coefficient is equal to the third line of equation~\eqref{eq:generalformula}.
	\item The pushforward of the class $(\pi'')^*(\psi_1^{g+1-n}) \prod_{i=2}^{n} \psi_i^1 D_{1,n+1} $, where at the first step the map $\pi'_*$ decreases the degree of $(\pi'')^*\psi_1$, gives $\psi_1^{g-n}\prod_{i=2}^n \psi_i^1 $ with coefficient $-\prod_{i=2}^n Q_1(a_i)Q_{g+2-n}(a_1+\frac 32)$. 
	\item Consider the following seven cases together:
	$\pi^*(\psi_1^{g-n}) \prod_{i=2}^{n} \psi_i^1 D_{1,n+1,n+2} \cdot(\psi_0+\psi_1+\psi_{n+1}+\psi_{n+2})$ and
	$\pi^*(\psi_1^{g-n})\prod_{i=2}^{n} \psi_i^1 D_{1,n+1,n+2}(D_{1,n+1}+D_{1,n+2}+D_{n+1,n+2})$. By lemma~\ref{lem:7terms} below, the total sum of their pushforwards is equal to $\psi_1^{g-n} \prod_{i=2}^n \psi_i $ with coefficient $\prod_{i=2}^n Q_1(a_i)Q_{g+2-n}(a_1-\frac 12+\frac 32)$. The sum of this and the previous coefficient is equal to the fourth line in equation~\eqref{eq:generalformula}.
	\item The pushforward of the class $\psi_1^{g-n} \prod_{i=2}^{n} \psi_i^{1-\delta_{i\ell}} D_{\ell,n+1} (\pi'')^*\psi_\ell^1$, where at the first step the map $\pi'_*$ decreases the degree of $(\pi'')^*\psi_\ell$, gives $\psi_1^{g-n}\prod_{i=2}^n \psi_i^1 $ with coefficient $-\prod_{i=2}^n Q_{1+2\delta_{i\ell}}(a_i+\frac{3\delta_{i\ell}}2)Q_{g-n}(a_1)$. 
	\item Consider the following seven cases together:
	$\psi_1^{g-n}\prod_{i=2}^{n} \psi_i^{1-\delta_{i\ell}} D_{\ell,n+1,n+2}\pi^*\psi_\ell^{1} \cdot(\psi_0+\psi_\ell+\psi_{n+1}+\psi_{n+2})$ and
	$\psi_1^{g-n}\prod_{i=2}^{n} \psi_i^{1-\delta_{i\ell}} D_{\ell,n+1,n+2} \cdot (D_{1,n+1}+D_{1,n+2}+D_{n+1,n+2}) \cdot \pi^*\psi_\ell^1$. By lemma~\ref{lem:7terms} below, the total sum of their pushforwards is equal to $ \psi_1^{g-n} \prod_{i=1}^n \psi_i  $ with coefficient $\prod_{i=1}^n Q_{1+2\delta_{i\ell}}(a_i+\delta_{i\ell})Q_{g-n}(a_1)$. Note that this coefficient is always equal to zero, since $Q_3(2)=Q_3(\frac 32)=0$, but we included this term here and in equation~\eqref{eq:generalformula} in any case in order to make the whole formula more transparent and homogeneous. The sum of this and the  previous coefficient is equal to the seventh line in equation~\eqref{eq:generalformula}. 
	\item The pushforward of the class $\psi_1^{g+1-n}\prod_{i=2}^{n-1} \psi_i^1 \psi_{n+1}^1$, where, first, the map $\pi'_*$ decreases the degree of $\psi_{n+1}$, so it becomes zero, and then the map $\pi''_*$ decreases the degree of $\psi_1$, giving $\psi_1^{g-n}\prod_{i=2}^n \psi_i^1 $ with coefficient $\prod_{i=2}^n Q_1(a_i)Q_{g+1-n}(a_1) Q_1(\frac 32)$. 
	\item The pushforward of the class $\psi_1^{g+1-n}\prod_{i=2}^{n-1} \psi_i^1  D_{n+1,n+2}$, where the map $\pi''_*$ decreases the degree of $\psi_{1}$, gives $\psi_1^{g-n} \prod_{i=2}^n \psi_i^1 $ with coefficient 
	$-\prod_{i=2}^n Q_1(a_i)\cdot Q_{g+1-n}(a_1) Q_1(\frac 32-\frac 12).$
	The sum over $\ell$ of this and the previous coefficient is equal to the fifth line of equation~\eqref{eq:generalformula}.
	\item The push-forward of the class  $\psi_1^{g+1-n}\prod_{i=2}^{n-1} \psi_i^{1-\delta_{i\ell}}\psi_{n+1}^2$, where at the first step the map $\pi'_*$ and decreases the degree of $\psi_1$, gives $ \psi_1^{g-n} \prod_{i=2}^{n-1}\psi_i^{1-\delta_{i\ell}}\kappa_1$ with the coefficient $\prod_{i=2}^n Q_{1-\delta_{i\ell}}(a_i)Q_{g+1-n}(a_1) Q_2(\frac 32)$.
	\item The pushforward of the class  $(\pi')^*(\psi_1^{g-n})\prod_{i=2}^{n-1} \psi_i^{1-\delta_{i\ell}}D_{1,n+2}\psi_{n+1}^2$ gives the term  $\psi_1^{g-n} \prod_{i=2}^{n-1}\psi_i^{1-\delta_{i\ell}}\cdot \kappa_1$ with coefficient $-\prod_{i=2}^n Q_{1-\delta_{i\ell}}(a_i)Q_{g+1-n}(a_1-\frac 12) Q_2(\frac 32)$.
	\item The pushforward of the class $\psi_1^{g+1-n}\prod_{i=2}^{n-1} \psi_i^{1-\delta_{i\ell}}D_{\ell,n+1}(\pi'')^*\psi_\ell^1 $, where at the first step the map $\pi'_*$ decreases the degree of $\psi_1$, gives $\psi_1^{g-n}\prod_{i=2}^n \psi_i^1 $ with coefficient $-\prod_{i=2}^n Q_{1-\delta_{i\ell}}(a_i)Q_{g+1-n}(a_1) Q_2(a_\ell+\frac 32)$.
	\item The pushforward of the class $(\pi')^*(\psi_1^{g-n})\prod_{i=2}^{n-1} \psi_i^{1-\delta_{i\ell}}D_{\ell,n+1}(\pi'')^*\psi_\ell^1 D_{1,n+2}$ gives $\psi_1^{g-n} \prod_{i=2}^n \psi_i^1 $ with the coefficient $\prod_{i=2}^n Q_{1-\delta_{i\ell}}(a_i)Q_{g+1-n}(a_1-\frac 12) Q_2(a_\ell+\frac 32)$. The sum over $\ell$ of this and the previous three coefficients is equal to the eighth line in equation~\eqref{eq:generalformula}.
\end{itemize}

Thus we have explained how we obtain all terms in equation~\eqref{eq:generalformula}. Note that since $Q_{\geq 1}(-\frac 12)=0$, we can never have a non-trivial degree of $\psi_{n+2}$ in our formulae. For the same reason, the degree of $\psi_2,\dots,\psi_n$ is bounded from above by $1$ and the degree of $\psi_{n+1}$ is bounded from above by $2$. With this type of reasoning it is easy to see by direct inspection that all other classes of degree $g+1$ do not contain any of the monomials $\psi_1^{g-n}\prod_{i=2}^{n-1}\psi_i$ and $\psi_1^{g-n}\prod_{i=2}^{n-1}\psi_i^{1-\delta_{i\ell}}\kappa_1$, $\ell=2,\dots,n$, with non-trivial coefficients in their push-forwards to $\MMM_{g,n}$. For instance, for an arbitrary $a_\ell$ the class $(\pi'')^*(\psi_1^{g-n})\prod_{i=2}^{n} \psi_i^{1-\delta_{i\ell}}D_{\ell,n+2}(\pi')^*\psi_\ell^1 D_{1,n+1}$ gives as result $\psi_1^{g-n}\prod_{i=2}^n \psi_i^1 $ with the coefficient $\prod_{i=2}^n Q_{1-\delta_{i\ell}}(a_i)Q_{g+1-n}(a_1+\frac 32) Q_2(a_\ell-\frac 12)$. But since $a_\ell$ is either $\frac 12$ or $1$ and $Q_2(0)=Q_2(\frac 12)=0$, this coefficient is equal to zero. 
\end{proof}

\begin{lemma}
	\label{lem:7terms}
	Let the points $1$, $n+1$,  and $n+2$ have arbitrary primary fields $\alpha$, $\beta$, and $\gamma$. Then the pushforward of the part of the class given by
	\begin{align}
	\prod_{i=2}^n \psi_i^{d_i} & \left[  D_{1,n+1,n+2}\pi^*\psi_1^{d_1}(\psi_0+\psi_1+\psi_{n+1}+\psi_{n+2})\right.\\
	& \left. +D_{1,n+1,n+2}(D_{1,n+1} +D_{1,n+2}+D_{n+1,n+2})\pi^*\psi_1^{d_1}\right].
	\end{align}
	is equal to $\prod_{i=1}^{n} \psi_i^{d_i}$ with the coefficient $\prod_{i=2}^n Q_{d_i}(a_i) Q_{d_1+2}(\alpha+\beta+\gamma)$.
\end{lemma}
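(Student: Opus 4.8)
The plan is to push the computation onto the genus-zero bubble and then to reduce the problem to an identity for the $Q$-polynomials. First I would use that all seven terms are supported on the boundary divisor $D\coloneqq D_{1,n+1,n+2}$, the image of the gluing map $\oM_{g,\{2,\dots,n,\star\}}\times\oM_{0,\{1,n+1,n+2,\bullet\}}\to\oM_{g,n+2}$, where $\star$ and $\bullet$ denote the two branches of the node. The factor $\prod_{i=2}^n\psi_i^{d_i}$ (carrying the spectator decorations $\prod_{i=2}^nQ_{d_i}(a_i)$) lives on the genus-$g$ vertex, and the factor $\pi^*\psi_1^{d_1}$ is pulled back from $\oM_{g,n}$; both are constant along the one-dimensional fibres $\oM_{0,4}$ of $\pi|_D$, so by the projection formula they factor out of $\pi_*$ and account for the monomial $\psi_1^{d_1}\prod_{i=2}^n\psi_i^{d_i}$ together with the coefficient $\prod_{i=2}^nQ_{d_i}(a_i)$. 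This isolates the genuine content of the lemma as the pushforward of the decorated genus-zero bubble, whose three leaves carry the primary fields $\alpha,\beta,\gamma$ and whose node carries the $R$-matrix propagator.

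Next I would evaluate this bubble contribution by iterating $\pi=\pi''\circ\pi'$ and using the forgetful-map calculus: the comparison $\psi_i=\pi'^*\psi_i+D_{i,n+2}$, the self-intersection $D^2=-(\psi_\bullet+\psi_\star)\cdot D$, the section identity $\pi'_*(D_{i,n+2}\cdot\pi'^*\beta)=\beta$, and the vanishing of $\psi$-classes on three-pointed rational components. Since $\oM_{0,4}$ is one-dimensional, each of $\psi_0,\psi_1,\psi_{n+1},\psi_{n+2}$ and each of the three boundary divisors $D_{1,n+1},D_{1,n+2},D_{n+1,n+2}$ restricts to the point class on the bubble, and the role of the three divisorial terms is precisely to cancel the boundary corrections $D_{i,n+2}$ produced when the four $\psi$-classes are rewritten in terms of classes pulled back along $\pi'$ and $\pi''$. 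The seven terms are therefore exactly the complete list of configurations whose pushforward lands on the target monomial, with the point $1$ transported to the node position on the main component; all other decorated strata either miss this monomial or vanish for dimension reasons.

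The step I expect to be the main obstacle is the final algebraic collapse. Once the node propagator $\bigl(\eta-\sum_{m',m''\ge1}R^{-1}_{m'}\eta R^{-1}_{m''}(\psi')^{m'}(\psi'')^{m''}\bigr)/(\psi'+\psi'')$ is expanded at $r=\tfrac12$ and convolved with the leaf decorations $\sum_mQ_m(\alpha)\psi_1^m$, $\sum_mQ_m(\beta)\psi_{n+1}^m$, $\sum_mQ_m(\gamma)\psi_{n+2}^m$ and with the seven $\oM_{0,4}$-integrals above, the resulting sum over the internal $\psi$-powers must collapse to the single evaluation $Q_{d_1+2}(\alpha+\beta+\gamma)$, with the index shifted by $2$ and the three fields fused additively. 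I would prove this by induction, showing that the assembled expression satisfies the defining recursion $Q_m(a)-Q_m(a-1)=\tfrac12\bigl(m-\tfrac12-2a\bigr)Q_{m-1}(a-1)$ together with the initial values $Q_0\equiv1$ and $Q_m(0)=Q_m(-\tfrac12)=\delta_{m,0}$ established above; alternatively one can fuse two fields at a time and recognise a Vandermonde-type convolution directly from the explicit product formula for $Q_m$. The delicate point throughout is to keep track of which monomials survive the genus-zero pushforward, so that the $r$-spin fusion rule forces the additive combination $\alpha+\beta+\gamma$ rather than an uncontrolled triple sum.
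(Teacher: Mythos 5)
Your reduction step is fine: by the projection formula the spectator decorations $\prod_{i=2}^nQ_{d_i}(a_i)\psi_i^{d_i}$ and $\pi^*\psi_1^{d_1}$ factor out, and since the fibre of $\pi$ over the divisor $D_{1,n+1,n+2}$ is the one-dimensional space $\oM_{0,4}$, each of the seven decorations $\psi_0,\psi_1,\psi_{n+1},\psi_{n+2},D_{1,n+1},D_{1,n+2},D_{n+1,n+2}$ is a point class on the fibre and pushes forward with coefficient $1$. But this very observation undercuts the rest of your plan. Because the bubble is one-dimensional, \emph{all} degrees are pinned: the genus-$g$ branch of the node carries exactly $(\psi')^{d_1}$ and the bubble carries exactly one unit of degree. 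There is no ``sum over the internal $\psi$-powers'' left to collapse, so there is no Vandermonde-type convolution and nothing to prove by induction on the recursion $Q_m(a)-Q_m(a-1)=\tfrac12(m-\tfrac12-2a)Q_{m-1}(a-1)$. Relatedly, your reading of the three divisorial terms as serving ``to cancel the boundary corrections $D_{i,n+2}$'' in forgetful-map calculus is not what happens: all seven classes push forward to the same monomial, each with its own Givental coefficient, and the entire content of the lemma is a finite identity among those seven coefficients --- which your proposal never writes down. That is the genuine gap.

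What the paper does instead: it reads off the Givental coefficients of the seven configurations. The $\psi_0$-term carries $(R^{-1}_{d_1+2})^{\alpha+\beta+\gamma-d_1-2}_{\alpha+\beta+\gamma}-(R^{-1}_{d_1+1})(R^{-1}_{1})^{r-2-(\alpha+\beta+\gamma)}_{r-1-(\alpha+\beta+\gamma)}$; each $\psi$-leaf term carries $-(R^{-1}_{d_1+1})(R^{-1}_1)$ with argument $\alpha$, $\beta$, or $\gamma$; each divisor term carries $+(R^{-1}_{d_1+1})(R^{-1}_1)$ with argument $\alpha+\beta$, $\alpha+\gamma$, or $\beta+\gamma$. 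At $r=\tfrac12$ the leading entry is \emph{by definition} $Q_{d_1+2}(\alpha+\beta+\gamma)$ --- the additive argument is forced by the mod-$(r-1)$ selection rule at the genus-zero vertex, with no fusion computation required --- and the seven remaining cross-terms, all proportional to $Q_{d_1+1}(\cdot)$, cancel because of the elementary quadratic identity
\begin{equation}
Q_1\big(-\tfrac12-\alpha-\beta-\gamma\big)+Q_1(\alpha)+Q_1(\beta)+Q_1(\gamma)
-Q_1(\alpha+\beta)-Q_1(\alpha+\gamma)-Q_1(\beta+\gamma)=0,
\end{equation}
which follows from $Q_1(x)=-\tfrac12\,x\big(x+\tfrac12\big)$ and the symmetry $Q_1(-\tfrac12-x)=Q_1(x)$. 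So the mechanism is a one-step cancellation between the $\psi$-type and divisor-type coefficients, not an inductive collapse; without identifying the explicit coefficients, your proposed final step cannot get started, and with them in hand no induction is needed.
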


\begin{proof}
	Indeed, the Givental formula for the deformed $r$-spin class (for a general $r$) in this case implies that these seven summands have the following coefficients, up to a common factor:
	\begin{align}
	\psi_0:\qquad & (R^{-1}_{d_1+2})^{\alpha+\beta+\gamma-d_1-2}_{\alpha+\beta+\gamma}-
	(R^{-1}_{d_1+1})^{\alpha+\beta+\gamma-d_1-2}_{\alpha+\beta+\gamma-1}
	(R^{-1}_{1})^{r-2-(\alpha+\beta+\gamma)}_{r-1-(\alpha+\beta+\gamma)}
	\\
	\psi_1:\qquad & -(R^{-1}_{d_1+1})^{\alpha+\beta+\gamma-d_1-2}_{\alpha+\beta+\gamma-1}
(R^{-1}_{1})^{\alpha-1}_{\alpha}
	\\
\psi_{n+1}:\qquad & -(R^{-1}_{d_1+1})^{\alpha+\beta+\gamma-d_1-2}_{\alpha+\beta+\gamma-1}
(R^{-1}_{1})^{\beta-1}_{\beta}
	\\
\psi_{n+2}:\qquad & -(R^{-1}_{d_1+1})^{\alpha+\beta+\gamma-d_1-2}_{\alpha+\beta+\gamma-1}
(R^{-1}_{1})^{\gamma-1}_{\gamma} \\
D_{1,n+1}: \qquad & (R^{-1}_{d_1+1})^{\alpha+\beta+\gamma-d_1-2}_{\alpha+\beta+\gamma-1}(R^{-1}_1)_{\alpha+\beta}^{\alpha+\beta-1} \\
D_{1,n+2}: \qquad & (R^{-1}_{d_1+1})^{\alpha+\beta+\gamma-d_1-2}_{\alpha+\beta+\gamma-1}(R^{-1}_1)_{\alpha+\gamma}^{\alpha+\gamma-1}\\
D_{n+1,n+2}: \qquad & (R^{-1}_{d_1+1})^{\alpha+\beta+\gamma-d_1-2}_{\alpha+\beta+\gamma-1}(R^{-1}_1)_{\gamma+\beta}^{\gamma+\beta-1}
	\end{align}
	(in addition to a common factor on the right hand side we also omit the common factor $\pi^*(\psi_1^{d_1})\prod_{i=2}^n \psi_i^{d_i} D_{1,n+1,n+2}$ on the left hand side of this table). 
	
	The first term above, $(R^{-1}_{d_n+2})^{\alpha+\beta+\gamma-d_1-2}_{\alpha+\beta+\gamma}$, after the substitution $r=\frac 12$ gives us the factor $Q_{d_1+2}(\alpha+\beta+\gamma)$, and times the common factor of $\prod_{i=2}^n Q_{d_i}(a_i)$ it is exactly the results we state in the lemma.  We have to show that the other seven terms sum up to zero. Indeed, the other seven terms, after substitution $r=\frac 12$, are proportional to 
	\begin{align}
	& Q_1(\textstyle{-\frac 12}-\alpha-\beta-\gamma)+Q_1(\alpha)+Q_1(\beta)+Q_1(\gamma) \\
	& -Q_1(\alpha+\beta)-Q_1(\alpha+\gamma)-Q_1(\gamma+\beta)
	\end{align}
	Note that $Q_1(-\frac 12-x)=Q_1(x)$, so the expression above is proportional to 
	\begin{align}
	 (\textstyle{\frac 12}+\alpha+\beta+\gamma)(\alpha+\beta+\gamma)+
	(\textstyle{\frac 12}+\alpha)(\alpha)+
	(\textstyle{\frac 12}+\beta)(\beta)+
	(\textstyle{\frac 12}+\gamma)(\gamma)& \\
	 -(\textstyle{\frac 12}+\alpha+\beta)(\alpha+\beta)
	-(\textstyle{\frac 12}+\alpha+\gamma)(\alpha+\gamma)
	-(\textstyle{\frac 12}+\beta+\gamma)(\beta+\gamma)&=0.
	\end{align}
\end{proof}

\subsection{Special cases of the general formula}

In this section we use lemma~\ref{lem:generalformula} in order to derive the formulae for $c_0$, $c_1$, and $c_2$. Since all our expressions are homogeneous (the sum of the indices of the polynomials $Q$ is always equal to $g+1$), we can drop the factor $(-1)^m/2^m$ in the definition of $Q_m$, $m\geq 0$. 

We can substitute the values $Q_1(\frac 32) = 3$, $Q_1(1)=\frac 32$, $Q_1(\frac 12) = \frac 12$, $Q_1(0)=0$, $Q_2(\frac 52)=\frac{45}4$, $Q_2(2)=\frac{15}4$, $Q_2(\frac 32) = \frac 34$, $Q_3(\frac 52) = \frac{15}8$, $Q_3(2)=0$ in equation~\eqref{eq:generalformula}. This gives use the following coefficients of $Q_{g-n}(a_1)$, $Q_{g+1-n}(a_1)$, and $Q_{g+1-n}(a_1-\frac 12)$:
\begin{align} \label{eq:c0-terms}
\text{in }c_0:\qquad  &\left(\textstyle{\frac{3g}{2} -\frac 94 +\frac {3n}2}\right) Q_{g-n}(a_1) 
 + \left(\textstyle{6g +\frac 32 -3n}\right) Q_{g+1-n}(a_1)
\\ \notag
& + \left(\textstyle{-6g +0 +3n}\right) Q_{g+1-1}(a_n-\textstyle{\frac 12})
\\ \notag
\text{in }c_1:\qquad  &\left(\textstyle{\frac{3g}{2} -\frac {15}4 +\frac {3n}2}\right) Q_{g-n}(a_1) 
+ \left(\textstyle{6g +\frac 12 -3n}\right) Q_{g+1-n}(a_1)
\\ \notag
& + \left(\textstyle{-6g +1 +3n}\right) Q_{g+1-n}(a_1-\textstyle{\frac 12})
\\ \notag
\text{in }c_2:\qquad  &\left(\textstyle{\frac{3g}{2} -\frac {21}4 +\frac {3n}2}\right) Q_{g-n}(a_1) 
+ \left(\textstyle{6g -\frac 12 -3n}\right) Q_{g+1-n}(a_1)
\\ \notag
& + \left(\textstyle{-6g +2 +3n}\right) Q_{g+1-n}(a_1-\textstyle{\frac 12})
\end{align}
Note that the primary field $a_1$ has a different value in these three cases.

Furthermore, we are going to use that 
\begin{align}\label{eq:generalg1n-1}
& Q_{g+2-n}(a_1)-Q_{g+2-n}(a_1-\textstyle{\frac 12}) = \frac{(a_1)(a_1-\frac 12)\cdots (a_1-g-1+n)}{(g+1-n)!} \\
\label{eq:generalg1n-2}
& Q_{g+2-n}(a_1+1)-Q_{g+2-n}(a_1+\textstyle{\frac 32}) = \frac{-(a_1+\frac 32)(a_1+1)\cdots (a_1-g+\frac 12+n)}{(g+1-n)!} 
\end{align}

Let us combine these terms with the terms with $Q_{g+1-n}$ computed above. In the case of $c_0$ the primary field $a_1$ is equal to $2g-1 -\frac n2$. Then the sum of~\eqref{eq:c0-terms}, \eqref{eq:generalg1n-1}, and~\eqref{eq:generalg1n-2} is equal to the following expression:
\begin{align}
& \textstyle{\frac{(2g-1-\frac n2)\cdots (g-2+\frac n2)}{(g+1-n)!} }- (\textstyle{2g-1-\frac n2}) Q_{g+1-n} (\textstyle{2g-\frac 32 -\frac n2}) \\
& -(\textstyle{4g+1-\frac {5n}2}) Q_{g+1-n} (\textstyle{2g-\frac 32 -\frac n2}) +(\textstyle{4g+1-\frac {5n}2}) Q_{g+1-n} (\textstyle{2g-1 -\frac n2}) \\
& +(\textstyle{2g+\frac 12-\frac {n}2}) Q_{g+1-n} (\textstyle{2g-1 -\frac n2})-\frac{(2g+\frac 12 - \frac n2)\cdots (g-\frac 12+\frac n2)}{(g+1-n)!} \\
& =-(\textstyle{2g-1-\frac n2}) \frac{(2g-\frac 32 - \frac n2)\cdots (g-\frac 32+\frac n2)}{(g-n)!}
+(\textstyle{4g+1-\frac {5n}2}) \frac{(2g-1 - \frac n2)\cdots (g-1+\frac n2)}{(g-n)!} \\
& -(\textstyle{2g+\frac 12-\frac n2}) \frac{(2g-\frac 12 - \frac n2)\cdots (g-\frac 12+\frac n2)}{(g-n)!} \\
& =(\textstyle{3g+\frac 52-3n}) \frac{(2g-1 - \frac n2)\cdots (g-1+\frac n2)}{(g-n)!}
-(\textstyle{2g+\frac 12-\frac n2}) \frac{(2g-\frac 12 - \frac n2)\cdots (g-\frac 12+\frac n2)}{(g-n)!}
\end{align}
We can perform the same computation also for $c_1$ and $c_2$. Recall also in all three cases the term with $Q_{g-n}$ and the overall coefficients $\prod_{i=1}^{n-1} Q_1(a_i)$ in equation~\eqref{eq:generalformula}. We obtain the following expressions:

\begin{corollary} \label{cor:c-coeff} We have:
	\begin{align}
	c_0 =\ & Q_1(\textstyle{\frac 12})^{n-1} \left[ (\textstyle{\frac{3g}{2} -\frac 94 +\frac {3n}2}) \textstyle{\frac{(2g-\frac 12-\frac n2)\cdots (g-0+\frac n2)}{(g-n)!} } \right. \\ & \left.
	-(\textstyle{2g+\frac 12-\frac n2}) \frac{(2g-\frac 12 - \frac n2)\cdots (g-\frac 12+\frac n2)}{(g-n)!} + (\textstyle{3g+\frac 52-3n}) \frac{(2g-1 - \frac n2)\cdots (g-1+\frac n2)}{(g-n)!} \right] \\
	c_1 =\ & Q_1(\textstyle{\frac 12})^{n-2} Q_1(1) \left[ (\textstyle{\frac{3g}{2} -\frac {15}4 +\frac {3n}2}) \textstyle{\frac{(2g-1-\frac n2)\cdots (g-\frac 12+\frac n2)}{(g-n)!} } \right. \\ & \left.
	-(\textstyle{2g+0-\frac n2}) \frac{(2g-1 - \frac n2)\cdots (g-1+\frac n2)}{(g-n)!} + (\textstyle{3g+\frac 52-3n}) \frac{(2g-\frac 32 - \frac n2)\cdots (g-\frac 32+\frac n2)}{(g-n)!} \right] \\
		c_2 =\ & Q_1(\textstyle{\frac 12})^{n-3} Q_1(1)^2\left[ (\textstyle{\frac{3g}{2} -\frac {21}4 +\frac {3n}2}) \textstyle{\frac{(2g-\frac 32-\frac n2)\cdots (g-1+\frac n2)}{(g-n)!} } \right. \\ & \left.
		-(\textstyle{2g-\frac 12-\frac n2}) \frac{(2g-\frac 32 - \frac n2)\cdots (g-\frac 32+\frac n2)}{(g-n)!} + (\textstyle{3g+\frac 52-3n}) \frac{(2g-2 - \frac n2)\cdots (g-2+\frac n2)}{(g-n)!} \right] 
\end{align}
\end{corollary}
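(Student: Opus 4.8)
The plan is to derive each of the three coefficients by specializing the general formula of Lemma~\ref{lem:generalformula} and then simplifying via a telescoping property of the polynomials $Q_m$. First I would invoke equation~\eqref{eq:generalformula}, which already gives the sum of the coefficients of $\psi_1^{g-n}\prod_{i=2}^n\psi_i$ and $\psi_1^{g-n}\prod_{i=2}^n\psi_i^{1-\delta_{i\ell}}\kappa_1$ for \emph{any} choice $a_2,\dots,a_n\in\{\frac12,1\}$, with $a_1$ fixed by the constraint $a_1=2g-\frac32-\sum_{i=2}^n a_i$. Taking all $a_i=\frac12$ produces $c_0$ (so $a_1=2g-1-\frac n2$), taking exactly one $a_i=1$ produces $c_1$ (so $a_1=2g-\frac32-\frac n2$), and taking two $a_i=1$ produces $c_2$ (so $a_1=2g-2-\frac n2$). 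Thus no new geometric input is needed beyond the lemma; the remaining task is purely to evaluate and collect the terms of~\eqref{eq:generalformula} in each case.

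Second, I would substitute the explicit values $Q_1(\frac32)=3$, $Q_1(1)=\frac32$, $Q_1(\frac12)=\frac12$, $Q_1(0)=0$, $Q_2(\frac52)=\frac{45}4$, $Q_2(2)=\frac{15}4$, $Q_2(\frac32)=\frac34$, $Q_3(\frac52)=\frac{15}8$, $Q_3(2)=0$, all of which follow at once from the product defining $Q_m$ in the homogeneous normalization (dropping $(-1)^m/2^m$). Because each $a_\ell$ is constant over the relevant range and the prefactor $\prod_{i=2}^n Q_1(a_i)$ cancels the denominators $Q_1(a_\ell)$ occurring in the last three lines of~\eqref{eq:generalformula}, the $\ell$-sums collapse to counting, and the only surviving $a_1$-dependence runs through $Q_{g-n}(a_1)$, $Q_{g+1-n}(a_1)$, $Q_{g+1-n}(a_1-\frac12)$ and the two $Q_{g+2-n}$-differences. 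Collecting the first three of these reproduces~\eqref{eq:c0-terms} and its $c_1,c_2$ analogues.

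Third — the step carrying the real content — I would rewrite the two $Q_{g+2-n}$-differences as the single products in~\eqref{eq:generalg1n-1} and~\eqref{eq:generalg1n-2}. The mechanism is that $Q_{g+2-n}$ is, up to its normalization constant, $\prod_{k=1}^{2(g+2-n)}(a+1-\frac k2)$, whose zeros are $-\frac12,0,\frac12,\dots,g+1-n$; shifting the argument by $\frac12$ leaves all but one of these factors in place, so the difference $Q_{g+2-n}(a_1)-Q_{g+2-n}(a_1-\frac12)$ telescopes to $(g+2-n)$ times the common product over $(g+2-n)!$, i.e.\ to the stated ratio with $(g+1-n)!$ in the denominator. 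Applying this with argument $a_1$ yields~\eqref{eq:generalg1n-1}, and with argument $a_1+\frac32$ (using $a_1+\frac32-\frac12=a_1+1$) yields~\eqref{eq:generalg1n-2}. Substituting the appropriate value of $a_1$ and combining these products with the $Q_{g+1-n}(a_1)$ and $Q_{g+1-n}(a_1-\frac12)$ terms from~\eqref{eq:c0-terms} — which are themselves such products, so that adjacent factors match and a common factorial can be extracted — collapses the whole bracket into the displayed three-term expression for $c_0$, and the verbatim repetition gives $c_1$ and $c_2$.

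Finally, I expect the only genuine obstacle to be bookkeeping: one must track precisely which half-integer-shifted linear factors cancel between $Q_{g+2-n}$, $Q_{g+1-n}$ and their $\frac12$-shifts so that every contribution is brought to a common product with denominator $(g-n)!$, after which the coefficients $(3g+\frac52-3n)$ and the two linear-in-$g,n$ prefactors in Corollary~\ref{cor:c-coeff} emerge by matching leading factors. I would organize this uniformly in $g$ and $n$ via the telescoping identity above rather than case by case, so that the three formulas differ only through the substituted value of $a_1$ and the power of $Q_1(\frac12)$ versus $Q_1(1)$ in the common prefactor.
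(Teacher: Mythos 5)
Your proposal is correct and follows essentially the same route as the paper: the paper likewise obtains $c_0,c_1,c_2$ by specializing lemma~\ref{lem:generalformula} to the three vectors of primary fields (all $a_i=\frac12$; one, respectively two, equal to $1$, with $a_1$ fixed by the constraint), substituting the same list of values $Q_1(\tfrac32)=3,\dotsc,Q_3(2)=0$ to reach~\eqref{eq:c0-terms}, and invoking exactly your telescoping identities in the form of~\eqref{eq:generalg1n-1} and~\eqref{eq:generalg1n-2} before collecting all products over the common denominator $(g-n)!$. The only difference is presentational: the paper carries out the factor-matching explicitly for $c_0$ and remarks that the same computation yields $c_1$ and $c_2$, which is precisely the bookkeeping you defer to your final step.
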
 

\subsection{Proof of non-degeneracy}\label{subsec:proofnondegeneracy} In this subsection we prove proposition~\ref{prop:nondegeneracy}. First, observe that 
$Q_{g+1-n}(a_1)-Q_{g+1-n}(\textstyle{a_1-\frac12})$ is equal to $\textstyle{\frac{(a_1)(a_1-\frac 12)\cdots (a_1-g+n)}{(g-n)!}}$. We substitute $a_1=2g-1+\frac n2$ for $c_0$ (respectively, $2g-\frac 32+\frac n2$ for $c_1$ and $2g-2+\frac n2$ for $c_2$) and combine the result of corollary~\ref{cor:c-coeff} and equation~\ref{eq:chatdefinition} in order to obtain the following formulae:
\begin{align}
	\textstyle\frac 34 \hat c_0 =\ &  (\textstyle{\frac{3g}{2} -\frac 94 +\frac {3n}2}) \textstyle{\frac{(2g-\frac 12-\frac n2)}{(g-\frac 12+\frac n2)(g-1+\frac n2)}}
	-(\textstyle{2g+\frac 12-\frac n2}) \frac{(2g-\frac 12 - \frac n2)}{(g-1+\frac n2)} + (\textstyle{3g+\frac 52-3n}) \\
	\textstyle\frac 34  \hat c_1 =\ & (\textstyle{\frac{3g}{2} -\frac {15}4 +\frac {3n}2}) \textstyle{\frac{(2g-1-\frac n2)}{(g-1+\frac n2)(g-\frac 32+\frac n2)} } 
	-(\textstyle{2g+0-\frac n2}) \frac{(2g-1 - \frac n2)}{(g-\frac 32+\frac n2)} + (\textstyle{3g+\frac 52-3n}) \\
	\textstyle\frac 34 \hat c_2 =\ &  (\textstyle{\frac{3g}{2} -\frac {21}4 +\frac {3n}2}) \textstyle{\frac{(2g-\frac 32-\frac n2)}{(g-\frac 32+\frac n2)(g-2+\frac n2)} } 
	-(\textstyle{2g-\frac 12-\frac n2}) \frac{(2g-\frac 32 - \frac n2)}{(g-2+\frac n2)} + (\textstyle{3g+\frac 52-3n}) 
\end{align}
By an explicit computation, we obtain that 
\[
\textstyle\frac 34 (\hat c_0 -2\hat c_1 + \hat c_2) = \frac{S(g,n)}{(g-\frac 12+\frac n2)(g-1+\frac n2)(g-\frac 32+\frac n2)(g-2+\frac n2)},
\]
where 
\[
S(g,n)=-g+\frac{11}8 n-\frac 94 g^2+\frac 98 gn -\frac 12 g^3+\frac 34 g^2n-\frac 14 n^3
\]
We want to prove that this polynomial is never equal to zero in the integer points $(g,n)$ satisfying $3\leq n\leq g-1$. We can make a change of variable $n=b+3$, $g=a+b+4$, then we want to prove that $S(a+b+4, b+3)$ never vanishes for any integer $a,b\geq 0$. 
This is indeed the case since all non-zero coefficients of the polynomial
\[
S(a+b+4, b+3) =-\frac{201}8-\frac{173}8 a -\frac{21}2b -6a^2-\frac{39}8ab
-\frac 98b^2-\frac 12 a^3 - \frac 34 a^2b
\]
are negative including the constant term. This completes the proof of proposition~\ref{prop:nondegeneracy}.

\section{Vanishing of \texorpdfstring{\( R^{\geq g}(\mc{M}_{g,n})\)}{high tautological ring}}\label{sec:dimRg}

In this section we will give a new proof of the following theorem.
\begin{theorem}[\cite{Looijenga1995,Ionel2002}]\label{thm:highertautringzero}
The tautological ring of \( \mc{M}_{g,n}\) vanishes in degrees \( g\) and higher, that is \( R^{\geq g} (\mc{M}_{g,n}) = 0\).
\end{theorem}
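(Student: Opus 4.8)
The plan is to extend the two-step scheme of section~\ref{sec:dimRg-1} from degree $g-1$ to every degree $D\geq g$. The decisive new feature is that for $D\geq g$ the simplified relations of subsection~\ref{subsec:simprelI} with $m=0$ are available directly on $\MMM_{g,n}$ itself: for every tuple of non-negative integers $(a_1,\dots,a_n)$ with $\sum_i a_i=g-1+D$, the restriction of $\Omega^{D}_{g,n}(a_1,\dots,a_n)$ to $\MMM_{g,n}$ is the pure $\psi$-relation coming from the open stratum,
\begin{equation}
\sum_{\substack{d_1+\dots+d_n=D\\ 0\leq d_i\leq a_i}}\ \prod_{i=1}^n Q_{d_i}(a_i)\ \prod_{i=1}^n\psi_i^{d_i}=0.
\end{equation}
In degree $g-1$ no such relation exists, since negativity of the exponent of $\phi$ forces $D\geq g$ (point (3) of subsection~\ref{subsection:analysisrelations} with $x=0$); this is exactly why the ring survives in degree $g-1$ but must die in degree $g$.

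\textbf{Vanishing of the pure $\psi$-monomials.} First I would show that every monomial $\psi_1^{d_1}\cdots\psi_n^{d_n}$ of degree $D$ vanishes. For a fixed exponent vector I would choose the primary fields $a_i=d_i$ for $i=2,\dots,n$ and $a_1=d_1+g-1$; these are non-negative integers summing to $g-1+D$. Since the integer roots of $Q_m$ are exactly $0,1,\dots,m-1$ (subsection~\ref{subsec:simprelI}), the factor $Q_{e_i}(d_i)$ forces $e_i\leq d_i$ for $i\geq 2$, whence $e_1\geq d_1$, and the single diagonal term $e_1=d_1$ has coefficient $Q_{d_1}(d_1+g-1)\prod_{i\geq 2}Q_{d_i}(d_i)\neq 0$, because $d_1+g-1\geq d_1$ for $g\geq 1$. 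Thus the relation expresses $\psi_1^{d_1}\cdots\psi_n^{d_n}$ through monomials of strictly larger $\psi_1$-degree. The maximal monomial $\psi_1^{D}$ is killed outright by the fields $(D+g-1,0,\dots,0)$, whose relation collapses to $Q_D(D+g-1)\,\psi_1^{D}=0$ with $Q_D(D+g-1)\neq 0$. A downward induction on the exponent of $\psi_1$ then annihilates every pure $\psi$-monomial of degree $D$.

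\textbf{Removal of the $\kappa$-classes.} Next I would repeat the argument of proposition~\ref{prop:psispanRg-1} verbatim, but in degree $D$ instead of $g-1$. To treat a monomial $\psi_1^{d_1}\cdots\psi_n^{d_n}\kappa_{e_1,\dots,e_m}$ I would push forward $\Omega^{D+m}_{g,n+m}(a_1,\dots,a_n,b_1,\dots,b_m)$ along $\pi^{(m)}$, taking $a_i=d_i$ for $i\geq 2$, $b_j=e_j+1$, and $a_1=d_1+g-1$ (these sum to $g-1+(D+m)$, as required, and $D+m\geq g$). Exactly as in the derivation of~\eqref{eq:firstgraph2}, the push-forward expresses this monomial as a combination of pure $\psi$-monomials of degree $D$, of $\kappa$-monomials with fewer than $m$ indices, and of $\kappa$-monomials with $m$ indices but strictly larger power of $\psi_1$. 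An outer induction on the number $m$ of $\kappa$-indices, together with an inner downward induction on the exponent of $\psi_1$, then writes every generator of $R^{D}(\MMM_{g,n})$ in terms of pure $\psi$-monomials, which vanish by the previous step. Hence $R^{D}(\MMM_{g,n})=0$ for all $D\geq g$.

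\textbf{Main obstacle.} The stratum-by-stratum push-forward bookkeeping in the $\kappa$-reduction is routine, being identical to that of section~\ref{sec:dimRg-1}. The one point on which both inductions genuinely rest is the non-vanishing of the diagonal coefficients $Q_{d_1}(d_1+g-1)$ and $Q_{D}(D+g-1)$: this is precisely where the hypothesis $D\geq g$ enters, through the explicit integer roots $0,1,\dots,m-1$ of $Q_m$, and it is the single ingredient that separates the vanishing in degrees $\geq g$ from the nonzero ring of dimension $n$ in degree $g-1$.
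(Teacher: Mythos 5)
Your proof is correct, but it takes a partly different route from the paper's. Your first step is essentially identical to the paper's lemma~\ref{lem:nopsiing}: the same restriction of $\Omega^{D}_{g,n}$ to the open part and the same downward induction on the exponent of $\psi_1$ with fields $a_i=d_i$ ($i\geq 2$) and $a_1$ fixed by the sum condition (the paper only runs this for $D=g$ and gets $D>g$ by multiplying by $\psi$-classes, which is immaterial). The divergence is in the treatment of $\kappa$-classes. The paper proves a separate lemma (lemma~\ref{lem:nokappainhighd}) that pure multi-index $\kappa$-classes of degree $\geq g$ vanish, by multiplying $\Omega^{g}_{g,n+d}$ by $\sigma=\prod_j\psi_{n+j}^{f_j+1}$ so that all boundary strata die for degree reasons; it then disposes of mixed monomials by induction on the total degree using the ring structure, $\mu=\nu\cdot\psi_i$, where $\nu$ either vanishes by the degree induction or, in the base case $\deg\mu=g$, is a pure $\psi$-polynomial by proposition~\ref{prop:psispanRg-1} and so $\mu$ vanishes by lemma~\ref{lem:nopsiing}. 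You instead bypass both of these and rerun the reduction scheme of proposition~\ref{prop:psispanRg-1} in every degree $D\geq g$, pushing forward $\Omega^{D+m}_{g,n+m}$ with $a_i=d_i$, $b_j=e_j+1$, $a_1=d_1+g-1$. This is sound: the fields are non-negative integers summing to $g-1+(D+m)$, the relation exists because $D+m\geq g$, the derivation of the degree-$D$ analogue of~\eqref{eq:firstgraph2} is degree-independent (terms with exactly $m$ $\kappa$-indices arise only from the open stratum, while boundary strata contribute at most $m-1$ indices), and your diagonal coefficients are non-zero because $d_1+g-1$, $d_i$ and $e_j+1$ all exceed the largest roots of the relevant $Q$-polynomials. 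Comparing the two: your argument is uniform and self-contained, avoids invoking multiplicativity of the tautological ring, and isolates exactly where the hypothesis $D\geq g$ enters; the paper's is shorter once proposition~\ref{prop:psispanRg-1} is in place, and its $\sigma$-multiplication device is the one reused in section~\ref{sec:bounddimRlow} --- your second step is in fact close to the argument the paper gives there for the bound on $R^{\leq g-2}(\MMM_{g,n})$. One pedantic point, shared with the paper's statement: for $g=0$ the claim requires the $\max(g,1)$ convention of lemma~\ref{lem:nopsiing}, since $R^0$ never vanishes.
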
 
This theorem and theorem~\ref{thm:BSZ16} together consistute the generalized socle conjecture, as the bound \( \dim R^{g-1} (\MMM_{g,n} ) \geq n\) can be proved relatively simply, see e.g. \cite{BuryakShadrinZvonkine2016}. This conjecture is a generalization of one of Faber's three conjectures on the tautological ring of \( \MMM_g \), see \cite{Faber1999} for the original conjectures and \cite{BuryakShadrinZvonkine2016} for the generalization.\par
The proof consists of three steps: in steps one and two, we show that the pure \( \psi \)- and \( \kappa \)-classes vanish, respectively, and in step three we reduce the mixed monomials to the pure cases. The first two steps will be proved in separate lemmata.


\begin{lemma}\label{lem:nopsiing}
Let \( g \geq 0\) and \( n \geq 1\). Any monomial in \( \psi \)-classes of degree at least \( \max (g, 1) \) vanishes on \( \mc{M}_{g,n} \).
\end{lemma}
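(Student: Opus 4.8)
The plan is to reduce everything to a single degree and then read off the relation $\Omega_{g,n}^{g}$ (with $m=0$) restricted directly to $\mathcal{M}_{g,n}$, where only the open stratum survives. First I would observe that it suffices to treat monomials of degree exactly $\max(g,1)$: if $\prod_{i=1}^n\psi_i^{d_i}$ has degree $D\geq\max(g,1)$, I can pick exponents $e_i\leq d_i$ with $\sum_i e_i=\max(g,1)$, so that $\prod_i\psi_i^{e_i}$ divides $\prod_i\psi_i^{d_i}$; since $R^*(\mathcal{M}_{g,n})$ is a ring, vanishing of the degree-$\max(g,1)$ factor forces vanishing of the whole monomial. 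The genus-$0$ case is then immediate, since on $\overline{\mathcal{M}}_{0,n}$ each $\psi_i$ is a sum of boundary divisors and hence restricts to zero on $\mathcal{M}_{0,n}$. So from now on I assume $g\geq 1$ and prove that every degree-$g$ monomial $\prod_{i=1}^n\psi_i^{d_i}$ vanishes.

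For the main step I would use the relation $\Omega_{g,n}^{g}(a_1,\dots,a_n)$ with $\sum_{i=1}^n a_i=2g-1$. Restricting it to $\mathcal{M}_{g,n}$ kills all boundary strata, and since $r=\tfrac12$ kills the dilaton leaves no $\kappa$-classes appear, so only the single-vertex graph contributes, giving
\[
0=\sum_{\substack{d_1'+\dots+d_n'=g\\ 0\leq d_i'\leq a_i}}\prod_{i=1}^n Q_{d_i'}(a_i)\,\prod_{i=1}^n\psi_i^{d_i'}
\]
in $R^g(\mathcal{M}_{g,n})$, exactly as in the open-stratum computation of proposition~\ref{prop:psispanRg-1}. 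Following that proposition, the idea is to choose the primary fields so that a prescribed monomial becomes the leading term. Given $\prod_i\psi_i^{d_i}$ of degree $g$, I take $a_i=d_i$ for $i=2,\dots,n$ and $a_1=d_1+g-1$, which satisfies $\sum a_i=2g-1$. Then every term of the relation has $d_i'\leq d_i$ for $i\geq 2$, whence $d_1'=g-\sum_{i\geq2}d_i'\geq d_1$, with equality precisely for the chosen monomial. Its coefficient is $Q_{d_1}(d_1+g-1)\prod_{i\geq2}Q_{d_i}(d_i)$, which is nonzero because the roots of $Q_m$ all lie in $\{-\tfrac12,0,\tfrac12,\dots,m-1\}$, so that $Q_{d_i}(d_i)\neq 0$ and $Q_{d_1}(d_1+g-1)\neq 0$ as $d_1+g-1\geq d_1>d_1-1$.

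Thus each relation expresses its chosen monomial as a linear combination of degree-$g$ monomials with strictly larger exponent of $\psi_1$, and I would finish by downward induction on this exponent. The base case is $\psi_1^g$ itself: taking $a_1=2g-1$ and $a_i=0$ for $i\geq 2$ forces $d_i'=0$ for all $i\geq 2$, so the relation reads $0=Q_g(2g-1)\psi_1^g$ with $Q_g(2g-1)\neq 0$, giving $\psi_1^g=0$. For the inductive step, if every degree-$g$ monomial with $\psi_1$-exponent exceeding $d_1$ already vanishes, then the relation attached to a monomial of $\psi_1$-exponent $d_1$ expresses it through such higher monomials, so it vanishes too. This proves the claim in degree $g$, and the reduction above then yields all degrees at least $g$.

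I expect no serious obstacle here: the argument is a direct adaptation of the triangular scheme of proposition~\ref{prop:psispanRg-1}, now with no pushforward needed since $\Omega_{g,n}^{g}$ is restricted to $\mathcal{M}_{g,n}$ straight away. The only points requiring care are bookkeeping ones, namely confirming that on the open moduli space exactly the single-vertex term survives and that the relevant values of $Q_m$ are nonzero through their explicit root set, together with the separate and elementary treatment of $g=0$.
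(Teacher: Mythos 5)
Your proposal is correct and follows essentially the same route as the paper's proof: restrict $\Omega_{g,n}^{g}$ (with no forgotten points) to $\mathcal{M}_{g,n}$ so that only the single-vertex term survives, match the primary fields to the target monomial via $a_i=d_i$ ($i\geq 2$), $a_1=d_1+g-1$, and run a downward induction on the exponent of $\psi_1$ using the triangularity forced by the vanishing of $Q_m(a)$ for $m>a$. The only differences are cosmetic — you verify the nonvanishing of the leading coefficient and the base case $\psi_1^g=0$ explicitly, and spell out the reduction from higher degrees, where the paper starts the induction trivially at exponent $g+1$ and cites the $g=0$ case.
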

\begin{remark}
This lemma was originally conjectured by Getzler in \cite{Getz98}.
\end{remark}
\begin{proof} 
For \( g= 0\), this is well-known, see e.g. \cite[proposition 2.13]{Zvonkine2012}. So let us assume \( g \geq 1\).\par
We will prove that any monomial in \( \psi \)-classes of degree \( g\) vanishes. This clearly implies that any monomial of higher degree vanishes as well.\par
For this, look again at \( \Omega^g \), but now on \( \overline{\mc{M}}_{g,n} \). When restricted to the open part \( \mc{M}_{g,n} \), the only contributing graph is the one with one vertex of genus \( g\), as the other graphs correspond to boundary divisors by definition. Hence, the equation for the CohFT reduces to
\begin{equation}
\Omega_{g,n}^{g}(a_1, \dotsc, a_n) \bigg|_{\mc{M}_{g,n}}= \begin{cases} -\frac{1}{2}  \prod_{i=1}^n \Big( \sum_{m_i \geq 0} Q_{m_i}(a_i) \psi_i^{m_i} \Big) &\text{if } \sum_{i=1}^n a_i = 2g-1 \\ 0 & \text{else.} \end{cases}
\end{equation}
We will prove vanishing of all monomials using downward induction on the exponent \( d_1 \) of \( \psi_1 \), starting with the case of \( d_1 = g+1\). This case trivially gives a zero, as this power cannot occur in a monomial of total degree \( g\).\par
Now, assuming all monomials with exponent of \( \psi_1 \) larger than \( d_1 \) vanish, consider the monomial \( \psi_1^{d_1} \dotsb \psi_n^{d_n} \) for any \( d_i \) summing up to \( g\). For the relation, choose \( a_i = d_i\) for all \( i \neq 1\), and \( a_1 = 2g-1 - \sum_{i=2}^n a_i \). This means \( Q_{m_i} (a_i) = 0\) unless \( m_i \leq d_i\) or \( i = 1\), so the only monomials with non-zero coefficients have exponent of \( \psi_i \) at most \( d_i \) for \( i \neq 1\). Because the total degree is fixed, the only surviving monomial with exponent of \( \psi_1 \) equal to \( d_1 \) is the one we started with, and this relation expresses it in monomials with strictly larger exponent of \( \psi_1 \). By the induction hypothesis, this monomial must be zero.
\end{proof}

\begin{remark}
Note that this argument breaks down for degrees lower than \( g\), as the class does not vanish there. Therefore, to get relations in those degrees, one must push forward relations in higher degrees along forgetful maps \emph{on the compactified moduli space}, which contain non-trivial contributions from boundary strata.
\end{remark}


\begin{lemma}\label{lem:nokappainhighd}
Any multi-index \( \kappa \)-class of degree at least \( g\) vanishes on \( \mc{M}_{g,n} \).
\end{lemma}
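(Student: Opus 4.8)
The plan is to mirror the proof of proposition~\ref{prop:psispanRg-1}, but with target degree \( D \geq g \) in place of \( g-1 \); the essential new ingredient is that in degree \( D \geq g \) the pure \( \psi \)-monomials already vanish by lemma~\ref{lem:nopsiing}, which upgrades the ``expressible in \( \psi \)-classes'' conclusion of that proposition into genuine vanishing. It suffices to treat one degree \( D \geq g \) at a time, as the argument is uniform in \( D \). I would argue by induction on the length \( m \) of the multi-index of \( \kappa_{e_1,\dots,e_m} \), where \( D = \sum_{j=1}^m e_j \), the inductive hypothesis being that every multi-index \( \kappa \)-class of degree \( \geq g \) and length \( < m \) vanishes on \( \MMM_{g,n} \). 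The relations I would feed into this are \( \Omega^{D+m}_{g,n+m} \), pushed forward along the map forgetting the last \( m \) points from \( \MMM^{\text{rt}[n]}_{g,n+m} \) to \( \MMM_{g,n} \); this lands in degree \( D \), and the \( \psi \)-powers at the forgotten points are exactly what produce the \( \kappa \)-classes.

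The key step is to isolate the length-\( m \) part of this pushforward. Exactly as in the derivation of~\eqref{eq:firstgraph2}, the terms carrying a length-\( m \) multi-index \( \kappa \)-class assemble into
\[
\sum_{\substack{0\leq d_i'\leq a_i,\ 1\leq e_j'\leq b_j-1\\ \sum_i d_i'+\sum_j e_j'=D}} \Big(\prod_{i=1}^n Q_{d_i'}(a_i)\,\psi_i^{d_i'}\Big)\Big(\prod_{j=1}^m Q_{e_j'+1}(b_j)\Big)\,\kappa_{e_1',\dots,e_m'},
\]
where \( a_1,\dots,a_n \) are the primary fields of the retained points and \( b_1,\dots,b_m \) those of the forgotten points. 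Choosing \( a_i = d_i \) for \( 2\leq i\leq n \), \( b_j = e_j+1 \), and \( a_1 = d_1+g-1 \) (so that \( \sum a_i+\sum b_j = g-1+D+m \), as the relation requires), the root structure of the \( Q \)-polynomials forces \( d_i'\leq d_i \) and \( e_j'\leq e_j \), hence \( d_1'\geq d_1 \) with equality only for the target monomial \( \psi_1^{d_1}\prod_{i\geq 2}\psi_i^{d_i}\kappa_{e_1,\dots,e_m} \). Thus the relation expresses the target in terms of length-\( m \) monomials of strictly larger \( \psi_1 \)-exponent, modulo pure \( \psi \)-monomials and \( \kappa \)-classes of length \( <m \).

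I would then conclude by a downward induction on the exponent of \( \psi_1 \). The largest \( \psi_1 \)-exponent occurring in a degree-\( D \) length-\( m \) monomial is \( D-m \), so for the top monomial the strictly-larger-exponent terms are absent, and the right-hand side consists only of pure \( \psi \)-monomials (which vanish in degree \( D\geq g \) by lemma~\ref{lem:nopsiing}) and length-\( <m \) classes (which vanish by the outer induction). Descending in the \( \psi_1 \)-exponent kills every length-\( m \) monomial, and specializing to all \( d_i=0 \) (so that \( a_1=g-1 \) and \( a_i=0 \) for \( i\geq 2 \)) yields the pure multi-index class \( \kappa_{e_1,\dots,e_m} \). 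The main obstacle is the pushforward bookkeeping underlying the displayed formula: one must identify precisely which rational-tails strata of \( \MMM^{\text{rt}[n]}_{g,n+m} \) contribute and compute their \( Q \)-polynomial coefficients, in the spirit of lemma~\ref{lem:tautooffboundary}, while controlling the combinatorics of forgetting several marked points at once — where products of \( \kappa \)-classes must be reorganized into shorter multi-index \( \kappa \)-classes — so that, after the dust settles, only the triangular length-\( m \) part together with strictly shorter and pure-\( \psi \) terms remains.
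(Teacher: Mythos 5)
Your proposal is correct in substance, but it takes a genuinely different route from the paper's own proof. You generalize the scheme of proposition~\ref{prop:psispanRg-1} to degree \( D \geq g \): push forward \( \Omega^{D+m}_{g,n+m} \) with no auxiliary class, check that boundary strata and pushforward corrections only produce \( \kappa \)-classes of length \( <m \) or pure \( \psi \)-monomials, and run a double induction (outer on the \( \kappa \)-length \( m \), with lemma~\ref{lem:nopsiing} as the base case \( m=0 \); inner downward on the exponent of \( \psi_1 \)) using the triangularity coming from the roots of the \( Q \)-polynomials. The paper instead does something more economical and self-contained: it pads the \( \kappa \)-class to exactly \( d \) indices, takes the minimal-degree relation \( \Omega^g_{g,n+d} \) with primary fields \( 0 \) at all retained points (so no \( \psi \)-classes appear there), and multiplies by \( \sigma = \prod_{j=1}^d \psi_{n+j}^{f_j+1} \) of degree \( 2d-g \geq d \) --- possible precisely because \( d \geq g \) --- before pushing forward. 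Since every forgotten point then carries at least one \( \psi \)-class, any rational tail would carry degree exceeding its dimension, so \emph{all} boundary contributions vanish identically, and the pushforward is, by the very definition of multi-index \( \kappa \)-classes, a relation purely among pure \( \kappa \)-classes of length \( d \), triangular in the first index; a single downward induction finishes. Thus the paper's proof needs no boundary bookkeeping, no lemma~\ref{lem:nopsiing}, and no handling of mixed monomials; yours needs all three, but in exchange it proves the stronger statement (vanishing of all mixed \( \psi \)-\( \kappa \) monomials of degree \( \geq g \), i.e.\ essentially theorem~\ref{thm:highertautringzero} itself) in one pass. One point to tighten: your outer induction hypothesis must be stated for monomials in \( \psi \)-classes times a \( \kappa \)-class of length \( <m \), not just for pure \( \kappa \)-classes of length \( <m \), since the mixed terms are exactly what your relation produces; as your argument in fact establishes that stronger statement at every stage, this is only a matter of phrasing, not a gap.
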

\begin{proof}
Fix a degree \( d \geq g\), and consider the pure (multi-index) \( \kappa \)-classes in this degree. Without loss of generality, we can assume the amount of indices to be equal to \( d\): this is certainly an upper bound, and adding and extra zero index only multiplies the class by a non-zero factor, using the dilaton equation on the definition of multi-index \( \kappa \)-classes.\par
We will consider \( \Omega^g_{g,n+d} \). In order to get a relation in \( R^d (\mc{M}_{g,n}) \), we should multiply by a class \( \sigma \) of degree \( 2d-g\), push forward to \( \overline{\mc{M}}_{g,n} \), and then restrict to \( \mc{M}_{g,n} \). As we can now assume \( d \geq g\), we have \( 2d-g \geq d\), and we can therefore choose \( \sigma = \prod_{j=1}^d \psi_{n+j}^{f_j +1} \), with each \( f_j \geq 0\). By choosing such a \( \sigma \), we ensure that after pushforward and restriction to the open moduli space, none of the contributions from boundary divisors on \( \oM_{g,n+d} \) survive, and only the term with one vertex contributes.\par
We will use downward induction on the first index of the \( \kappa \)-class. The base case is a first index larger than \( d\), and hence another index being negative, giving a trivial zero.\par
Now, assume all \( \kappa \)-classes with first index larger than \( e_1 \) are zero. Fix a class \( \kappa_{e_1, \dotsc, e_d} \) of degree \(d = \sum_{j=1}^d e_j\), choose a set of non-negative integers \( \{ a_j, f_j \mid 2 \leq j \leq d \} \) such that \( a_j + f_j = e_j\), and set \( a_1 = 2g-1- \sum_{j=2}^d a_j\) and \( f_1 = 0\). We will consider
\begin{align}
&\pi_*^d \big( \sigma \cdot \Omega_{g,n+d}^g(0, \dotsc, 0, a_1, \dotsc, a_d) \big) \bigg|_{\mc{M}_{g,n}}\\
&\qquad=  \pi_*^d \Big( \prod_{j=1}^d \psi_{n+j}^{f_j+1} \cdot -\frac{1}{2} \prod_{j=1}^d \sum_{m_j\geq 0} Q_{m_j} (a_j) \psi_{n+j}^{m_j} \Big) \bigg|_{\mc{M}_{g,n}} \\
&\qquad= -\frac{1}{2} \sum_{\substack{m_j \geq 0\\1 \leq j \leq d}} \Big( \prod_{j=1}^d Q_{m_j}(a_j) \Big) \kappa_{f_1+m_1, \dotsc, f_d+m_d},
\end{align}
which vanishes. By our choice of \( a_j \), for the product of \( Q\)-polynomials to be non-zero, we need \( m_j \leq a_j \) for \( j \neq 1\). Furthermore, by our choice of \( f_j\), this shows that \( f_j + m_j \leq e_j \) for \( j \neq 1\). Because we look at a fixed degree \( d\), this means \( f_1 + m_1 \geq e_1 \), with equality only occuring for \( m_j = f_j \), \( j \neq 1\), and hence for the \( \kappa \)-class we started with. Hence this relation expresses our chosen class \( \kappa_{e_1, \dotsc, e_d} \) in terms of \( \kappa \)-classes with strictly higher first index, which we already know vanish.
\end{proof}
\begin{remark}
Note that we cannot use the vanishing of the \( \psi \)-monomials in higher degrees and push these relations forward, as the \( \kappa \)-classes are defined by pushing forward \( \psi \)-classes on the compactified moduli space and then restricting to the open part, and not the other way around.
\end{remark}

We are now ready to prove the theorem.

\begin{proof}[Proof of theorem~\ref{thm:highertautringzero}]
For general monomial \( \psi \)-\( \kappa \)-classes, i.e. classes of the form \( \mu = \psi_i^{d_1} \dotsb \psi_n^{d_n} \cdot \kappa_{e_1, \dotsc, e_k} \), we will use induction on the total degree \( d = \sum_{i=1}^n d_i + \sum_{j=1}^k e_j \). If all \( d_i \) are zero, we are in the case of lemma~\ref{lem:nokappainhighd}, so we can assume at least one of them is non-zero, i.e. \( \mu = \nu \cdot \psi_i \) for some \( i\).\par
In degree \( d= g\), we get that the degree of \( \nu \) is \( g-1\). By proposition~\ref{prop:psispanRg-1}, we know that \( \nu \) is a polynomial in \( \psi \)-classes. Therefore, so is \( \mu = \nu \cdot \psi_i \). By lemma~\ref{lem:nopsiing}, we know \( \mu \) vanishes.\par
For the induction step, we know by induction that \( \nu \) is zero, hence \( \mu \) is too. This finishes the proof of theorem~\ref{thm:highertautringzero}.
\end{proof}

Because the proof of this theorem only uses the case \( x=0\) from subsection~\ref{subsection:analysisrelations}, see also subsection~\ref{subsec:simprelI}, and only fixed non-negative integer primary fields, all the relations are actually explicit on all of \( \oM_{g,n} \). Hence, we get the following
\begin{proposition}\label{prop:classalgbound}
The Pandharipande-Pixton-Zvonkine relations for \( r = \frac12 \) give an algorithm for computing explicit tautological boundary formulae in the Chow ring for any tautological class on \( \oM_{g,n} \) of codimension at least \( g\). In particular, the intersection numbers of \( \psi \)-classes on \( \oM_{g,n} \) can be computed with these relations for any \( g \geq 0\) and \( n \geq 1\) such that \( 2g-2+n >0\).
\end{proposition}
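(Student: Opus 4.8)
The plan is to observe that the entire proof of theorem~\ref{thm:highertautringzero}, together with the auxiliary proposition~\ref{prop:psispanRg-1} and lemmata~\ref{lem:nopsiing} and~\ref{lem:nokappainhighd}, is constructive, and to record the boundary formulae it implicitly produces. The starting point is the remark preceding this proposition: the relations $\Omega^{g}_{g,n+k}$ used throughout the proof belong to the case $x=0$ of subsection~\ref{subsection:analysisrelations} (see also subsection~\ref{subsec:simprelI}) and are evaluated only at non-negative integer primary fields, so by the results of Janda they are genuine Pandharipande-Pixton-Zvonkine relations in the Chow ring of the full space $\oM_{g,n+k}$, not merely identities on the open part $\MMM_{g,n+k}$. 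Consequently each assertion of the form ``this monomial vanishes on $\MMM_{g,n}$'' should be read as ``this interior (single-vertex, edgeless) decorated monomial equals an explicit $\mathbb{Q}$-linear combination of decorated dual graphs carrying at least one edge,'' i.e.\ a tautological boundary formula in the Chow ring of $\oM_{g,n}$.

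First I would make the reductions explicit. In lemma~\ref{lem:nopsiing} the relation $\Omega^{g}_{g,n}(a_1,\dots,a_n)$, kept on all of $\oM_{g,n}$, expresses a degree-$g$ monomial in $\psi$-classes as a combination of $\psi$-monomials with strictly larger exponent of $\psi_1$ plus the boundary graphs that were dropped upon restriction; the downward induction on this exponent terminates and yields a closed boundary formula. Higher-degree pure $\psi$-monomials are handled by factoring off a degree-$g$ sub-monomial and multiplying the resulting boundary formula by the remaining $\psi$-classes, which keeps the support in the boundary. Lemma~\ref{lem:nokappainhighd} does the same for pure $\kappa$-classes of degree $\geq g$ via $\pi^{d}_*(\sigma\cdot\Omega^{g}_{g,n+d})$ and downward induction on the first $\kappa$-index; here one must separate, among the graphs appearing in the pushforward to $\oM_{g,n}$, the interior $\kappa$-terms (coming from extra points on contracted rational tails) from the genuine edged terms. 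Proposition~\ref{prop:psispanRg-1}, read on $\oM_{g,n}$, rewrites any degree $g-1$ monomial with $\kappa$-factors as pure $\psi$-classes modulo boundary, and the degree induction in the proof of theorem~\ref{thm:highertautringzero} then expresses an arbitrary mixed monomial $\mu=\nu\cdot\psi_i$ of codimension $\geq g$ as a boundary class. Assembling: given any tautological class of codimension $\geq g$, write it in the standard additive basis of decorated dual graphs, leave untouched every graph already possessing an edge, and apply the above reductions to every single-vertex graph; collecting terms produces the desired explicit boundary formula. Since every step is a finite $\mathbb{Q}$-linear manipulation of prescribed relations, this is an algorithm.

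For the statement about $\psi$-intersection numbers, note that the top monomial $\psi_1^{d_1}\cdots\psi_n^{d_n}$ with $\sum_i d_i=\dim\oM_{g,n}=3g-3+n$ has codimension $3g-3+n$, and the stability hypothesis $2g-2+n>0$ is equivalent, for integers, to $2g+n\geq 3$, hence to $3g-3+n\geq g$. Thus the boundary formula of the previous step applies to the top $\psi$-monomial. Capping with the fundamental class and using that the boundary gluing maps express the integral of each boundary term as a product of $\psi$-integrals (now including the $\psi'$ and $\psi''$ classes at the nodes) over the lower-dimensional factors $\oM_{g',n'}$ attached at each vertex, the computation of any $\psi$-intersection number reduces to $\psi$-intersection numbers on strictly simpler moduli spaces --- either of smaller genus, or of the same genus with fewer marked points after splitting off a genus-$0$ component. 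A downward induction in, say, the pair $(g,\,3g-3+n)$ then terminates at the base cases $\oM_{0,3}$ and the classical genus-$0$ intersections, so the relations compute every $\psi$-intersection number.

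The main obstacle is bookkeeping rather than a single hard estimate: one has to verify that no step ever uses a relation valid only on $\MMM_{g,n}$ --- guaranteed by the restriction to the $x=0$ integer-primary-field regime --- and, more delicately, that after each pushforward $\pi^{d}_*$ the contributions are correctly partitioned into interior monomials (to be reduced further) and honest boundary classes (to be retained), since the pushforward of a boundary divisor need \emph{not} be a boundary class. The remaining point to check is well-foundedness of the recursion for intersection numbers, namely that every boundary term genuinely lowers the inductive parameter so that the procedure halts.
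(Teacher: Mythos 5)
Your proposal is correct and follows essentially the same route as the paper: both rest on the observation that the vanishing arguments use only the $x=0$, integer-primary-field relations (hence hold explicitly in the Chow ring of $\oM_{g,n}$, turning each vanishing statement into a boundary formula), and both then compute $\psi$-intersection numbers by applying the boundary formula to the top monomial and reducing via the projection formula along the gluing maps, with a well-founded induction (the paper inducts simply on $\dim\oM_{g,n}=3g-3+n$ with base case $\oM_{0,3}$, which also settles your final concern about termination, since every boundary factor, including the one from $\delta_{\mathrm{irr}}$, has strictly smaller dimension).
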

\begin{remark}
The first part of the statement is very similar to \cite[Theorem 5]{CladerGrushevskyJandaZakharov2016}, which gave a reduction algorithm based on Pixton's double ramification cycle. It confirms an expectation on \cite[Page 7]{BJP15}, that ``(...)Pixton's relations are expected to uniquely determine the descendent theory, but the implication is not yet proven.''
\end{remark}
Note that the intersection numbers in \( \psi \)- and \( \kappa \)-classes can be expressed as intersection numbers of only \( \psi \)-classes by pulling back along forgetful maps, see \cite[Corollary 3.23]{Zvonkine2012}. By the proposition, all these intersection numbers can then be computed using the PPZ relations.
\begin{proof}
The first sentence follows by the comment above the proposition. For the second sentence, we will reduce polynomials in \( \psi \)-classes to smaller and smaller boundary strata using our explicit relation. This will be done in the form of an induction on \( \dim \oM_{g,n} = 3g-3+n\), the zero-dimensional case \( \oM_{0,3} \) being obvious.\par
For any \( g_1 + g_2 = g\) and \( I_1 \sqcup I_2 = \{ 1, \dotsc, n\} \) such that \( 2g_i + |I_i| -1 > 0\), write \( \rho_{I_1,I_2}^{g_1,g_2} \colon \oM_{g_1,|I_1|+1} \times \oM_{g_2,|I_2|+1}  \to \oM_{g,n} \) for the attaching map, and \( D_{I_1,I_2}^{g_1,g_2} \) for the divisor \( (\rho_{I_1,I_2}^{g_1,g_2})_*(1) \). Similarly, write \( \sigma : \oM_{g-1,n+2} \to \oM_{g,n} \) for the glueing map, and \( \delta_\textrm{irr} \) for \( \sigma_* (1)\). Then these divisors together form the entire boundary of \( \oM_{g,n} \), and \( \rho^* (\psi_i ) = \psi_i \) and \( \sigma^* (\psi_i ) = \psi_i \) for any choice of indices.\par
Now let \( g\) and \( n\) be such that \( 3g-3+n > 0\), and choose a polynomial \( p (\psi ) \in R^{3g-3+n}(\oM_{g,n})\). Using stability, \( 3g-3+n > g-1\), so by lemma~\ref{lem:nopsiing}, this class is zero on \( \MMM_{g,n} \). Since the proof only uses relations without \( \kappa \)-classes, it can be given explicitly as a sum of the boundary divisors given above multiplied with other \( \psi \)-polynomials. By the projection formula,
\begin{align}
\int_{\oM_{g,n}} \prod_{i=1}^n \psi_i^{d_i} D_{I_1,I_2}^{g_1,g_2} (\psi')^{d'} (\psi'')^{d''} &= \int_{\oM_{g_1,|I_1|+1}} \!\!\! \Big( \prod_{i \in I_1} \psi_i^{d_i}\Big) \psi_{n+1}^{d'} \cdot \int_{\oM_{g_2,|I_2|+1}} \!\!\! \Big( \prod_{i \in I_2} \psi_i^{d_i} \Big) \psi_{n+2}^{d''};\\
\int_{\oM_{g,n}} \prod_{i=1}^n \psi_i^{d_i} \delta_\textrm{irr} (\psi')^{d'} (\psi'')^{d''} &= \int_{\oM_{g-1,n+2}} \!\!\! \Big( \prod_{i =1}^n \psi_i^{d_i}\Big) \psi_{n+1}^{d'} \psi_{n+2}^{d''},
\end{align}
where \( \psi'\) and \( \psi'' \) are the classes on the half-edges of the unique edge in the dual graphs of the divisors.\par
All spaces on the right-hand side have a strictly lower dimension, so by induction we can compute those numbers via the PPZ relations.
\end{proof}

According to \cite[Subsection 3.5]{CladerGrushevskyJandaZakharov2016}, proposition~\ref{prop:classalgbound} implies the following theorem.

\begin{corollary}[Theorem \( \star \) \cite{GrVa05}, improved in \cite{FaPa05}]\label{cor:thmstar}
Any codimension \( d\) tautological class can be expressed in terms of tautological classes supported on curves with at least \( d - g +1 \) rational components.
\end{corollary}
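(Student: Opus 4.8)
The plan is to deduce the statement from proposition~\ref{prop:classalgbound} by a single induction on $\dim \oM_{g,n} = 3g-3+n$. I would let $S(g,n)$ denote the assertion that every tautological class $\alpha \in R^d(\oM_{g,n})$ is a linear combination of decorated strata whose dual graphs carry at least $d-g+1$ genus-$0$ vertices (rational components). When $d \leq g-1$ the quantity $d-g+1$ is non-positive, so $S(g,n)$ holds vacuously; this disposes of the range not already covered by theorem~\ref{thm:highertautringzero}. When $g=0$ every irreducible component is rational and a codimension-$d$ stratum of $\oM_{0,m}$ is a tree with exactly $d+1 = d-0+1$ vertices, so $S(0,m)$ holds; together with the base case $\oM_{0,3}$ this starts the induction.

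For the inductive step I would fix $g\geq 1$ and $d\geq g$, assuming $S$ for all moduli spaces of strictly smaller dimension. By proposition~\ref{prop:classalgbound}, since $d\geq g$ the class $\alpha$ admits an explicit boundary formula: it is a sum of terms $\sigma_*(\beta_{\mathrm{irr}})$ and $(\rho^{g_1,g_2}_{I_1,I_2})_*(\beta_1 \boxtimes \beta_2)$, where $\beta_{\mathrm{irr}} \in R^{d-1}(\oM_{g-1,n+2})$ and $\beta_1 \boxtimes \beta_2 \in R^{d-1}(\oM_{g_1,|I_1|+1}\times \oM_{g_2,|I_2|+1})$ (decorated by $\psi$-classes at the glued points, so still tautological). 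Each source has dimension $3g-4+n$, and by Künneth each factor of a separating term has dimension strictly below $3g-3+n$, so the induction hypothesis applies to $\beta_{\mathrm{irr}}$, $\beta_1$, and $\beta_2$.

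The key computation is to track the number $r(\cdot)$ of genus-$0$ vertices. The gluing maps only add an edge to the dual graph and never alter the genus of a vertex, so $r$ is additive: $\sigma_*$ preserves $r$, while $\rho_*$ sends a product of strata to a stratum with $r=r_1+r_2$. For the non-separating term, the count required by $S(g-1,n+2)$ is $(d-1)-(g-1)+1 = d-g+1$, exactly the target, so $\beta_{\mathrm{irr}}$ already contributes enough. For a separating term set $d_i := \deg\beta_i$, so that $d_1+d_2=d-1$ and $g_1+g_2=g$; the induction hypothesis gives $r_i \geq \max(0,\, d_i-g_i+1)$, and since $\max(0,x)\geq x$ and $(d_1-g_1+1)+(d_2-g_2+1) = (d-1)-g+2 = d-g+1$, we obtain $r_1+r_2 \geq d-g+1$ with no further case analysis. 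When $g_1=0$ the rational tail simply contributes its $d_1+1\geq 1$ genus-$0$ components outright. This yields $S(g,n)$, hence the corollary.

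The genuine content sits entirely in proposition~\ref{prop:classalgbound}; the remaining deduction is a formal bookkeeping of genus-$0$ vertices, and the only point demanding care is the interaction between the non-separating divisor $\delta_{\mathrm{irr}}$ and the rational-component count. The subtlety is that $\delta_{\mathrm{irr}}$ creates no new rational component and lowers the geometric genus by one, which at first looks like a loss; it is reconciled by the observation that the target $d-g+1$ is invariant under the substitution $(g,d)\mapsto(g-1,d-1)$ that governs the passage to $\oM_{g-1,n+2}$, so nothing is lost. Beyond this, I would take care to confirm that the explicit boundary formula of proposition~\ref{prop:classalgbound} presents every term as a $\sigma$- or $\rho$-pushforward of a tautological class of codimension $d-1$ (so that the induction hypothesis genuinely applies) and that the range conditions $2g_i+|I_i|-1>0$ keep all sources stable, ensuring the dimension induction is well-founded.
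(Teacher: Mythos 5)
Your argument is correct, and it is worth pointing out that the paper itself never carries out this deduction: the corollary is justified by a one-line citation, ``according to \cite[Subsection 3.5]{CladerGrushevskyJandaZakharov2016}, proposition~\ref{prop:classalgbound} implies the following theorem.'' What you have written is in effect a self-contained reconstruction of that outsourced argument: an induction on $\dim\oM_{g,n}$ in which each application of the boundary formula of proposition~\ref{prop:classalgbound} converts a codimension-$d$ class into pushforwards of codimension-$(d-1)$ tautological classes under the gluing maps $\sigma$ and $\rho$, while the number of genus-$0$ vertices is tracked through the gluing. Your two bookkeeping observations are exactly the crux and are both right: the target $d-g+1$ is invariant under $(g,d)\mapsto(g-1,d-1)$, which handles $\delta_{\mathrm{irr}}$, and for separating divisors $\max(0,d_1-g_1+1)+\max(0,d_2-g_2+1)\geq (d_1-g_1+1)+(d_2-g_2+1)=d-g+1$ since $d_1+d_2=d-1$ and $g_1+g_2=g$, so no case analysis is needed. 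What the paper's citation buys is brevity; what your version buys is that the whole chain from the PPZ relations at $r=\tfrac12$ to Theorem $\star$ is now visible inside one document. The single spot you should tighten is the genus-$0$ base case: the fact that a codimension-$d$ \emph{stratum} of $\oM_{0,m}$ is a tree with $d+1$ vertices does not by itself give $S(0,m)$, since a codimension-$d$ tautological class may a priori live on a smaller-codimension stratum decorated by $\psi$- and $\kappa$-classes. You need either the classical fact that the Chow ring of $\oM_{0,m}$ is spanned by undecorated boundary strata classes (Keel), or, more in the spirit of your proof, simply to run the same induction in genus $0$ as well, using $R^{\geq 1}(\MMM_{0,m})=0$ (the $g=0$ case of lemma~\ref{lem:nopsiing}) for $d\geq 1$ and noting that $d=0$ is trivial because the target $d-g+1=1$ is met by the open stratum itself. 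With that patch the proof is complete.
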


\section{Dimensional bound for \texorpdfstring{\( R^{\leq g-2}(\mc{M}_{g,n})\)}{low tautological ring}}\label{sec:bounddimRlow}

Similarly to \cite[theorem 6]{PPZ16}, our method also gives a bound for the dimension of the lower degree tautological classes. For the statement of this proposition, recall that \( p(n) \) denotes the number of partitions of \( n\), and \( p(n,k)\) denotes the number of partitions of \( n \) of length at most \( k\).
\begin{proposition}
\begin{equation}
\dim R^d (\mc{M}_{g,n}) \leq \sum_{k=0}^d  \binom{n+k-1}{k} p(d-k,g-1-d)
\end{equation}
\end{proposition}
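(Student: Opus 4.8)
The plan is to follow the reduction scheme of proposition~\ref{prop:psispanRg-1} and lemma~\ref{lem:nokappainhighd}, but to stop eliminating $\kappa$-classes at the point where the relevant relations cease to exist. Recall that $R^d(\mc{M}_{g,n})$ is spanned by the monomials $\psi_1^{d_1}\cdots\psi_n^{d_n}\kappa_{e_1,\dots,e_m}$ with $d_i\ge 0$, $e_j\ge 1$ and $\sum_i d_i+\sum_j e_j=d$. Grouping these by the total $\psi$-degree $k=\sum_i d_i$, there are $\binom{n+k-1}{k}$ monomials in the $\psi$-classes of degree $k$, while the $\kappa$-factor, which depends only on the multiset of its indices, runs over partitions of $d-k$. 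Thus the stated bound is \emph{exactly} the number of such monomials in which the $\kappa$-factor has at most $g-1-d$ parts. It therefore suffices (for $g\ge 2$, $n\ge 1$ and $0\le d\le g-2$) to establish the reduction claim: every spanning monomial whose $\kappa$-factor has $m\ge g-d$ parts lies in the span of monomials whose $\kappa$-factor has strictly fewer parts.

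To prove the claim I would use the relation $\Omega^{d+m}_{g,n+m}(a_1,\dots,a_n,b_1,\dots,b_m)$ of subsection~\ref{subsec:simprelI}, which exists precisely when $d+m\ge g$, i.e.\ when $m\ge g-d$, and which requires non-negative integer primary fields summing to $g-1+d+m$. Pushing it forward along $\pi^{(m)}$ and restricting to $\mc{M}_{g,n}$, the terms carrying a genuine $m$-index $\kappa$-class (those in which each extra leaf contributes $\psi^{e_j'+1}$ with $e_j'\ge 1$) produce, exactly as in equation~\eqref{eq:firstgraph2} with $g-1$ replaced by $d$, the sub-relation
\begin{equation}
\sum_{\substack{\sum_i d_i'+\sum_j e_j'=d\\ 0\le d_i'\le a_i,\ 1\le e_j'\le b_j-1}} \Big(\prod_{i=1}^n Q_{d_i'}(a_i)\Big)\Big(\prod_{j=1}^m Q_{e_j'+1}(b_j)\Big)\prod_{i=1}^n\psi_i^{d_i'}\,\kappa_{e_1',\dots,e_m'}=(\ast),
\end{equation}
where $(\ast)$ is a combination of monomials whose $\kappa$-factor has at most $m-1$ indices (the contributions of the extra leaves carrying only $\psi^1$ and of the rational-tails boundary strata). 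Fixing the target monomial $M=\psi_1^{d_1}\cdots\psi_n^{d_n}\kappa_{e_1,\dots,e_m}$, I would specialise $a_i=d_i$ for $i=2,\dots,n$, $b_j=e_j+1$, and $a_1=d_1+g-1$. The constraints $d_i'\le a_i$ and $e_j'\le b_j-1$ then force $d_1'\ge d_1$, with equality only for $M$ itself; and the coefficient of $M$, namely $Q_{d_1}(d_1+g-1)\prod_{i\ge 2}Q_{d_i}(d_i)\prod_j Q_{e_j+1}(e_j+1)$, is nonzero for $g\ge 2$ because the roots $-\tfrac12,0,\tfrac12,\dots,m-1$ of $Q_m$ all lie strictly below the evaluation points. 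Hence the specialised sub-relation expresses $M$ as a linear combination of $m$-index monomials with strictly larger exponent of $\psi_1$ together with monomials having at most $m-1$ $\kappa$-indices.

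A downward induction on the exponent of $\psi_1$ (whose largest value for an $m$-index degree-$d$ monomial is $d-m$, the base case being vacuous) removes the higher-$\psi_1$ $m$-index terms and so proves the claim; a subsequent downward induction on $m$, run from $m=d$ down to $m=g-d$, reduces every spanning monomial to one whose $\kappa$-factor has at most $g-1-d$ parts, and counting the survivors as in the first paragraph yields the bound. The main obstacle is bookkeeping rather than conceptual: one must confirm that after the pushforward to $\mc{M}_{g,n}$ and the chosen specialisation, every term other than $M$ carries either strictly fewer $\kappa$-indices or a strictly higher power of $\psi_1$, so that the two nested inductions close. This is the direct analogue of the stratum-by-stratum analysis already carried out in lemmata~\ref{lem:tautooffboundary} and~\ref{lem:generalformula}, and it uses no relation beyond those of subsection~\ref{subsec:simprelI}.
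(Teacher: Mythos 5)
Your proposal is correct and is essentially the paper's own argument: the identical counting of $\psi$-$\kappa$-monomials whose $\kappa$-factor has at most $g-1-d$ parts, the identical reduction claim for classes with $m \geq g-d$ $\kappa$-indices via the specialization $a_i = d_i$, $b_j = e_j+1$ with the excess placed on $a_1$, and the identical double induction on the exponent of $\psi_1$ and then on the number of $\kappa$-indices. The only (immaterial) divergence is that you push forward the degree-$(d+m)$ relation $\Omega^{d+m}_{g,n+m}$ directly, in the style of the proof of proposition~\ref{prop:psispanRg-1}, whereas the paper pushes forward $\sigma\cdot\Omega^{g}_{g,n+m}$ with $\sigma$ an auxiliary $\psi$-monomial of degree $d-g+m$, in the style of lemma~\ref{lem:nokappainhighd}; both devices exist precisely when $m \geq g-d$ and yield the same triangular system of relations.
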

\begin{remark}
If we use the natural interpretation of \( \binom{k-1}{k} \) as \( \delta_{k,0} \), this does indeed recover \cite[theorem 6]{PPZ16} in the case \( n =0 \).
\end{remark}
\begin{proof}
We will exhibit an explicit spanning set of this cardinality, consisting of \( \psi \)-\( \kappa \)-classes: monomials in \( \psi \)-classes multiplied with a multi-index \( \kappa \)-class.\par
First, a less strict first bound can be obtained as follows: any \( \psi \)-\( \kappa \)-class has a definite degree in \( \psi \)'s, say \( k\). There are \( \binom{n+k-1}{k} \) different monomials of degree \( k\) in \( n\) variables, and furthermore there are as many different multi-index \( \kappa \)-classes of degree \( d-k \) as there are partitions of \( d-k \), so \( p(d-k) \). This gives the first bound
\begin{equation}
\dim R^d (\mc{M}_{g,n}) \leq \sum_{k=0}^d  \binom{n+k-1}{k} p(d-k),
\end{equation}
which is already close to the statement of the proposition.\par
To get the actual bound, we will show that any \( \psi \)-\( \kappa \)-class with at least \( g-d \) \( \kappa \)-indices can be expressed in \( \psi \)-\( \kappa \)-classes with strictly fewer \( \kappa \)-indices. Following the logic of the previous paragraph, this proves the bound.\par
This reduction step is analogous to the proof of lemma~\ref{lem:nokappainhighd}. Suppose we have a class \( \mu = \psi_1^{d_1} \dotsb \psi_n^{d_n} \kappa_{e_1, \dotsb e_m} \) with \( m \geq g-d\). Choose non-negative integers \( \{ f_i, a_i\}_{i=1}^{n+m} \) such that the following hold:
\begin{align}
f_1 &= 0;&&\\
  \sum_{i=1}^{n+m} f_i &= d-g+m; &&\\
a_i + f_i &= d_i, &\text{for } &2 \leq i \leq n;\\
a_{n+j} + f_{n+j} &= e_j +1, &\text{for } &1 \leq j \leq m;\\
 a_1 &= 2g-1 - \sum_{j=2}^m a_j.&&
\end{align}
Let \( \sigma = \prod_{i=2}^{n+m} \psi_{i}^{f_i} \), and consider the class
\begin{equation}
\pi_*^m \big( \sigma \cdot \Omega_{g,n+m}^g(a_1, \dotsc, a_{n+m}) \big) \bigg|_{\mc{M}_{g,n}}.
\end{equation}
By the second condition on our chosen numbers, which fixes the degree of \( \sigma \), this expression gives a relation in \( R^d (\mc{M}_{g,n})\).\par
There are no \( \psi \)-\( \kappa \)-classes with more than \( m\) \( \kappa \)-indices in this relation, and the coefficient of any \( \psi\)-\( \kappa \)-class with exactly \( m\) indices can only come from the open part of \( \overline{\mc{M}}_{g,n+m} \), as each forgotten point must carry at least two \( \psi \)-classes, which would give too high degrees on any rational component. Therefore, the coefficient of \( \psi_1^{p_1} \dotsb \psi_n^{p_n} \kappa_{q_1, \dotsc, q_m} \) must be \( \prod_{i=1}^n Q_{p_i-f_i}(a_i) \cdot \prod_{j=1}^m Q_{q_j-f_{n+j}+1}(a_{n+j}) \). This is only non-zero if \( p_i \leq f_i + a_i = d_i \) for all \( i \neq 1 \) and \( q_j \leq f_{n+j}+ a_n+j-1 = e_j \) for all \( j \). This implies that \( p_1 \geq d_1 \), with equality only if \( p_i = d_i \) and \( q_j = e_j \) for all \( i, j\). Hence, this relation expresses the class \( \mu \) as a linear combination of \( \psi \)-\( \kappa \)-classes with less than \( m\) \( \kappa \)-indices and \( \psi \)-\( \kappa \)-classes with strictly higher exponent of \( \psi_1 \). By induction on first the exponent of \( \psi_1 \) and then the number of \( \kappa \)-indices, all these classes can be reduced.
\end{proof}
\begin{remark}
This argument breaks down for \( m < g-d\), as the class \( \sigma \) would have to have a negative degree: our class only vanishes in degree at least \( g\), and to get at most \( m\)-index \( \kappa \)-classes, we can only push forward \( m\) times, so the lowest degree relation would be in \( R^{g-m} \).
\end{remark}
The condition that partitions have length at most \( g - 1 - d\) seems dual to Graber and Vakil's Theorem \( \star \), corollary~\ref{cor:thmstar}, see \cite[theorem 1.1]{GrVa05}.

\bibliographystyle{alpha}
\bibliography{tautologicalrelationsonMgn}

\end{document}